\renewcommand{\leq}{\leqslant}
\renewcommand{\geq}{\geqslant}
\DeclareMathOperator{\Fix}{Fix}
\DeclareMathOperator{\id}{id}
\DeclareMathOperator{\Ret}{Ret}
\DeclareMathOperator{\Soc}{Soc}
\DeclareMathOperator{\ret}{ret}
\DeclareMathOperator{\mpl}{mpl}
\newcommand{\Z}{\mathbb{Z}}
\newcommand{\Zen}{\mathcal{Z}}
\newcommand{\Aut}{\operatorname{Aut}}
\newcommand{\Hol}{\operatorname{Hol}}
\newcommand{\Sym}{\mathfrak{S}}
\newcommand{\Endl}{\mathrm{End}_{\lambda}}
\newcommand{\Orb}{\mathcal{O}}
\newcommand{\G}{\mathcal{G}}
\newcommand{\gen}[1]{\left\langle #1 \right\rangle}
\newcommand{\genrel}[2]{\left\langle \ #1 \ \vline \ #2 \ \right\rangle}
\newcommand{\cab}[1]{\underset{#1}{\ast}}
\newcommand{\cabsim}[1]{\underset{#1}{\sim}}
\numberwithin{equation}{section}
\numberwithin{figure}{section}
\numberwithin{table}{section}
\newtheorem{thm}{Theorem}[section]
\newtheorem*{thm*}{Theorem}
\newtheorem{lem}[thm]{Lemma}
\newtheorem{cor}[thm]{Corollary}
\newtheorem{pro}[thm]{Proposition}
\theoremstyle{definition}
\newtheorem{defn}[thm]{Definition}
\newtheorem*{defn*}{Definition}
\newtheorem{problem}[thm]{Problem}
\newtheorem*{convention*}{Convention}
\newtheorem{rem}[thm]{Remark}
\newtheorem{exa}[thm]{Example}
\title[Endocabling of solutions / Solutions whose diagonal is a cyclic permutation]{Endocabling of involutive solutions to the Yang-Baxter equation, with an application to solutions whose diagonal is a cyclic permutation}
\author{Carsten Dietzel}
\date{\today}
\address[Carsten Dietzel]{Normandie Univ, UNICAEN, CNRS, LMNO, 14000 Caen, France}
\email{carsten.dietzel@unicaen.fr}
\begin{document}

\begin{abstract}
In this article, we introduce endocabling as a technique to deform involutive, non-degenerate set-theoretic solutions to the Yang-Baxter equation (``solutions'', for short) by means of $\lambda$-endomorphisms of their associated permutation brace, thus generalizing the cabling method by Lebed, Vendramin and Ram\'{i}rez. In the first part of the article, we define endocabling and investigate the behaviour of solutions and their invariants under endocabling. In the second part, we apply our findings to solutions of size $n$ whose diagonal map is an $n$-cycle: we will prove that solutions with this property whose size is an odd prime power, are of finite multipermutation level. Furthermore, solutions with this property whose size is a power of $2$, will be proven either to be of finite multipermutation level or to admit an iterated retraction onto a unique solution of size $4$.

We formulate our results in the language of cycle sets.
\end{abstract}

\maketitle

\section*{Introduction}

This work is concerned with the \emph{Yang--Baxter equation}, a special case of the \emph{quantum Yang--Baxter equation} (\emph{QYBE}, for short) that has been discovered by Rodney James Baxter \cite{Baxter_YB} and Chen-Ning Yang \cite{Yang_YB} in the context of integrable systems. In its parameter-independent and braided form, a solution to the QYBE is given by a finite-dimensional vector space $V$, typically over the field of complex numbers, together with an invertible linear map $R: V^{\otimes 2} \to V^{\otimes 2}$ that satisfies the braiding condition
\begin{equation} \label{eq:QYBE}
    (\id_V \otimes R)(R \otimes \id_V)(\id_V \otimes R) = (R \otimes \id_V)(\id_V \otimes R)(R \otimes \id_V). \tag{QYBE}
\end{equation}
Historically, the theory of \cref{eq:QYBE} grew with the theory of quantized algebras and has been investigated mainly in the the context of mathematical physics. However, in the 80's, the observation of Turaev \cite{Turaev_links} that its braided nature can be exploited in order to provide novel knot invariants drew the attention of mathematicians outside mathematical physics to \cref{eq:QYBE}, as it turned out that it opened up the possibility of constructing generalizations of knot invariants such as the Jones polynomial \cite{jones_polynomial}. A main difficulty, however, was the fact that at that time, most known solutions of \cref{eq:QYBE} came from quantum groups resp. their representations. Furthermore, as most quantum groups occurred as deformations of ``classical'' commutative or cocommutative Hopf algebras, these solutions often were parametrical deformations of the trivial \emph{flip} solution $R(v \otimes w) = w \otimes v$.

In the 90's, Drinfeld \cite{Drinfeld_Problems} addressed this problem by motivating the study of \emph{set-theoretic} solutions to the Yang--Baxter equation which are defined as those solutions that permute the basis vectors $e_i \otimes e_j$ of $V^{\otimes 2}$ for some basis $(e_i)$ of $V$. These solutions could then be subjected to a deformation process in order to construct novel parametrized families of solutions. For set-theoretic solutions defined by racks, a systematic approach has, for example, been developed by Eisermann \cite{Eisermann_deformations}.

The set-theoretic counterpart to a solution to \cref{eq:QYBE} is given by a set $X$, together with a bijective map $r: X^2 \to X^2$ that satisfies the \emph{set-theoretic} Yang--Baxter equation
\begin{equation} \label{eq:YBE}
    (\id_X \times r)(r \times \id_X)(\id_X \times r)=(r \times \id_X)(\id_X \times r)(r \times \id_X) \tag{YBE}.
\end{equation}
An important feature of \cref{eq:YBE} is that special classes of its solutions can be effciently studied by means of algebraic tools, predominantly finite and discrete group theory. This discovery is one of the main achievements of the landmark articles of Etingof, Schedler and Soloviev \cite{ESS_YangBaxter}, Gateva-Ivanova and Van den Bergh \cite{GIVdB_IType} and Lu, Yan and Zhu \cite{LYZ_YangBaxter} and initiated the algebraic theory of the Yang--Baxter equation.

Writing a solution as $r(x,y) = (\lambda_x(y),\rho_y(x))$, it is usual to investigate solutions that are \emph{non-degenerate}, meaning that the maps $\lambda_x,\rho_x$ are bijective for all $x \in X$. Furthermore, solutions are often demanded to be \emph{involutive}, meaning that $r^2 = \id_{X^2}$. In this case, one can define a \emph{structure group}
\[
G(X,r) = \genrel{e_x, \ x\in X}{e_xe_y = e_we_z \textnormal{ if } r(x,y) = r(w,z)}_{\mathrm{gr}}
\]
and a \emph{permutation group}
\[
\G(X,r) = \gen{\lambda_x : x \in X} \leq \Sym_X
\]
which can be shown to be an epimorphic image of $G(X,r)$. These groups can be equipped with a bijective $1$-cocycle to a respective abelian group, a fact that has been conceptualized by Rump \cite{Rump_Decomposition, Rump_braces} by the notion of \emph{brace} (see \cref{sec:preliminaries} for details). This established brace theory as one of the main tools to investigate involutive, non-degenerate solutions.

Due to work by Guarnieri and Vendramin \cite{Guarnieri_Vendramin}, there now exists a brace-theoretic framework for non-involutive solutions, which relies on the notion of \emph{skew left brace} and includes Rump's braces under the notion of \emph{skew left brace of abelian type}. Furthermore, several efforts have been made to set up an algebraic framework for solutions that are not non-degenerate \cite{left_non_degenerate} or not even bijective \cite{finite_idempotent,stanovsky_idempotent}.

Coming back to finite, involutive, non-degenerate solutions (which we from now on simply refer to as \emph{solutions}), an important invariant is their \emph{diagonal} which is defined as the map $T: X \to X$; $x \mapsto \lambda_x^{-1}(x)$. This map can be shown to be bijective and captures several properties of $X$, in particular such that are related to decomposability, as has first been observed by Gateva-Ivanova. Here, a solution is called \emph{decomposable} if there is a decomposition $X = X_1 \sqcup X_2$ with $X_i \neq \emptyset$ ($i=1,2$) such that $r(X_i^2) = X_i^2$ ($i=1,2$). Below, we list some results of this kind:

\begin{enumerate}
    \item If $T = \id_X$, then $X$ is decomposable \cite{Rump_Decomposition}.
    \item If $T$ is an $n$-cycle, where $n = |X|$, then $X$ is indecomposable \cite{Vendramin_Ramirez_Decomposition}.
    \item If $T$ is an $n-1$-cycle, where $n = |X|$, then $X$ is decomposable \cite{Vendramin_Ramirez_Decomposition}.
    \item If the order $o(T)$ is coprime to $|X|$, then $X$ is decomposable \cite{camp_mora_sastriques}.
    \item Let $X$ be indecomposable and $|X| = pq$ where $p < q$ are prime, then either the length of each cycle of $T$ is disivible by $q$ or $T$ contains a cycle of length divisible by $p$ \cite{LRV_Cabling}. 
\end{enumerate}

However, the diagonal (resp. its type) of a solution is an invariant that is hard to control in order to relate it to properties of a solution. An important method that allows some leverage in this aspect is the \emph{cabling} method which has been developed by Lebed, Ram\'{i}rez and Vendramin \cite{LRV_Cabling} as a far-reaching generalization of ideas of Dehornoy in the context of his RC-calculus of solutions \cite{Dehornoy_RC}. Together with Rump's theorem on the decomposability of squarefree solutions \cite{Rump_Decomposition}, cabling is a strong tool for the investigation of decomposability of solutions with a given diagonal structure.

The main idea behind the cabling method is to deform a given solution $(X,r)$ with $r(x,y) = (\lambda_x(y),\rho_y(x))$ to a solution $(X,r^{(k)})$ for some fixed positive integer $k$. The $\lambda$-map of the thus deformed solution is given by
\[
\lambda^{(k)}_x(y) = \lambda_{k \cdot \lambda_x}(y) = (\lambda_x  \circ \lambda_{T(x)} \circ \ldots \circ \lambda_{T^{k-1}(x)} )(y).
\]
where $k \cdot \lambda_x$ is taken in the \emph{permutation brace} on $\G(X,r)$ (see \cref{sec:preliminaries} for details).

This method leads to easy proofs of the above-mentioned decomposability results of Ram\'{i}rez and Vendramin, and Camp-Mora and Sastriques by means of a reduction to Rump's decomposition theorem.

From then, the cabling method has been the basis for several new discoveries concerning set-theoretic solutions and has been - explicitly or implicitly - applied in \cite{Feingesicht,CO_SquarefreeIndecomposable}, for example. Very recently, in the work of Colazzo and Van Antwerpen \cite{colazzo_van_antwerpen}, a non-involutive generalization of cabling has been introduced in order to investigate decomposability of solutions in a non-involutive setting.

In this work, we show that a more general notion of cabling can be obtained by deforming solutions via \emph{$\lambda$-endomorphisms}, that is, homomorphisms $\varphi: \G(X,r)^+ \to \G(X,r)^+$ with respect to the brace structure such that $\lambda_g \circ \varphi = \varphi \circ \lambda_g$ for all $g \in \G(X,r)$. Given a $\lambda$-endomorphism $\varphi$, we show that $\lambda^{(\varphi)}_x(y) = \lambda_{\varphi(\lambda_x)}(y)$ is the $\lambda$-map of an involutive solution.

Furthermore, we prove that the diagonal maps are well-behaved under addition of $\lambda$-endo\-mor\-phisms. More precisely, we show that $T_{\varphi + \psi} = T_{\varphi} \circ T_{\psi}$ whenever $\varphi, \psi$ are two $\lambda$-endomorphisms of $\G(X,r)$ for a solution $X$. Therefore, one of the main features of endocabling, which is control over the diagonal, is also characteristic for the endocabling method. Furthermore, the fact that all diagonals obtained from an endocabling of the same solution commute places a lot of restrictions on the obtainable diagonals when $T$ has a small centralizer. This is, for example, the case, if $T$ is a full $n$-cycle where $n = |X|$.

We first investigate the notion of (weak) $\lambda$-endomorphisms of braces and describe the endocabling of a solution $X$ by (weak) $\lambda$-endomorphisms of the permutation brace $\G(X,r)$. Furthermore, we investigate the behaviour of the diagonal under sums of $\lambda$-endomorphisms and particular endocablings defined by central elements of $\G(X,r)$ (\cref{sec:endocabling}). We then briefly investigate irreducible solutions, that is, solutions without proper sub-solutions, in order to provide a framework for the investigation of solutions of size $n$ whose diagonal is an $n$-cycle (\cref{sec:irreducibility}). These solutions will then be investigated in the special case when $n = p^v$ where $p$ is prime: we prove that if $p$ is odd, then each such solution is of finite multipermutation level (see \cref{sec:preliminaries} for a definition) which will result from an analysis of the center of the permutation group via endocabling (\cref{sec:full_cycle_odd_prime_power}). Furthermore, we will show that each solution of size $n=2^v$ whose diagonal is an $n$-cycle is either of finite multipermutation level or its iterated retractions become stationary on a unique solution of size $4$. In the inductive proof, we will use more advanced properties of cablings that involve, for example, that central elements can be eliminated by means of endocablings (\cref{sec:full_cycle_power_of_two}).

We formulate our results in the language of cycle sets.

\section{Preliminaries} \label{sec:preliminaries}

\subsection{Group-theoretic notation}~

In this subsection, we list some group-theoretic notations and conventions that will be used throughout this work. Suppose, for now, that $G = (G,\circ)$ is a group with identity $e = e_G$.

\begin{enumerate}
    \item We write $H \leq G$ if $H$ is a subgroup of $G$. For a subset $S \subseteq G$, we denote the subgroup generated by $S$ by
    \[
    \gen{S}_{\circ} = \bigcap_{\substack{H \leq G, \\ S \subseteq H}} H.
    \]
    If the operation $\circ$ is clear from the context, we drop the subscript-$\circ$.
    \item The \emph{center} of $G$ is denoted by $\Zen(G) = \{ z \in G \ : \ \forall g \in G: z \circ g = g \circ z \}$. Furthermore, the \emph{centralizer} of a subset $S \subseteq G$ is denoted by $C_G(S) = \{g \in G \ : \ \forall s \in S: g \circ s = s \circ g \}$.
    \item The \emph{order} of an element $g \in G$ is denoted by $o_{\circ}(g) = |\gen{g}_{\circ}|$. The \emph{exponent} of $G$ is denoted by
    \[
    \exp_{\circ}(G) = \begin{cases}
        \min \{ n \geq 1 \ : \ \forall g \in G: g^n = e \} & \exists n \geq 1: \forall g \in G: g^n = e, \\
        \infty & \textnormal{else}.
    \end{cases}
    \]
    Alternatively, $\exp_{\circ}(G) = \mathrm{lcm} \{ o_{\circ}(g): g \in G \}$, if it is defined. If the operation $\circ$ is clear from the context, we drop the subscript-$\circ$.
    \item For a set $X$, we denote the \emph{symmetric group} over $X$ by
    \[
    \Sym_X = \{ \pi : X \to X : X \textnormal{ bijective} \}.
    \]
    We suppose that $\Sym_X$ acts \emph{from the left} on $X$. Furthermore, for an integer $n \geq 1$, we write $\Sym_n = \Sym_{\{1, \ldots,n \}}$.
    \item Given a group $G$ that acts from the left on a set $X$ by an operation $G \times X \to X$; $(g,x) \mapsto g \cdot x$, we denote the \emph{stabilizer} of an element $x_0 \in X$ by $G_{x_0} = \{ g \in G: g \cdot x_0 = x_0 \} \leq G$. Furthermore, we denote the \emph{orbit} of $x_0$ under the action of $G$ by $\Orb_G(x_0) = \{ g \cdot x_0 : g \in G \}$. We will use this notation in particular when $G$ is a \emph{permutation group}, that is, a subgroup of $\Sym_X$ for some set $X$.
\end{enumerate}

\medskip

\subsection{Cycle sets}~

We will rely on the language of \emph{cycle sets}, as defined by Rump \cite{Rump_Decomposition}. These are defined as follows:

\begin{defn}[{\cite[§4]{Rump_Decomposition}}]
    A \emph{cycle set} is a set $X$ with a binary operation $(x,y) \mapsto x \ast y$ such that the following axioms are satisfied:
    \begin{align}
        \sigma_x: X \to X ; \ & y \mapsto x \ast y \quad \textnormal{is bijective for all } x \in X, \tag{C1} \label{eq:C1_bijectivity} \\ 
        (x \ast y) \ast (x \ast z) & = (y \ast x) \ast (y \ast z) \quad \forall x,y,z \in X \tag{C2}. \label{eq:C2_cycloid_equation}
    \end{align}
    Furthermore, a cycle set is called \emph{nondegenerate} if
    \begin{equation}
        T: X \to X; x \mapsto x \ast x \quad \textnormal{is bijective}. \tag{C3} \label{eq:C3_nondegeneracy}
    \end{equation}
    The map $T$ is referred to as the \emph{diagonal map} of $X$.
\end{defn}

\emph{Homomorphisms} of cycle sets are defined as mappings that respect the $\ast$-operation. \emph{Isomorphisms} are, as usual, defined as bijective homomorphisms. A \emph{sub-cycle set} of a cycle set $X$ is defined as a subset of $X$ that is a cycle set by restriction of operations.

Note that nondegenerate cycle sets on a set $X$ are in bijective correspondence to nondegenerate, involutive, set-theoretic solutions to the Yang--Baxter equation on the set $X$. We explicitly repeat the correspondence established in \cite[Proposition 1]{Rump_Decomposition}:

Given a nondegenerate cycle set $(X,\ast)$, the associated set-theoretic solution $r: X^2 \to X^2$ is given by
\[
r(x,y) = (\sigma_x^{-1}(y),(\sigma_x^{-1}(y)) \ast x).
\]
On the other hand, given a nondegenerate, involutive, set-theoretic solution $r: X^2 \to X^2$; $(x,y) \mapsto (\lambda_x(y),\rho_y(x))$, the set $X$ becomes a cycle set under the binary operation $x \ast y = \lambda_x^{-1}(y)$.

Due to this transfer principle, we will from now on use the language of cycle sets to talk about set-theoretic solutions.

\begin{convention*}
    When writing \emph{cycle set}, we will always mean: \emph{a finite, non-degenerate cycle set}.
\end{convention*}

Note that by a theorem of Rump \cite[Theorem 2]{Rump_Decomposition}
for a finite cycle set, \eqref{eq:C3_nondegeneracy} follows from \eqref{eq:C1_bijectivity} and \cref{eq:C2_cycloid_equation}.

With each cycle set $X$ comes a group-theoretic invariant, its \emph{permutation group} \cite{Rump_Decomposition} which is defined as the group
\[
\genrel{\sigma_x}{x \in X} \leq \Sym_X.
\]
Given an element $g \in \Sym_X$, we denote its explicit action on the elements of $X$ by $x \mapsto \lambda_g(x)$.

 A cycle set is called \emph{indecomposable} if for each disjoint decomposition $X = X_1 \sqcup X_2$ into sub-cycle sets $X_1,X_2$, we have $X_1 = X$ or $X_2 = X$. Note that a cycle set is indecomposable if and only if $\G(X)$ acts transitively on $X$. Also note that the \emph{decomposition classes} of $X$, that is, the orbits of $X$ under the action of $\G(X)$, are sub-cycle sets by restriction.

The following property of indecomposable cycle set has been discovered by Cedó and Okniński:

\begin{pro} \label{pro:surjections_of_indecomposable_cycle_sets}
    Let $X,Y$ be cycle sets where $Y$ is indecomposable and let $f: Y \twoheadrightarrow X$ be a surjective homomorphism of cycle sets. Then $|f^{-1}(x)| = |f^{-1}(x')|$ for all $x,x' \in X$. In particular, $|X|$ divides $|Y|$.
\end{pro}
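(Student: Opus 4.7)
The plan is to push the transitive action of $\G(Y)$ on $Y$ down to $X$ along $f$ and then observe that any two fibers are swapped by some group element.

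First I would translate the cycle-set homomorphism identity $f(y_1 \ast y_2) = f(y_1) \ast f(y_2)$ into the intertwining relation $f \circ \sigma_y = \sigma_{f(y)} \circ f$ for each $y \in Y$. This already shows two things at once: $\sigma_y$ sends $f$-fibers to $f$-fibers, and the resulting permutation on $X$ depends only on $f(y)$. Since $\G(Y) = \gen{\sigma_y : y \in Y}$, every $g \in \G(Y)$ is fiber-preserving over $f$, and the assignment $\bar g(f(y)) := f(g(y))$ defines a group homomorphism $\pi : \G(Y) \to \G(X)$ satisfying $f \circ g = \pi(g) \circ f$ for all $g \in \G(Y)$.

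Next I would invoke the hypothesis that $Y$ is indecomposable, which is equivalent to $\G(Y)$ acting transitively on $Y$. Given arbitrary $x, x' \in X$, pick preimages $y \in f^{-1}(x)$ and $y' \in f^{-1}(x')$ and choose $g \in \G(Y)$ with $g(y) = y'$. Being a bijection of $Y$ that respects $f$-fibers and sends $y$ into $f^{-1}(x')$, the element $g$ restricts to a bijection $f^{-1}(x) \to f^{-1}(x')$. Hence all fibers of $f$ share a common cardinality $c$, and the disjoint decomposition $Y = \bigsqcup_{x \in X} f^{-1}(x)$ yields $|Y| = c \cdot |X|$, so $|X|$ divides $|Y|$.

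The only step that requires genuine verification is the well-definedness of the induced action in the first paragraph; this follows from the intertwining identity together with the fact that $\G(Y)$ is generated by the $\sigma_y$. Everything afterward is a standard orbit-equivalence argument, and notably no further non-degeneracy or size hypothesis on $X$ or $Y$ is needed.
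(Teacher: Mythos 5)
Your proof is correct. The paper does not prove this statement itself but defers to \cite[Lemma 3.3]{CO_SquarefreeIndecomposable}, whose argument is essentially the one you give: the intertwining relation $f \circ \sigma_y = \sigma_{f(y)} \circ f$ makes the fibers of $f$ a $\G(Y)$-invariant partition (a block system), and transitivity of $\G(Y)$ on $Y$ then carries any fiber bijectively onto any other.
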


\begin{proof}
    See \cite[Lemma 3.3]{CO_SquarefreeIndecomposable}.
\end{proof}

An central property of some cycle sets is their proneness to have repeated permutations. This is usually conceptualized by the notion of retractability. In the following, we collect definitions and results from \cite[§3.2]{ESS_YangBaxter} and \cite{Rump_Decomposition}.

Given a cycle set $X$, one defines the \emph{retraction relation} by $x \sim y \Leftrightarrow \sigma_x = \sigma_y$ which is immediately seen to be an equivalence relation. Accordingly, the equivalence classes are denoted as \emph{retraction classes}. It can be shown \cite[Lemma 2]{Rump_Decomposition} that $\sim$ is a congruence relation. This gives rise to the following notion:

\begin{defn}
    The \emph{retraction} of a cycle set $X$ is the set $X_{\ret} = X/\!\!\sim$, where $x \sim y \Leftrightarrow \sigma_x = \sigma_y$, under the cycle set operation given by $[x] \ast [y] = [x \ast y]$.
\end{defn}

Note that a cycle set is called \emph{retractable} if $\sim$ is a nontrivial equvalence relation which for finite cycle sets is the same as saying that $|X_{\ret}| < |X|$. Otherwise, a cycle set is called \emph{irretractable}.

It often happens that the retraction of a cycle set is itself retractable. Therefore, one recursively defines for a cycle set $X$ the sequence of cycle sets $X^{(n)}$ ($n \geq 0$) by $X^{(0)} = X$ and $X^{(n+1)} = X^{(n)}_{\ret}$. It can happen that $X^{(n)}$ is a cycle set with one single element, therefore giving rise to the notion of \emph{multipermutation level} of a cycle set, which is the quantity defined by
\[
\mpl(X) = \begin{cases}
    \min \{ k \in \Z_{\geq 0}: |X^{(k)}| = 1  \}  & \exists k \in \Z_{\geq 0} :  |X^{(k)}| = 1, \\
    \infty & \textnormal{else}.
\end{cases}
\]

\medskip

\subsection{Brace theory}~ \label{subsec:braces}

Brace theory is an important tool in the study of set-theoretic solutions resp. cycle sets. As in this work, we are only concerned with involutive solutions, we will only make use of \emph{skew left braces of abelian type}. In order to keep things short, we refer to these as \emph{braces}. For most of the definitions and results, we refer to the article of Cedó, Jespers and Okniński \cite{CJO_Braces}. For those that are not mentioned there, we provide another reference or a proof.

\begin{defn}
    A \emph{brace} is a triple $(B,+,\circ)$, where $(B,+)$, $(B,\circ)$ are groups - its \emph{additive} and \emph{multiplicative} group, respectively - such that $(B,+)$ is abelian and the equation
    \begin{equation} \label{eq:brace_equation}
        a \circ (b + c) = (a \circ b) - a + (a \circ c)
    \end{equation}
    is satisfied for all $a,b,c \in B$.
\end{defn}

As is usual, we will refer to a brace $(B,+,\circ)$ by its underlying set $B$. Furthermore, we write $B^+ = (B,+)$ and $B^{\circ} = (B,\circ)$. Note that the multiplicative identity $1$ of $B^{\circ}$ and the additive identity $0$ of $B^+$ coincide, i.e. $0=1$.

A \emph{homomorphism} of braces is defined in the obvious way, i.e. as a map that is a group homomorphism between the respective additive and multiplicative groups. Furthermore, given a brace $B$, a subset $A \subseteq B$ is a \emph{subbrace} of $B$ if $A$ is a subgroup of both $A^+$ and $A^{\circ}$ or, equivalently, if $A$ is a brace by restriction of the operations on $B$.

\begin{convention*}
    When doing calculations in braces, we demand that, as is usual, multiplication binds stronger than addition, that is, one evaluates $a \circ b + c = (a \circ b) + c$.

    Furthermore, we will also use the symbol $\circ$ for the composition of mappings. It will, however, always be clear from the context if we refer to the composition of mappings or to the multiplication in a brace. The only case where these notations collide is the multiplication in the permutation brace $\G(X)$ where this superficial ambiguity does not pose any risk of confusion as the brace multiplication coincides with the composition of mappings.
\end{convention*}

An important feature of braces is that their additive group is a module over the multiplicative group:

\begin{defn}
    For a brace $B$, the \emph{$\lambda$-action} is the map
    \[
    \lambda: B \times B \to  B; \ (g,a) \mapsto \lambda_g(a) = g \circ a - g.
    \]
\end{defn}

It can be shown that in a brace $B$, the identities
\begin{align}
    \lambda_g(a + b) & = \lambda_g(a) + \lambda_g(b), \\
    \lambda_g \circ \lambda_h & = \lambda_{g \circ h}
\end{align}
are satisfied for all $g,h,a,b$, or, stated in other words, the map $B^{\circ} \to \Aut(B^+)$; $g \mapsto \lambda_g$ is a well-defined group homomorphism.

Note that each brace defines a cycle set:

\begin{thm} \label{thm:cycle_set_on_a_brace}
    Let $B$ be a brace, then $B$ is a cycle set under the operation $g \ast h = \lambda_g^{-1}(h)$.
\end{thm}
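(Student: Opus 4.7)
The plan is to verify axioms \eqref{eq:C1_bijectivity} and \eqref{eq:C2_cycloid_equation} directly from the brace axioms; once these hold, non-degeneracy \eqref{eq:C3_nondegeneracy} comes for free in the finite case via Rump's theorem cited just after the definition of cycle set, which is enough under the convention that cycle sets are finite.

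First I would dispatch \eqref{eq:C1_bijectivity}: the map $\sigma_g\colon h\mapsto g\ast h = \lambda_g^{-1}(h)$ is bijective because $\lambda_g$ is an automorphism of $B^+$, as recorded in the paragraph immediately preceding the statement. For \eqref{eq:C2_cycloid_equation} the key observation is the rewriting
\[
g\circ \lambda_g^{-1}(h)=g+\lambda_g\bigl(\lambda_g^{-1}(h)\bigr)=g+h,
\]
which follows from unpacking the definition $\lambda_g(a)=g\circ a - g$. Using this, together with the homomorphism property $\lambda_{g\circ h}=\lambda_g\circ\lambda_h$, I would compute both sides of the cycloid equation as follows. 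Starting from the left-hand side,
\[
(x\ast y)\ast(x\ast z)=\lambda_{\lambda_x^{-1}(y)}^{-1}\bigl(\lambda_x^{-1}(z)\bigr)=\bigl(\lambda_x\circ\lambda_{\lambda_x^{-1}(y)}\bigr)^{-1}(z)=\lambda_{x\circ\lambda_x^{-1}(y)}^{-1}(z)=\lambda_{x+y}^{-1}(z).
\]
By the exact same calculation with the roles of $x$ and $y$ exchanged, the right-hand side equals $\lambda_{y+x}^{-1}(z)$. Since $(B,+)$ is abelian, $x+y=y+x$, and the two sides coincide.

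Finally, for \eqref{eq:C3_nondegeneracy} under the standing convention that braces and cycle sets here are finite, I would just invoke the cited Rump theorem that \eqref{eq:C3_nondegeneracy} is implied by \eqref{eq:C1_bijectivity} and \eqref{eq:C2_cycloid_equation} in the finite setting; alternatively, one can check directly that $T(g)=\lambda_g^{-1}(g)$ is bijective by exhibiting its inverse, but this is not needed for the statement.

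There is no real obstacle here: the only genuine content is the identity $g\circ \lambda_g^{-1}(h)=g+h$, after which the cycloid equation collapses to the commutativity of $B^+$. The proof is therefore essentially a one-line manipulation once this identity is isolated.
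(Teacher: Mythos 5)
Your proof is correct, and the paper itself offers no proof of this statement (it is quoted as a known result from the brace literature), so there is nothing to diverge from: your direct verification is the standard argument. The computation hinges exactly where you say it does, on the identity $g \circ \lambda_g^{-1}(h) = g + h$, after which both sides of \eqref{eq:C2_cycloid_equation} collapse to $\lambda_{x+y}^{-1}(z)$ and $\lambda_{y+x}^{-1}(z)$, which agree by commutativity of $B^+$. One small remark: as stated the theorem does not assume $B$ finite, so if you want to cover that case you should carry out the direct check of \eqref{eq:C3_nondegeneracy} rather than lean on Rump's finiteness theorem; it is a one-liner, since $T(g) = \lambda_g^{-1}(g) = \lambda_{g^{\circ -1}}(g) = g^{\circ -1} \circ g - g^{\circ -1} = -g^{\circ -1}$ is visibly a bijection with inverse $h \mapsto (-h)^{\circ -1}$.
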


Given a brace $B$, we will call a subgroup $I \leq B^+$ a \emph{left ideal} if $\lambda_g(I) \subseteq I$ for all $g \in B$. In this case, $I$ is a subbrace of $B$. Furthermore, an \emph{ideal} of a brace $B$ is a left ideal $I \subseteq B$ such that $I$ is normal in $B^{\circ}$.

Two important (left) ideals of a brace are given by its socle and its fix:

The \emph{socle} of a brace $B$ is defined as
\[
\Soc(B) = \{ g \in B \ : \ \forall h \in B: g + h = g \circ h \} = \{ g \in B \ : \ \forall h \in B: \lambda_g(h) = h \} = \ker(\lambda).
\]
It can be shown that $\Soc(B)$ is an ideal in $B$.

On the other hand, the \emph{fix} of a brace $B$ is defined as:
\[
\Fix(B) = \{ h \in B \ : \ \forall g \in B: g + h = g \circ h \} = \{ h \in B \ : \ \forall g \in B: \lambda_g(h) = h \}.
\]
$\Fix(B)$ is a left ideal in $B$ but not necessarily an ideal.

It is notable that the elements of the fix provide automorphisms of a brace:

\begin{pro}\label{pro:fix_of_brace_provides_automorphisms}
    Let $B$ be a brace and $f \in \Fix(B)$, then $\lambda_f(g) = {}^{\circ f}g = f \circ g \circ f^{\circ -1}$ for all $g \in B$. In particular, $\lambda_f \in \Aut(B)$.
\end{pro}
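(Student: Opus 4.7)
The plan is to unwind both the definition of $\lambda_f$ and the defining property of $\Fix(B)$ and observe that they combine to give exactly the conjugation formula. The only real content is to apply the $\Fix$ identity to the element $f \circ g \circ f^{\circ -1}$.

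First I would unwrap the definition: by the $\lambda$-action, $\lambda_f(g) = f \circ g - f$, so what has to be proven is the identity
\[
    f \circ g - f = f \circ g \circ f^{\circ -1},
\]
or equivalently, after adding $f$ on the right,
\[
    f \circ g \circ f^{\circ -1} + f = f \circ g.
\]
Next I would recall what it means for $f$ to lie in $\Fix(B)$: by the definition given just before the proposition, $\lambda_h(f) = f$ for \emph{every} $h \in B$, which rewrites as $h \circ f = h + f$ for all $h$. Applying this identity to $h = f \circ g \circ f^{\circ -1}$ immediately yields
\[
    f \circ g \circ f^{\circ -1} + f = f \circ g \circ f^{\circ -1} \circ f = f \circ g,
\]
which is the displayed identity. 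Rearranging gives $\lambda_f(g) = f \circ g \circ f^{\circ -1}$, as desired.

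For the final sentence, I would argue as follows. It is already recorded that $\lambda_f$ belongs to $\Aut(B^+)$ for \emph{every} $f \in B$, so the additive part is free. The formula just proven identifies $\lambda_f$ with inner conjugation by $f$ in the group $B^{\circ}$, and conjugation is automatically a group automorphism of $B^{\circ}$. Hence $\lambda_f$ preserves both $+$ and $\circ$, so $\lambda_f \in \Aut(B)$ as a brace automorphism.

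There is no genuine obstacle here; the only subtlety is choosing the right element to plug into the $\Fix$ identity (namely $f \circ g \circ f^{\circ -1}$, rather than $g$ itself). Everything else is a rearrangement using the brace axioms, and the automorphism statement is a one-line corollary of the explicit conjugation formula.
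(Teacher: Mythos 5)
Your proof is correct. The paper itself does not prove this proposition --- it simply cites \cite[Lemma 1.1]{drz_psquare} --- so there is no in-text argument to compare against; what you have written is a valid self-contained replacement. The key step checks out: since $f \in \Fix(B)$ means $h \circ f = h + f$ for \emph{all} $h \in B$, applying this with $h = f \circ g \circ f^{\circ -1}$ gives $f \circ g = f \circ g \circ f^{\circ -1} + f$, hence $\lambda_f(g) = f \circ g - f = f \circ g \circ f^{\circ -1}$. Your observation that the only subtlety is choosing the right element to substitute into the $\Fix$ identity is exactly right, and the concluding automorphism statement follows as you say: $\lambda_f \in \Aut(B^+)$ holds for every element of a brace, and the conjugation formula makes $\lambda_f$ an automorphism of $B^{\circ}$ as well, so it is a brace automorphism.
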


\begin{proof}
    See \cite[Lemma 1.1]{drz_psquare}.
\end{proof}

For a positive integer $n$, we denote by $\pi(n)$ the set of prime numbers dividing $n$. Given this function, we recall some important properties and subsets of braces related to divisibility.

Let $\pi$ be an arbitrary set of primes. Given such a set and a brace $B$, we can define the \emph{$\pi$-primary component} $B_{\pi} = \{ g \in B: \pi(o_+(g)) \subseteq \pi \}$, that is, the Hall-$\pi$ subgroup of $B^+$. As this is a characteristic subgroup of $B^+$, it is invariant under the $\lambda$-action and therefore, $B_{\pi}$ is a left ideal in $B$. In the case of a single prime $p$, i.e. $\pi = \{ p \}$, we drop the set notation and simply denote this left ideal by $B_p$, calling it the \emph{$p$-primary component}.

A central role will be played by what is called a \emph{$p$-brace}:

\begin{defn} \label{defn:p-brace}
    A \emph{$p$-brace} is a brace $B$ such that $\pi(|B|) = \{p \}$ for a prime $p$.
\end{defn}

Note that if the prime $p$ is specified, we will substitute it into this notion and talk about $2$-braces, $3$-braces,\ldots

\begin{pro} \label{pro:p_braces_have_nontrivial_fix}
    Let $B$ be a $p$-brace, then $\Fix(B) \neq 0$.
\end{pro}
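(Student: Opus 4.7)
The plan is to apply the classical orbit-counting argument for $p$-group actions to the action of $B^{\circ}$ on $B^{+}$ given by the $\lambda$-map.

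First I would observe that the image of the homomorphism $B^{\circ} \to \Aut(B^{+})$, $g \mapsto \lambda_g$, is a subgroup of $\Aut(B^{+})$, and in particular $B^{\circ}$ acts on the underlying set $B$ through the $\lambda$-maps. The set of common fixed points of this action is, by definition, $\Fix(B)$. Since $B$ is a $p$-brace, both $|B^{\circ}| = |B^{+}| = p^{v}$ are powers of the prime $p$, so this is an action of a finite $p$-group on a finite set of $p$-power order.

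Next, I would use the fact that $0 \in \Fix(B)$: indeed, $\lambda_g(0) = g \circ 0 - g = g - g = 0$ for all $g \in B$, using that the additive and multiplicative identities coincide. Hence $|\Fix(B)| \geq 1$.

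Finally, I would invoke the standard orbit-counting lemma: if a finite $p$-group $P$ acts on a finite set $S$, then $|S| \equiv |S^{P}| \pmod{p}$. Applied to the action of $B^{\circ}$ on $B$, this yields $|\Fix(B)| \equiv |B| \equiv 0 \pmod{p}$. Combined with $|\Fix(B)| \geq 1$, this forces $|\Fix(B)| \geq p$, and therefore $\Fix(B) \neq 0$, as required.

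There is essentially no obstacle here; the only thing to be a bit careful about is verifying that $0$ is fixed (so that the fixed-point set is non-empty before we apply congruence mod $p$), and pointing out that the action in question really is an action of $B^{\circ}$ on the finite $p$-set $B$. Everything else is the classical fixed-point theorem for $p$-group actions.
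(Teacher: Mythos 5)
Your proposal is correct and is essentially the paper's own argument: the paper simply cites the well-known fact that a finite $p$-group acting by automorphisms on a finite $p$-group has nontrivial fixed points, and your orbit-counting argument (noting that $0$ is fixed and that $|\Fix(B)| \equiv |B| \equiv 0 \pmod{p}$) is exactly the standard proof of that fact, spelled out. No gaps.
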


\begin{proof}
    By definition, $\Fix(B)$ is the set of elements fixed under the $\lambda$-action of $B^{\circ}$ on $B^+$. It is well-known that an action of a finite $p$-group by automorphisms of a finite $p$-group has nontrivial fixed points, therefore the proposition follows.
\end{proof}

We will very often make use of the following argument that relates fix, center and socle of a brace. From now on, we abbreviate $\Zen(B^{\circ}) = \Zen(B)$.

\begin{pro} \label{pro:center_and_fix_intersect_in_socle}
    If $B$ is a brace, then $\Zen(B) \cap \Fix(B) \subseteq \Soc(B)$.
\end{pro}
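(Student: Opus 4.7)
The plan is to take an arbitrary element $f \in \Zen(B) \cap \Fix(B)$ and show directly that $\lambda_f = \id_B$, which is precisely the condition $f \in \Soc(B) = \Ker(\lambda)$.

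The key tool is \cref{pro:fix_of_brace_provides_automorphisms}: since $f \in \Fix(B)$, the automorphism $\lambda_f$ coincides with multiplicative conjugation, that is, $\lambda_f(g) = f \circ g \circ f^{\circ -1}$ for every $g \in B$. This already reduces the problem to a group-theoretic statement about $B^{\circ}$.

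Now I invoke the centrality hypothesis. Because $f \in \Zen(B) = \Zen(B^{\circ})$, the element $f$ commutes with every $g \in B$ under $\circ$, so $f \circ g \circ f^{\circ -1} = g$. Combining this with the previous identity gives $\lambda_f(g) = g$ for all $g \in B$, and hence $f \in \Soc(B)$ by the very definition of the socle.

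There is no serious obstacle here; the proof is essentially a one-line combination of \cref{pro:fix_of_brace_provides_automorphisms} with the definition of the center. The only thing worth flagging is that the argument is asymmetric: it is the fix-hypothesis that upgrades $\lambda_f$ to a conjugation, after which centrality trivializes it; neither hypothesis alone suffices.
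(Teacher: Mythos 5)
Your proof is correct. The only point to check is that you may legitimately conclude from $\lambda_f = \id_B$ that $f \in \Soc(B)$; since the paper defines $\Soc(B) = \ker(\lambda)$ as one of three equivalent descriptions, this is fine.

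Your route is slightly different from the paper's, though both are one-liners. The paper works with the first characterization of the socle, $\Soc(B) = \{ g : \forall h:\ g + h = g \circ h\}$, and verifies it directly: the fix-condition gives $h \circ g = h + g$ for all $h$, centrality gives $g \circ h = h \circ g$, and commutativity of $+$ closes the chain $g \circ h = h + g = g + h$. You instead pass through \cref{pro:fix_of_brace_provides_automorphisms} to identify $\lambda_f$ with conjugation by $f$ in $B^{\circ}$, and then let centrality kill the conjugation, landing in $\Soc(B) = \ker(\lambda)$. The paper's argument is marginally more self-contained (it uses only the definitions of $\Fix$, $\Zen$, and $\Soc$), whereas yours leans on the cited conjugation formula; in exchange, your version makes the group-theoretic content transparent --- the fix turns $\lambda_f$ into an inner automorphism of $B^{\circ}$, and the center is exactly where inner automorphisms become trivial. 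Your closing remark about the asymmetry of the two hypotheses is accurate and applies equally to both arguments.
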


\begin{proof}
    Let $g \in \Zen(B) \cap \Fix(B)$, then for all $h \in B$, we have $g \circ h = h \circ g = h + g$.
\end{proof}

\medskip

\subsection{The permutation brace of a cycle set}~

The strength of brace theory in the theory of cycle sets lies in the fact that there is a brace structure on $\G(X)$ that reflects a significant amount of the structure of $X$ as a cycle set.

For a cycle set $X$, a unique additive group operation $(g,h) \mapsto g + h$ can be defined on $\G(X)$ such that $\G(X)$ is a brace with $\lambda_x + \lambda_y = \lambda_x \circ \lambda_{x \ast y}$ for all $x,y \in X$ where $\circ$ is the usual composition of elements in $\G(X) \leq \Sym_X$ \cite{Rump_braces}.
If we want to emphasize the brace structure of $\G(X)$, we will also refer to $\G(X)$ as the \emph{permutation brace} of $X$. A fundamental property of the permutation brace is that the $\lambda$-action of $\G(X)$ on the set $\{ \lambda_x: x \in X \}$ coincides with the action of $\G(X)$ induced on $X_{\ret}$. More precisely, all $g \in \G(X)$, $x \in X$ satisfy the equation
\begin{equation} \label{eq:fundamental_property_of_permutation_brace}
    \lambda_g(\lambda_x) = \lambda_{\lambda_g(x)}.
\end{equation}
The fundamental identity \eqref{eq:fundamental_property_of_permutation_brace} will be used so often that we will in general \emph{not} refer to it when applying it.

In particular, the map $\Ret: X \to \G(X)$; $x \mapsto \lambda_x$ is a homomorphism of cycle set if $\G(X)$ is equipped with the cycle set structure described in \cref{thm:cycle_set_on_a_brace}. Note that $\Ret(x) = \Ret(y)$ if and only if $x \sim y$ with respect to the retraction relation on $X$, so $\Ret$ induces an isomorphism between $X_{\ret}$ and $\Ret(X)$ given by $[x] \mapsto \lambda_x$.

An important feature of the permutation brace $\G(X)$ is that it detects retractability of $X$:

\begin{thm} \label{thm:socle_detects_retractability}
    Let $X$ be a cycle set and $g \in \Soc(\G(X))$, then $\lambda_{\lambda_g(x)} = \lambda_x$ for all $x \in X$. In particular, if $X$ is irretractable then $\Soc(\G(X)) = 0$.
\end{thm}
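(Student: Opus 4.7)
The proof proposal is essentially a direct unwinding of two definitions together with the fundamental identity \eqref{eq:fundamental_property_of_permutation_brace} of the permutation brace.

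The plan is to observe that if $g \in \Soc(\G(X))$, then by definition of the socle the operator $\lambda_g$ acts as the identity on every element of $\G(X)$. In particular, for every $x \in X$, we have $\lambda_g(\lambda_x) = \lambda_x$ inside $\G(X)$. On the other hand, the fundamental identity $\lambda_g(\lambda_x) = \lambda_{\lambda_g(x)}$ (which holds in any permutation brace of a cycle set) identifies this element with $\lambda_{\lambda_g(x)}$. Chaining these two equalities yields the desired statement $\lambda_{\lambda_g(x)} = \lambda_x$ for all $x \in X$.

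For the ``in particular'' clause, I would argue as follows. Suppose $X$ is irretractable, i.e.\ the retraction relation $\sim$ on $X$ is trivial. By the first part, $\lambda_g(x) \sim x$ for every $x \in X$, so irretractability forces $\lambda_g(x) = x$ for every $x$. But $g \in \G(X) \leq \Sym_X$ acts on $X$ precisely via $\lambda_g$, and this action is faithful by construction; hence $g$ is the identity permutation, which is the neutral element $0$ of the brace $\G(X)$. Thus $\Soc(\G(X)) = 0$, as claimed.

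There is no real obstacle here: once one has the fundamental identity \eqref{eq:fundamental_property_of_permutation_brace} at hand, the whole argument amounts to substituting an element of $\G(X)$ of the form $\lambda_x$ into the defining property of the socle. The only subtlety worth spelling out is the transition from ``$\lambda_g = \id_X$'' to ``$g = 0$'', which uses that $\G(X)$ is by definition realized as a subgroup of $\Sym_X$ so that distinct elements induce distinct permutations of $X$.
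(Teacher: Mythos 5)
Your argument is correct and complete: the first claim is exactly the defining property of $\Soc(\G(X)) = \ker(\lambda)$ applied to the element $\lambda_x \in \G(X)$, combined with the identity \eqref{eq:fundamental_property_of_permutation_brace}, and the ``in particular'' clause follows since $\G(X) \leq \Sym_X$ acts faithfully, so $\lambda_g = \id_X$ forces $g = 0$. The paper does not prove this statement itself but merely cites \cite[Lemma 2.1]{BCJO_irretrsqfree}; your direct unwinding is the standard argument behind that reference, and the one subtlety you flag (passing from $\lambda_g = \id_X$ to $g = 0$) is indeed the only point worth spelling out.
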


\begin{proof}
    \cite[Lemma 2.1]{BCJO_irretrsqfree}.
\end{proof}

For a cycle set, the \emph{Dehornoy class} is defined as $d(X) = \exp_+(\G(X))$. Note that this is not the original definition given by Dehornoy in \cite{Dehornoy_RC} but it can be proven \cite[Theorem G]{LRV_Cabling} that the definition used here is equivalent to the definition of \emph{class} given by Dehornoy.

Note that if $X$ is indecomposable, the set $\Ret(X) \subseteq \G(X)$ is a single orbit under the $\lambda$-action of $\G(X)$, so $d(X) = o_+(x)$ for any $x \in X$, as $\G(X) = \gen{\Ret(X)}_+$.

\begin{thm} \label{thm:oT_divides_dX}
    Let $X$ be a cycle set with diagonal $T$, then $o(T)$ divides $d(X)$.
\end{thm}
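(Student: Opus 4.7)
The plan is to prove, for each $x \in X$ and each integer $n \geq 1$, the cabling-type expansion
\[
n \cdot \lambda_x \;=\; \lambda_x \circ \lambda_{T(x)} \circ \cdots \circ \lambda_{T^{n-1}(x)} \qquad \text{(product in }\G(X)^{\circ}\text{)},
\]
and then to specialize at $n = d(X) = \exp_+(\G(X))$.

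First, I would record two elementary facts. From the definition $\lambda_g(a) = g \circ a - g$ it is immediate that
\[
g + h \;=\; g \circ \lambda_{g^{-1}}(h)
\]
in any brace (and, in $\G(X)$, $\lambda_{g^{-1}}$ coincides with the inverse permutation $\lambda_g^{-1}$). Second, for every $y \in X$ one has $\lambda_y(T(y)) = y$, since $T(y) = y \ast y = \sigma_y(y) = \lambda_y^{-1}(y)$.

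Next, I would run an induction on $n$. Writing $g_n$ for the right-hand side of the displayed expansion, I first observe that $g_n^{-1}(x) = T^n(x)$: indeed, evaluating $g_n$ at $T^n(x)$, the innermost factor sends $T^n(x) = T(T^{n-1}(x))$ to $T^{n-1}(x)$ by the second fact above, and iterated cancellation then peels off one factor at a time. The inductive step then reads
\[
(n+1) \cdot \lambda_x \;=\; g_n + \lambda_x \;=\; g_n \circ \lambda_{g_n^{-1}}(\lambda_x) \;=\; g_n \circ \lambda_{g_n^{-1}(x)} \;=\; g_n \circ \lambda_{T^n(x)} \;=\; g_{n+1},
\]
where the third equality is the fundamental identity \eqref{eq:fundamental_property_of_permutation_brace}.

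Finally, taking $n = d(X)$ makes the left-hand side equal to $\id_X$, hence $g_{d(X)} = \id_X$. Evaluating this identity at $T^{d(X)}(x)$ gives $T^{d(X)}(x)$ on one side and, by the same cancellation used to compute $g_n^{-1}(x)$, gives $x$ on the other. Hence $T^{d(X)}(x) = x$ for all $x \in X$, so $T^{d(X)} = \id_X$ and $o(T) \mid d(X)$. The only genuine care needed is to keep the two uses of $\lambda$ straight — the natural action of $\G(X) \leq \Sym_X$ on $X$ versus the brace $\lambda$-action of $\G(X)$ on itself — but \eqref{eq:fundamental_property_of_permutation_brace} is exactly what interchanges them, and this is what makes the induction go through.
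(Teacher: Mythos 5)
Your proof is correct. Note that the paper does not actually prove \cref{thm:oT_divides_dX} itself --- it only cites \cite[Proposition 2.9.(ii)]{Feingesicht} --- so your argument supplies a self-contained proof where the paper defers to an external reference. What you prove along the way is exactly the Lebed--Ram\'{i}rez--Vendramin cabling identity $k \cdot \lambda_x = \lambda_x \circ \lambda_{T(x)} \circ \cdots \circ \lambda_{T^{k-1}(x)}$ quoted in the paper's introduction, and your induction is sound: the identity $g+h = g \circ \lambda_{g^{\circ -1}}(h)$ is the standard rewriting of $\lambda_g(a) = g\circ a - g$, the computation $g_n^{-1}(x) = T^n(x)$ follows by telescoping from $\lambda_y(T(y)) = y$, and \eqref{eq:fundamental_property_of_permutation_brace} is indeed the only bridge needed between the brace $\lambda$-action on $\G(X)$ and the permutation action on $X$. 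Specializing to $n = d(X) = \exp_+(\G(X))$, where $d(X)\cdot\lambda_x = 0 = \id_X$, then gives $T^{d(X)} = \id_X$ exactly as you say. An alternative route staying entirely inside the paper's own machinery would be to invoke the corollary of \cref{pro:diagonals_are_homomorphisms} asserting $T_{k\cdot\id} = T^k$: taking $k = d(X)$ makes $(k\cdot\id)(\lambda_x) = 0$ for every $x$, so $X_{k\cdot\id}$ is the trivial cycle set and $T^{d(X)} = T_{d(X)\cdot\id} = \id_X$. That version hides your explicit product expansion inside the formalism of \cref{sec:endocabling}, whereas your argument is more elementary and makes the mechanism (telescoping along the $T$-orbit) visible; both are valid, and yours has the advantage of not presupposing any of the cabling apparatus.
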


\begin{proof}
    See \cite[Proposition 2.9.(ii)]{Feingesicht}.
\end{proof}

For an indecomposable cycle set $X$, it happens often - but not always, see \cite{rump_primes_in_coverings} - that $\pi(|\G(X)|) = \pi(|X|)$. We conceptualize this phenomenon by the notion of \emph{$p$-type} and $\pi$-type:

\begin{defn}\label{def:p_type}
    Let $X$ be an indecomposable cycle set. If there is a prime $p$ such that $\pi(|X|) = \pi(|\G(X)|) = \{ p \}$, we call $X$ a cycle set of \emph{$p$-type}. If $\pi(|X|) = \pi(|\G(X)|)$, we call $X$ a cycle set of \emph{$\pi$-type}.
\end{defn}

In order to avoid confusion, we note that if the prime $p$ resp. the set of primes $\pi = \pi(|X|)$ is given, the definition can accordingly be specialized, that is, we can then speak about cycle sets of $2$-type resp. $\{2,3,7\}$-type, for example. If we only want to emphasize that such primes resp. sets of primes exist, we do not specialize $p$ resp. $\pi$.

\medskip

\subsection{More brace theory}~

In this subsection, we collect some brace-theoretic facts that will be useful later:

For an integer $k \geq 0$, we define the brace $B_k$ as $B_k^+ = \Z_{2^k}$ with the usual addition, together with the multiplication
\begin{equation} \label{eq:multiplication_on_B_k}
    a \circ b = a + b - 2ab,
\end{equation}
where the product $ab$ is to be understood as the product in the residue class ring $\Z_{2^k}$.

The braces $B_k$ are instances of \emph{primary cyclic braces}, as investigated by Rump \cite[Section 6]{Rump_Cyclic_Braces}.

We will later make use of the fact that central involutions in a brace always generate a subbrace isomorphic to some $B_k$:

\begin{pro} \label{pro:central_involutions_generate_Bk}
    Let $B$ be a finite brace and $z \in \Zen(B)$ with $z^{\circ 2} = 0$. Then $\gen{z}_+$ is a subbrace of $B$ such that $\gen{z}_+ \cong B_k$ for some integer $k$.
\end{pro}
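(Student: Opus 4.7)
The plan is to identify $\gen{z}_+$ as a subbrace by computing its multiplication explicitly from the two hypotheses, and then to force the additive order of $z$ to be a power of $2$ from the requirement that $\gen{z}_+$ be a subgroup of $B^{\circ}$.

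The crucial computation is the formula $\lambda_{mz}(z) = (1 - 2m)z$ for every $m \in \Z$. Since $z^{\circ 2} = 0$, I have $\lambda_z(z) = z \circ z - z = -z$, and the additivity of $\lambda_z$ then gives $\lambda_z(mz) = -mz$. Centrality of $z$ provides $g \circ z = z \circ g$ for all $g \in B$, and unfolding both products via $a \circ b = a + \lambda_a(b)$ yields
\[
    \lambda_g(z) = g \circ z - g = z \circ g - g = z + \lambda_z(g) - g.
\]
Specializing to $g = mz$ gives $\lambda_{mz}(z) = z - mz - mz = (1 - 2m)z$, as claimed. A second application of $a \circ b = a + \lambda_a(b)$, combined with the additivity of $\lambda_{mz}$, then yields
\[
    (mz) \circ (\ell z) = mz + \ell(1 - 2m)z = (m + \ell - 2m\ell)z,
\]
which is precisely the multiplication \eqref{eq:multiplication_on_B_k} defining $B_k$. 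In particular $\gen{z}_+$ is closed under $\circ$; since $B$ is finite, this submonoid of $B^{\circ}$ is automatically a subgroup, so $\gen{z}_+$ is a subbrace of $B$.

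It remains to show that $n = o_+(z)$ is a power of $2$. The additive isomorphism $\Z_n \to \gen{z}_+$, $m \mapsto mz$, transports $\circ$ to the operation $m \circ \ell = m + \ell - 2m\ell$ on $\Z_n$, which must then be a group operation. The main obstacle is to rule out odd prime divisors of $n$: if an odd prime $p$ divided $n$, I would pick $m \in \Z_n$ with $2m \equiv 1 \pmod{p}$ (using that $2$ is invertible modulo $p$); then $1 - 2m \equiv 0 \pmod{p}$, so the map $\ell \mapsto m + (1 - 2m)\ell$ fails to be a bijection on $\Z_n$, contradicting the group axioms. Hence $n = 2^k$ for some $k \geq 0$, and $m \mapsto mz$ provides the required brace isomorphism $B_k \cong \gen{z}_+$.
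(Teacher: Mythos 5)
Your proposal is correct and follows essentially the same route as the paper: the same computation of $\lambda_{mz}(z)=(1-2m)z$ via centrality and $\lambda_z(z)=-z$, the same closure formula $(mz)\circ(\ell z)=(m+\ell-2m\ell)z$, and the same underlying reason that $o_+(z)$ must be a power of $2$ (the paper phrases it as $1-2a$ being a unit because $\lambda_{az}$ is an additive automorphism, you phrase it as bijectivity of left translation, which is the same fact). No gaps.
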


\begin{proof}
    It follows from $z^{\circ 2} = 0$ that $\lambda_z(z) = -z$, and thus, $\lambda_z(az) = -az$ for all integers $a$. This implies
    \[
    z \circ az = z + \lambda_z(az) = z - az = (1-a)z.
    \]
    As $z$ is central, we can compute
    \[
    \lambda_{az}(z) = az \circ z - az = z \circ az - az = (1-a)z - az = (1-2a)z.
    \]
    which implies for integers $a,b$ that
    \[
    az \circ bz = az + \lambda_{az}(bz) = az + b \cdot \lambda_{az}(z) = az + b(1-2a)z = (a + b - 2ab)z
    \]
    This proves that $\gen{z}_+$ is closed under $\circ$. It is clear that $\gen{z}_+ \cong \Z_m$ for some $m$ and, after identifying $az$ with $a \in \Z_m$, it can be seen that $\circ$ is described by \cref{eq:multiplication_on_B_k}. It remains to prove that $m = 2^k$ for some $k$: note that the $\lambda$-action on $\gen{z}_+$ is given by $\lambda_{az}(bz) = (1-2a) \cdot bz$ which shows that $1-2a$ is a unit in $\Z_m$ for all integers $a$. Therefore, $m = 2^k$ for some $k$.
\end{proof}

\begin{pro} \label{pro:structure_of_bk}
    Let $k \geq 2$, then:
    \begin{enumerate} [label=\alph*)]
        \item $\gen{1}_{\circ} = \{0,1 \}$ and $\gen{2}_+$ are subgroups of $B_k^{\circ}$ such that
        \[
        B_k^{\circ} = \gen{1}_{\circ} \times \gen{2}_+.
        \]
        \item The set of elements of order $ \leq 2$ in $B_k^{\circ}$ is $\{ 0,1,2^{k-1},2^{k-1}+1\}$.
        \item $\gen{2}_+^\circ \cong \Z_{2^{k-1}}$.
    \end{enumerate}
\end{pro}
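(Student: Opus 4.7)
The plan is to prove (a), (b), (c) in order, each by a short direct computation in $\Z_{2^k}$ using the multiplication rule $a \circ b = a + b - 2ab$; the ambient group $B_k^{\circ}$ is already known to be abelian, since that formula is symmetric in $a$ and $b$.

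For (a), I would first compute $1 \circ 1 = 1 + 1 - 2 = 0$, so $1$ has $\circ$-order $2$ and $\gen{1}_{\circ} = \{0,1\}$. Next I would observe that the additive subgroup $\gen{2}_+$ of even elements of $\Z_{2^k}$ is closed under $\circ$: if $a,b$ are both even, then $a + b - 2ab$ is again even, so $\gen{2}_+$ is a subgroup of $B_k^{\circ}$ as well. Since one subgroup contains an odd element and the other consists of even elements, $\gen{1}_{\circ} \cap \gen{2}_+ = \{0\}$; commutativity of $\circ$ gives that the two subgroups centralize each other. Since $|\gen{1}_{\circ}| \cdot |\gen{2}_+| = 2 \cdot 2^{k-1} = 2^k = |B_k|$, this yields the internal direct product decomposition.

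For (b), I would solve $x \circ x = 2x - 2x^2 = 2x(1-x) \equiv 0 \pmod{2^k}$, which is equivalent to $x(1-x) \equiv 0 \pmod{2^{k-1}}$. Since $x$ and $1-x$ have opposite parity, exactly one of them is odd and hence a unit modulo $2^{k-1}$, so the congruence reduces to $2^{k-1} \mid x$ or $2^{k-1} \mid 1-x$. Enumerating solutions in $\{0,1,\ldots,2^k-1\}$ gives precisely the four elements $\{0,1,2^{k-1},2^{k-1}+1\}$.

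For (c), part (a) shows that $\gen{2}_+^{\circ}$ is an abelian $2$-group of order $2^{k-1}$. By part (b), its elements of $\circ$-order at most $2$ are the even members of $\{0,1,2^{k-1},2^{k-1}+1\}$, namely $0$ and $2^{k-1}$. A finite abelian $2$-group with a unique nontrivial involution is cyclic, so $\gen{2}_+^{\circ} \cong \Z_{2^{k-1}}$. There is no genuinely hard step here; the one subtlety worth flagging is that $\gen{2}_+$ is defined as an additive subgroup, so its closure under $\circ$ needs an explicit parity check before the direct-product argument in (a) can be applied.
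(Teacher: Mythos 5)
Your proof is correct and follows essentially the same route as the paper: the same counting argument for the direct product in (a), a direct computation of the solutions of $a\circ a=0$ for (b) (the paper organizes the odd case via the decomposition from (a) rather than via the unit $1-x$ modulo $2^{k-1}$, but the computation is the same), and the same ``abelian $2$-group with a unique involution is cyclic'' argument for (c).
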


\begin{proof}
    \begin{enumerate}[label=\alph*)]
        \item It is a routine check that $\gen{1}_{\circ} = \{0,1 \}$. Also, it is quickly checked that $\gen{2}_+$ contains $0$ and is multiplicatively closed which implies, by finiteness, that $\gen{2}_+ \leq B^{\circ}_k$. Furthermore, $\gen{1}_{\circ} \cap \gen{2}_+ = \{ 0 \}$. As
        $
        |\gen{1}_{\circ}| \cdot |\gen{2}_+| = 2 \cdot 2^{k-1} = 2^k = |B_k|
        $, it follows that $B_k^{\circ} = \gen{1}_{\circ} \times \gen{2}_+$.
        \item Let $a \in B_k$ be such that $a^{\circ 2} = 0$. We then have $0 = a^{\circ 2} = 2a - a^2 = 2a \cdot (1-a)$. If $a$ is in $\gen{2}_+$, then $1-a$ is invertible, so $2a = 0$ resp. $a \in \{0,2^{k-1} \}$ in this case. As $1^{\circ 2} = 0$, and $(B_k : \gen{2}_+) = 2$, the only remaining element of order $2$ is $1 \circ 2^{k-1} = 1 + 2^{k-1}$.
        \item $\gen{2}_+^{\circ}$ is an abelian group of order $2^{k-1}$ and, by the last part of the lemma, contains two elements of order $\leq 2$. It follows that $\gen{2}_+^{\circ}$ is cyclic and hence, $\gen{2}_+ \cong \Z_{2^{k-1}}$.
    \end{enumerate}
\end{proof}

We will later need the following result:

\begin{pro} \label{pro:commutativity_of_cyclic_subbraces}
    Let $B$ be a brace and let $a,b \in B$. If $a$ centralizes $\gen{b}_+$ and $b$ centralizes $\gen{a}_+$, then $\gen{a}_+$ centralizes $\gen{b}_+$.
\end{pro}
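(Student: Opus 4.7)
The plan is to reduce everything to a single identity by exploiting that for any $x \in B$ the map $\lambda_x : B^+ \to B^+$ is an additive homomorphism, so $\lambda_x(my) = m\lambda_x(y)$ for every $m \in \Z$. Combined with the fundamental decomposition $x \circ y = x + \lambda_x(y)$, this should reduce the verification of $(ma) \circ (nb) = (nb) \circ (ma)$ to a single identity involving only $a$, $b$, $\lambda_a(b)$ and $\lambda_b(a)$.

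First I would translate the two hypotheses into formulas for the twisted actions. The assumption $b \circ (la) = (la) \circ b$ unfolds to $b + l\lambda_b(a) = la + \lambda_{la}(b)$, yielding
\[
\lambda_{la}(b) = b + l\bigl(\lambda_b(a) - a\bigr) \qquad \text{for all } l \in \Z,
\]
and symmetrically, the hypothesis $a \circ (kb) = (kb) \circ a$ gives
\[
\lambda_{kb}(a) = a + k\bigl(\lambda_a(b) - b\bigr) \qquad \text{for all } k \in \Z.
\]

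Next I would expand both $(ma) \circ (nb) = ma + n\lambda_{ma}(b)$ and $(nb) \circ (ma) = nb + m\lambda_{nb}(a)$ by applying additivity of $\lambda$ in the second argument and then substituting the two formulas above. A direct calculation should show that the difference of the two expressions collapses to
\[
mn\bigl[(\lambda_b(a) - a) - (\lambda_a(b) - b)\bigr].
\]
To close the argument, I would observe that specializing $l = 1$ in the second hypothesis gives $a \circ b = b \circ a$, which is equivalent to $\lambda_a(b) - b = \lambda_b(a) - a$; hence the bracketed factor vanishes and $(ma) \circ (nb) = (nb) \circ (ma)$ for all $m, n \in \Z$.

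I do not anticipate any real obstacle here, since the argument is a mechanical manipulation of the brace identity. The only point requiring a bit of care is the order in which $\lambda_x(my) = m\lambda_x(y)$ and the derived formulas for $\lambda_{la}(b)$ and $\lambda_{kb}(a)$ are applied, so that the two resulting expressions end up in a form where they can be compared term by term.
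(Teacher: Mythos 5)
Your proof is correct and follows essentially the same route as the paper: both arguments expand $(ma)\circ(nb)$ via $x \circ y = x + \lambda_x(y)$ and additivity of $\lambda$ in the second argument, use the centralizing hypotheses to rewrite $\lambda_{ma}(b)$ and $\lambda_{nb}(a)$, and conclude by symmetry of the resulting expression (the paper massages one side into the manifestly symmetric form $(m-mn)a+(n-mn)b+mn(b\circ a)$, while you take the difference of the two sides and observe it collapses to $mn\bigl[(\lambda_b(a)-a)-(\lambda_a(b)-b)\bigr]=0$ since $a\circ b=b\circ a$). The two presentations are computationally identical.
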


\begin{proof}
    Let $a,b$ be as in the statement. For arbitrary integers $m,n$, we calculate:
    \begin{align*}
        ma \circ nb & = ma + n \lambda_{ma}(b) \\
        & = ma + n(ma \circ b - ma) \\
        & = ma + n(b \circ ma - ma) \\
        & = ma + n(b + m \lambda_b(a) - ma) \\
        & = ma + n(b + m(b \circ a - b) - ma ) \\
        & = (m - mn)a + (n-mn)b + mn(b \circ a).
    \end{align*}
    The last expression is symmetric in $ma$ and $nb$, so the statement follows.
\end{proof}

\section{Endocabling} \label{sec:endocabling}

The aim in this section is to investigate deformations of cycle sets by means of what we call \emph{$\lambda$-endomorphisms} of braces.

\begin{defn}
    Let $B$ be a brace. A homomorphism $\varphi: B^+ \to B^+$ is a \emph{$\lambda$-endomorphism} if for all $g,h \in B$, we have
    \begin{equation} \label{eq:endocabling_condition}
    \lambda_g(\varphi(h)) = \varphi(\lambda_g(h)).
    \end{equation}
    A homomorphism $\varphi: B^+ \to B^+$ is a \emph{relative $\lambda$-endomorphism} if \cref{eq:endocabling_condition} is satisfied for all $g \in \varphi(B)$ and $h \in B$.
\end{defn}

Denote by $\Endl(B)$ the set of $\lambda$-endomorphisms of a brace $B$ and by $\Endl'(B)$ the set of its relative $\lambda$-endomorphisms.

Using a module-theoretic language, a $\lambda$-endomorphism is an endomorphism of $B^+$, considered as a (left) $\Z[B^{\circ}]$-module by means of the $\lambda$-action.

\begin{exa}
Let $B$ be a brace.
    \begin{enumerate}
        \item For any $k \in \Z$, the map $\varphi: B \to B$; $g \mapsto k \cdot g$ is a $\lambda$-endomorphism of $B$.
        \item Let $z \in \Zen(B)$, then $\lambda_z \in \Endl(B)$ as $\lambda_z$ commutes with $\lambda_g$ for all $g \in B$.
        \item More generally, let $\Z[B^{\circ}]$ be the integral group ring of $B^{\circ}$, denote its center by $\Zen(\Z[B^{\circ}])$ and let
        \[
        r = \sum_{g \in B} k_g e_g \in \Zen(\Z[B^{\circ}]),
        \]
        where $e_g$ is the generator of $\Z[B^{\circ}]$ corresponding to the group element $g \in B^{\circ}$. Then
        \[
        \varphi: B \to B; \ h \mapsto \sum_{g \in B} k_g \lambda_g(h) 
        \]
        is a $\lambda$-endomorphism of $B$ because the center of a ring $R$ acts by endomorphisms on each $R$-module.
    \end{enumerate}
\end{exa}

A straightforward consequence of the definition of a $\lambda$-endomorphism is:

\begin{pro} \label{pro:basics_for_lambda_endomorphisms}
    Let $B$ be a brace, then:
    \begin{enumerate}[label=\alph*)]
        \item $\varphi \in \Endl(B) \Rightarrow \varphi \in \Endl'(B)$,
        \item $\varphi \circ \psi \in \Endl(B)$ for $\varphi,\psi \in \Endl(B)$,
        \item $\varphi \circ \psi \in \Endl'(B)$ for $\psi \in \Endl'(B)$, $\varphi \in \Endl'(\psi(B))$\footnote{Note that this is a slight abuse of notation: by $\varphi \circ \psi$, we mean the map that is constructed by composing $\varphi$ with the corestriction of $\psi$ to the image of $\psi$, and then extending the codomain of the composition to $B$ again!}.
        \item $\varphi + \psi \in \Endl(B)$ for $\varphi, \psi \in \Endl(B)$.
    \end{enumerate}
\end{pro}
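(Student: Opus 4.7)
The plan is to verify each clause directly from the definitions, making only routine bookkeeping use of the fact that the $\lambda$-action is additive in its second argument and of the formula for the multiplication $a \circ b = a + \lambda_a(b)$ in a brace.

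Part (a) will be a single sentence: the defining identity \eqref{eq:endocabling_condition} is demanded for \emph{all} $g \in B$ in the definition of $\Endl(B)$, which is in particular demanded for all $g \in \varphi(B) \subseteq B$, so $\Endl(B) \subseteq \Endl'(B)$. Part (b) will also be short: the composition of two additive endomorphisms is an additive endomorphism, and for $g \in B$, $h \in B$ one applies the $\lambda$-endomorphism identity for $\varphi$ and then for $\psi$ to get
\[
\lambda_g(\varphi(\psi(h))) = \varphi(\lambda_g(\psi(h))) = \varphi(\psi(\lambda_g(h))).
\]
Part (d) is just as direct: since $\lambda_g$ is additive and $B^+$ is abelian, $\varphi + \psi$ is additive, and for any $g,h \in B$ one has
\[
\lambda_g(\varphi(h) + \psi(h)) = \lambda_g(\varphi(h)) + \lambda_g(\psi(h)) = \varphi(\lambda_g(h)) + \psi(\lambda_g(h)),
\]
which is exactly $(\varphi+\psi)(\lambda_g(h))$.

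The only clause that requires genuine care is (c), and this is where I expect the main obstacle: one must first observe that the statement $\varphi \in \Endl'(\psi(B))$ presupposes that $\psi(B)$ carries a subbrace structure. I would therefore start (c) by a brief sub-argument proving that for $\psi \in \Endl'(B)$ the additive subgroup $\psi(B)$ is closed under $\circ$: for $a,a'\in \psi(B)$ we have $a \circ a' = a + \lambda_a(a')$, and applying the relative condition with $g = a \in \psi(B)$ gives $\lambda_a(a') \in \psi(B)$; together with finiteness of $B$ this makes $\psi(B)$ a subbrace. The footnote in the statement then legitimates reading $\varphi \circ \psi$ as a map $B \to B$ via the corestriction of $\psi$ to $\psi(B)$.

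With $\psi(B)$ a subbrace in hand, (c) is verified as follows. Take $g \in (\varphi \circ \psi)(B)$ and $h \in B$. Since $\varphi$ is an endomorphism of $\psi(B)$, we have $g \in \varphi(\psi(B)) \subseteq \psi(B)$. Because $\psi(h) \in \psi(B)$ and $g \in \varphi(\psi(B))$, the relative $\lambda$-endomorphism condition for $\varphi$ inside $\psi(B)$ yields
\[
\lambda_g(\varphi(\psi(h))) = \varphi(\lambda_g(\psi(h))).
\]
Then, since $g \in \psi(B)$ and $h \in B$, the relative $\lambda$-endomorphism condition for $\psi$ in $B$ gives $\lambda_g(\psi(h)) = \psi(\lambda_g(h))$; composing with $\varphi$ and chaining the two equalities produces $\lambda_g((\varphi\circ\psi)(h)) = (\varphi\circ\psi)(\lambda_g(h))$, which is exactly the relative $\lambda$-endomorphism condition for $\varphi \circ \psi$ on $B$.
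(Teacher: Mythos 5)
Your proof is correct. The paper states this proposition without proof, presenting it as ``a straightforward consequence of the definition'', so there is no argument of the author's to compare against; your direct verifications of (a), (b) and (d) are exactly the routine computations intended, and your treatment of (c) is the right one. The genuine value you add is the preliminary observation in (c) that $\psi(B)$ must be shown to be a subbrace before the hypothesis $\varphi \in \Endl'(\psi(B))$ even parses, and your argument for multiplicative closure (for $a,a' \in \psi(B)$ one has $a \circ a' = a + \lambda_a(a')$ with $\lambda_a(a') = \lambda_a(\psi(h')) = \psi(\lambda_a(h')) \in \psi(B)$ by the relative condition) is correct. The only caveat is your appeal to finiteness of $B$ to obtain closure of $\psi(B)$ under $\circ$-inverses: the proposition is stated for an arbitrary brace, and without finiteness one would need a separate argument there (the naive one, $a^{\circ -1} = \lambda_a^{-1}(-a)$, is circular since it presupposes $\lambda_a^{-1}$ preserves $\psi(B)$). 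In all of the paper's applications $B = \G(X)$ is finite, so this is harmless in context, but it is worth flagging as the one step that uses a hypothesis not present in the statement.
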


As consequence, $\Endl(B)$ is a ring under addition and composition of endomorphisms that is unital with $1_{\Endl(B)} = \id_B$. From now on, we will often abbreviate $\id_B = \id$ for the identical $\lambda$-endomorphism, in favour of an economical notation when talking about $\lambda$-endomorphisms of a permutation brace $\G(X)$.

We will now introduce the technique of \emph{endocabling}: let $X$ be a cycle set with permutation brace $\G(X)$. Given (relative) $\lambda$-endomorphisms of $\G(X)$, it is possible to deform the operation of $X$ in the following way:

\begin{defn}
    Let $X = (X, \ast)$ be a cycle set and let $\varphi \in \Endl'(\G(X))$. Then the \emph{$\varphi$-cabling} of $X$ is the set $X_{\varphi} = X$, with the binary operation
    \[
    x \cab{\varphi} y = \lambda_{\varphi(\lambda_x)}^{-1}(y).
    \]
\end{defn}

\begin{pro}
    Let $X$ be a cycle set and $\varphi \in \Endl'(\G(X))$, then $X_{\varphi}$ is a cycle set.
\end{pro}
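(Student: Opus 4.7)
The plan is to verify axioms \eqref{eq:C1_bijectivity} and \eqref{eq:C2_cycloid_equation} for $\cab{\varphi}$ on $X$; axiom \eqref{eq:C3_nondegeneracy} then follows automatically from Rump's theorem, since we work throughout with finite cycle sets. Axiom \eqref{eq:C1_bijectivity}, the bijectivity of $\sigma_x^{\varphi}\colon y \mapsto \lambda_{\varphi(\lambda_x)}^{-1}(y)$, is immediate: $\varphi(\lambda_x)$ is an element of $\G(X) \leq \Sym_X$, so $\sigma_x^{\varphi}$ is a permutation of $X$.

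The cycloid equation \eqref{eq:C2_cycloid_equation} is the substance of the proof. Fix $x,y,z \in X$ and set $a = \varphi(\lambda_x)$, $b = \varphi(\lambda_y)$ in $\G(X)$. Unfolding,
\[
(x \cab{\varphi} y) \cab{\varphi} (x \cab{\varphi} z) \;=\; \lambda_{\varphi(\lambda_{\lambda_a^{-1}(y)})}^{-1}\bigl(\lambda_a^{-1}(z)\bigr).
\]
By the fundamental identity \eqref{eq:fundamental_property_of_permutation_brace}, $\lambda_{\lambda_a^{-1}(y)} = \lambda_{a^{-1}}(\lambda_y)$, where the outer $\lambda_{a^{-1}}$ now denotes the brace $\lambda$-action on $\G(X)$. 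Since $a = \varphi(\lambda_x) \in \varphi(\G(X))$, the relative $\lambda$-endomorphism condition gives $\lambda_a \circ \varphi = \varphi \circ \lambda_a$; composing this relation with $\lambda_{a^{-1}} = \lambda_a^{-1}$ on both sides yields the analogous commutation $\lambda_{a^{-1}} \circ \varphi = \varphi \circ \lambda_{a^{-1}}$, and consequently $\varphi(\lambda_{a^{-1}}(\lambda_y)) = \lambda_{a^{-1}}(b)$. Setting $u = \lambda_{a^{-1}}(b)$, the left-hand side above simplifies to $\lambda_{a \circ u}^{-1}(z)$.

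A fully symmetric computation turns the right-hand side of \eqref{eq:C2_cycloid_equation} into $\lambda_{b \circ v}^{-1}(z)$ with $v = \lambda_{b^{-1}}(a)$, so the claim reduces to the identity $a \circ u = b \circ v$. This I would extract directly from the universal brace formula $g \circ h = g + \lambda_g(h)$: applying it with $h = \lambda_{a^{-1}}(b)$ gives $a \circ u = a + \lambda_{a \circ a^{-1}}(b) = a + b$, and symmetrically $b \circ v = b + a$. Commutativity of the additive group $\G(X)^{+}$ then settles \eqref{eq:C2_cycloid_equation}.

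The only real subtlety is notational: the symbol $\lambda$ occurs with two simultaneous meanings in the calculation---the brace $\lambda$-action of $\G(X)$ on itself, and the action of $\G(X)$ on $X$ by permutations---and they are interchanged via the fundamental identity. The extension of the commutation of $\varphi$ from $a$ to $a^{-1}$ is exactly the point where the \emph{relative} condition proves to be enough, so the proof goes through uniformly for $\varphi \in \Endl'(\G(X))$ and, a fortiori, for $\varphi \in \Endl(\G(X))$.
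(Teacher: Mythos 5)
Your proof is correct and follows essentially the same route as the paper: unfold the cycloid equation, use the relative commutation $\lambda_a\circ\varphi=\varphi\circ\lambda_a$ for $a\in\varphi(\G(X))$ (extended to $a^{-1}$) together with the fundamental identity to rewrite both sides as $\lambda^{-1}_{\varphi(\lambda_x)+\varphi(\lambda_y)}(z)$, and invoke Rump's theorem for non-degeneracy. The only cosmetic difference is that you make explicit the identity $a\circ\lambda_{a^{-1}}(b)=a+b$ where the paper writes $g\circ\lambda_g^{-1}(h)=g+h$ directly.
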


\begin{proof}
    As $\varphi(\lambda_x) \in \G(X)$ for all $x \in X$, cycle set axiom \eqref{eq:C1_bijectivity} is obvious. Now let $x,y,z \in X$. We calculate
    \begin{align*}
        (x \cab{\varphi} y) \cab{\varphi} (x \cab{\varphi} z) & = \lambda_{\varphi(\lambda_x)}^{-1}(y) \cab{\varphi} \lambda_{\varphi(\lambda_x)}^{-1}(z) \\
        & = \lambda^{-1}_{\varphi(\lambda_{\varphi(\lambda_x)}^{-1}(\lambda_y))} (\lambda_{\varphi(\lambda_x)}^{-1}(z)) \\
        & = \lambda^{-1}_{\varphi(\lambda_x) \circ \varphi(\lambda_{\varphi(\lambda_x)}^{-1}(\lambda_y))} (z) \\
        & = \lambda^{-1}_{\varphi(\lambda_x) \circ \lambda^{-1}_{\varphi(\lambda_x)}(\varphi(\lambda_y))}(z) \\
        & = \lambda^{-1}_{\varphi(\lambda_x) + \varphi(\lambda_y)}(z).
    \end{align*}
    As the final term is symmetric in the arguments $x,y$, the same follows for the first term, therefore proving the validity of the cycle set axiom \eqref{eq:C2_cycloid_equation} for $X_{\varphi}$. As $X$ is assumed to be finite, non-degeneracy \eqref{eq:C3_nondegeneracy}, follows from \cite[Theorem 2]{Rump_Decomposition}.
\end{proof}

\begin{rem}
    The author has been informed by Senne Trappeniers about the fact that weak endomorphisms give rise to deformations of skew braces in a similar way \cite[Proposition 6.18]{bi_skew_brace_blocks}.
\end{rem}

\begin{exa} \label{exa:some_endocablings}
We list some examples of endocablings:

    \begin{enumerate}
        \item Let $\varphi = k \cdot \id \in \Endl(\G(X))$ for some integer $k \geq 0$, then $X_{\varphi}$ coincides with the $k$-cabled cycle set $X^{[k]}$, as defined by Lebed, Ram\'{i}rez, Vendramin \cite{LRV_Cabling}.
        \item For $\varphi = -\id \in \Endl(\G(X))$, the cabling $X_{\varphi}$ is the \emph{dual} cycle set structure on $X$, in the sense of Rump \cite[Definition 1]{Rump_Decomposition}, with the operation $x \Tilde{\ast} y = \lambda_{T^{-1}(x)}(y)$.
        \item Let $X$ be a cycle set of $p$-type (see \cref{def:p_type}) and let $f \in \Fix(\G(X))$ be such that $f^{\circ p} = 0$. Then there is a unique $\varphi \in \Endl(\G(X))$ with $\varphi(\lambda_x) = f^{\circ -1}$ for all $x \in X$. The cycle set $X_{\varphi}$ then carries the operation
        $x \cab{\varphi} y = \lambda^{-1}_{\varphi(\lambda_x)}(y) = \lambda_{f^{\circ -1}}^{-1}(y) = \lambda_f(y)$.
    \end{enumerate}
\end{exa}

It turns out that we have a decent amount of control over the permutation group of $X$ when deforming $X$ by endocabling:

\begin{pro} \label{pro:permutation_group_of_endocabled_cycle_set}
    Let $X$ be a cycle set and let $\varphi \in \Endl(\G(X))$, then $\G(X_{\varphi}) = \varphi(\G(X))$. Furthermore, if $\varphi \in \Endl(\G(X))$, then $\G(X_{\varphi})$ is a left ideal in $\G(X)$.
\end{pro}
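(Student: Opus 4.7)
Reading $x \cab{\varphi} y = \lambda^{-1}_{\varphi(\lambda_x)}(y)$ I identify $\lambda^{X_\varphi}_x = \varphi(\lambda_x)$ as elements of $\G(X)^{\circ} \leq \Sym_X$, so $\G(X_\varphi) = \langle \varphi(\lambda_x) : x \in X \rangle_\circ$ is a multiplicative subgroup of $\G(X)^{\circ}$. For the inclusion $\G(X_\varphi) \subseteq \varphi(\G(X))$, I would first observe that $\varphi(\G(X))$ is a sub-brace of $\G(X)$: additive closure is immediate from $\varphi$ being a group homomorphism of $\G(X)^+$, and $\lambda$-stability follows from the identity $\lambda_g \circ \varphi = \varphi \circ \lambda_g$ (full version for $\Endl$ gives stability under all of $\G(X)$; the relative version already gives stability under $\varphi(\G(X))$ itself, which through $a \circ b = a + \lambda_a(b)$ is enough for multiplicative closure). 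Being a sub-brace, $\varphi(\G(X))$ contains every multiplicative word in the $\varphi(\lambda_x)$, hence contains $\G(X_\varphi)$.

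For the reverse inclusion the plan is to show that $\G(X_\varphi)$, as a subset of $\G(X)$, is \emph{additively} closed in $\G(X)^+$ and contains $\varphi(\Ret(X))$; together with $\varphi(\G(X)) = \langle \varphi(\Ret(X)) \rangle_+$ (which holds because $\G(X)^+$ is additively generated by $\Ret(X)$ and $\varphi$ is additive), this yields $\varphi(\G(X)) \subseteq \G(X_\varphi)$. The intrinsic additive structure of the permutation brace $\G(X_\varphi)$ is also generated by $\varphi(\Ret(X))$, and on two generators a direct manipulation gives
\[
\lambda^{X_\varphi}_x +_{\G(X_\varphi)} \lambda^{X_\varphi}_y \;=\; \lambda^{X_\varphi}_x \circ \lambda^{X_\varphi}_{x \cab{\varphi} y} \;=\; \varphi(\lambda_x) +_{\G(X)} \varphi(\lambda_y),
\]
combining the brace identity $a + b = a \circ \lambda_{a^{\circ -1}}(b)$, the $\lambda$-equivariance of $\varphi$, and \eqref{eq:fundamental_property_of_permutation_brace}. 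The key generalisation is then: for any $c \in \G(X_\varphi)$ and any generator $s = \varphi(\lambda_y)$, both $c +_{\G(X_\varphi)} s$ and $c +_{\G(X)} s$ reduce to $c \circ \varphi(\lambda_{c^{\circ -1}(y)})$. Indeed, the intrinsic version uses the fundamental identity \eqref{eq:fundamental_property_of_permutation_brace} inside $\G(X_\varphi)$ to compute $\lambda^{X_\varphi}_{c^{\circ -1}}(\varphi(\lambda_y)) = \varphi(\lambda_{c^{\circ -1}(y)})$, while the ambient version uses the same fundamental identity in $\G(X)$ after pulling $\lambda^{\G(X)}_{c^{\circ -1}}$ past $\varphi$; this last step is legitimate because $c^{\circ -1} \in \G(X_\varphi) \subseteq \varphi(\G(X))$ by the previous paragraph, so the relative $\lambda$-endomorphism condition applies. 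A short induction on the length of an additive expression in $\G(X_\varphi)^+$ then propagates equality of additions from generators to arbitrary elements.

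Combining the two inclusions yields $\G(X_\varphi) = \varphi(\G(X))$, and the ``Furthermore'' part is then immediate: for $\varphi \in \Endl(\G(X))$ we have $\lambda_g \circ \varphi = \varphi \circ \lambda_g$ for \emph{all} $g \in \G(X)$, so $\lambda_g(\varphi(h)) = \varphi(\lambda_g(h)) \in \varphi(\G(X))$, proving that $\varphi(\G(X)) = \G(X_\varphi)$ is $\lambda$-stable in $\G(X)$ and hence a left ideal. The main obstacle is the inductive bookkeeping in the second paragraph: the agreement of the two additions is transparent on generators but needs to be propagated carefully, and here the crucial input is precisely the fact, established in the first paragraph, that multiplicative inverses of elements of $\G(X_\varphi)$ already lie in $\varphi(\G(X))$, so that the (relative) $\lambda$-equivariance of $\varphi$ may be invoked at every step.
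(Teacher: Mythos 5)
Your argument is correct and follows essentially the same route as the paper: the easy inclusion $\G(X_\varphi)\subseteq\varphi(\G(X))$ comes from $\varphi(\G(X))$ being a subbrace (hence multiplicatively closed), and the reverse inclusion from the identity $\varphi(g)+\varphi(h)=\varphi(g)\circ\varphi\bigl(\lambda^{-1}_{\varphi(g)}(h)\bigr)$ combined with $\lambda$-equivariance, which shows the additive span of the $\varphi(\lambda_x)$ lies in their multiplicative span. Your additional bookkeeping identifying the intrinsic addition of the permutation brace $\G(X_\varphi)$ with the ambient one is more than the statement requires, but it is harmless and correctly justified.
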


\begin{proof}
    By definition, $\G(X_{\varphi}) = \gen{\varphi(\lambda_x): x \in X}_{\circ}$, so it is trivial that $\G(X_{\varphi}) \leq \varphi(\G(X))$. For $g,h \in \G(X)$, we calculate
    \[
    \varphi(g) + \varphi(h) = \varphi(g) \circ \lambda^{-1}_{\varphi(g)}(\varphi(h)) = \varphi(g) \circ \varphi(\lambda^{-1}_{\varphi(g)}(h)).
    \]
    As $\{ \varphi(\lambda_x) : x \in X \} \subseteq \G(X_{\varphi})$, this shows that
    \[
    \varphi(\G(X)) = \varphi(\gen{\lambda_x : x \in X}_+)  = \gen{\varphi(\lambda_x): x \in X}_+ \subseteq \gen{\varphi(\lambda_x): x \in X}_{\circ} = \G(X_{\varphi}),
    \]
    thus proving $\G(X_{\varphi}) = \varphi(\G(X))$.

    The second statement follows from the first one, taking into account the observation that the image of a $\lambda$-endomorphism of a brace is a left ideal.
\end{proof}

It is now appropriate to explain why we decided to introduce the notion of \emph{relative} cabling: for $\varphi \in \Endl(\G(X))$, the brace $\G(X_{\varphi})$ is in general a proper subbrace of $\G(X)$, therefore with $\psi \in \Endl(\G(X_{\varphi}))$, the endocabled cycle set $(X_{\varphi})_{\psi}$ does not necessarily arise from a single endocabling by an element of $\Endl(\G(X))$. However, from \cref{pro:basics_for_lambda_endomorphisms}c) we see that $\psi \circ \varphi \in \Endl'(\G(X))$. So, while the notion of relative endomorphisms is not stable under addition of endomorphisms, it has the feature of being stable under iterated endocablings. With this, we can describe iterated endocablings as the result of a single relative endocabling, as in the proof of \cref{thm:full_diagonal_implies_retractable_2_type}.

One of the main reasons for the usefulness of ``classical'' cabling is that, under cabling, the diagonal behaves in a controllable way. We will soon see that endocabling is not too different with regard to this aspect.

We now demonstrate that two cablings of the same cycle set result in \emph{mixed} cycle sets in the sense of \cite[Proposition 3.8]{Feingesicht}:

\begin{pro}\label{pro:cablings_are_mixed}
    Let $X$ be a cycle set and let $\varphi, \psi \in \Endl(\G(X))$, then all $x,y,z \in X$ satisfy
    \begin{equation} \label{eq:cablings_are_mixed}
        (x \cab{\varphi} y) \cab{\psi} (x \cab{\varphi} z) = (y \cab{\psi} x) \cab{\varphi} (y \cab{\psi} z).
    \end{equation}
\end{pro}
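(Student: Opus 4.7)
The plan is to imitate the computation from the preceding proposition (which handled the case $\varphi = \psi$) and show that both sides of \cref{eq:cablings_are_mixed} equal $\lambda^{-1}_{\varphi(\lambda_x) + \psi(\lambda_y)}(z)$, after which commutativity of $+$ closes the argument.

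First I would unfold the left-hand side using the definition of the $\psi$-cabling:
\[
(x \cab{\varphi} y) \cab{\psi} (x \cab{\varphi} z) = \lambda^{-1}_{\psi(\lambda_{x \cab{\varphi} y})}\bigl(\lambda^{-1}_{\varphi(\lambda_x)}(z)\bigr).
\]
Then I would apply the fundamental identity \eqref{eq:fundamental_property_of_permutation_brace} to rewrite $\lambda_{x \cab{\varphi} y} = \lambda_{\lambda^{-1}_{\varphi(\lambda_x)}(y)} = \lambda^{-1}_{\varphi(\lambda_x)}(\lambda_y)$. The key step is to invoke the hypothesis $\psi \in \Endl(\G(X))$, which allows me to pull $\psi$ inside the $\lambda$-action:
\[
\psi\bigl(\lambda^{-1}_{\varphi(\lambda_x)}(\lambda_y)\bigr) = \lambda^{-1}_{\varphi(\lambda_x)}\bigl(\psi(\lambda_y)\bigr).
\]

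At this point I can combine the two nested $\lambda^{-1}$'s by using multiplicativity of the $\lambda$-map together with the basic brace identity $a + b = a \circ \lambda_a^{-1}(b)$ (equivalently $\lambda_a^{-1}(b) = a^{\circ -1} \circ (a+b)$). Concretely:
\[
\lambda^{-1}_{\lambda^{-1}_{\varphi(\lambda_x)}(\psi(\lambda_y))} \circ \lambda^{-1}_{\varphi(\lambda_x)} = \lambda^{-1}_{\varphi(\lambda_x) \, \circ \, \lambda^{-1}_{\varphi(\lambda_x)}(\psi(\lambda_y))} = \lambda^{-1}_{\varphi(\lambda_x) + \psi(\lambda_y)},
\]
so that the left-hand side of \cref{eq:cablings_are_mixed} equals $\lambda^{-1}_{\varphi(\lambda_x) + \psi(\lambda_y)}(z)$. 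Performing the strictly symmetric calculation on the right-hand side (swapping the roles of $x,\varphi$ with those of $y,\psi$) yields $\lambda^{-1}_{\psi(\lambda_y) + \varphi(\lambda_x)}(z)$. Since the additive group of $\G(X)$ is abelian, the two expressions coincide.

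I do not expect a serious obstacle here: all moves are forced once one has absorbed the proof pattern already used for \cref{pro:basics_for_lambda_endomorphisms}-style identities. The only thing that genuinely requires the $\lambda$-endomorphism hypothesis (and not merely the weaker \emph{relative} version) is the exchange $\psi \circ \lambda^{-1}_{\varphi(\lambda_x)} = \lambda^{-1}_{\varphi(\lambda_x)} \circ \psi$, because the argument $\lambda_y$ on which $\psi$ acts need not lie in $\psi(\G(X))$; this is the subtle point to highlight in the write-up.
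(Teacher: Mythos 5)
Your proof is correct and is essentially the paper's own computation run in reverse: the paper starts from $\lambda^{-1}_{\varphi(\lambda_x)+\psi(\lambda_y)}(z)$ and expands it into each side of \eqref{eq:cablings_are_mixed}, whereas you collapse each side down to that common expression, but the key moves (the brace identity $a+b = a \circ \lambda_a^{-1}(b)$ combined with multiplicativity of $\lambda$, the fundamental identity \eqref{eq:fundamental_property_of_permutation_brace}, and commuting $\psi$ past $\lambda^{-1}_{\varphi(\lambda_x)}$) are identical. One small correction to your closing remark: the relative version of the hypothesis restricts the \emph{acting} element $g$ in $\lambda_g(\psi(h)) = \psi(\lambda_g(h))$ to lie in $\psi(\G(X))$, not the argument $h$, so the genuine obstruction is that $\varphi(\lambda_x)^{\circ -1}$ need not belong to $\psi(\G(X))$, rather than that $\lambda_y$ does not.
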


\begin{proof}
    We calculate
    \begin{align*}
        \lambda_{\varphi(\lambda_x) + \psi(\lambda_y)}^{-1}(z) & = \lambda^{-1}_{\varphi(\lambda_x) \circ \lambda^{-1}_{\varphi(\lambda_x)}(\psi(\lambda_y))}(z) \\
        & = \left( \lambda^{-1}_{\lambda^{-1}_{\varphi(\lambda_x)}(\psi(\lambda_y))} \circ \lambda^{-1}_{\varphi(\lambda_x)} \right) (z) \\
        & = \lambda^{-1}_{\psi(\lambda^{-1}_{\varphi(\lambda_x)}(\lambda_y))}(x \cab{\varphi} z) \\
        & = \lambda^{-1}_{\psi \left(\lambda_{\lambda^{-1}_{\varphi(\lambda_x)}(y)} \right)}(x \cab{\varphi} z) \\
        & = \lambda^{-1}_{\psi(\lambda_{x \cab{\varphi} y})} (x \cab{\varphi} z) \\
        & = (x \cab{\varphi} y) \cab{\psi} (x \cab{\varphi} z).
    \end{align*}
    Similarly, one proves that
    \[
    \lambda_{\varphi(\lambda_x) + \psi(\lambda_y)}^{-1}(z) = \lambda_{\psi(\lambda_y) + \varphi(\lambda_x)}^{-1}(z) = (y \cab{\psi} x) \cab{\varphi} (y \cab{\psi} z),
    \]
    thus proving the proposition.
\end{proof}

In the following, we denote the diagonal of $X_{\varphi}$ by $T_{\varphi} = x \cab{\varphi} x$. A consequence of the above proposition is

\begin{pro} \label{pro:adding_two_cablings}
    Let $X$ be a cycle set and let $\varphi, \psi \in \Endl(\G(X))$, then
    \begin{equation} \label{eq:adding_two_cablings}
        x \cab{\varphi + \psi} y = T_{\varphi}(x) \cab{\psi} (x \cab{\varphi} y) = T_{\psi}(x) \cab{\varphi} (x \cab{\psi} y).
    \end{equation}
\end{pro}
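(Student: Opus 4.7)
The plan is to reduce this statement to a direct specialization of the calculation already carried out in the proof of Proposition~\ref{pro:cablings_are_mixed}. Unwinding definitions, we have
\[
x \cab{\varphi + \psi} y = \lambda_{(\varphi + \psi)(\lambda_x)}^{-1}(y) = \lambda_{\varphi(\lambda_x) + \psi(\lambda_x)}^{-1}(y),
\]
and the key intermediate identity established in the proof of Proposition~\ref{pro:cablings_are_mixed} was
\[
\lambda_{\varphi(\lambda_x) + \psi(\lambda_y)}^{-1}(z) = (x \cab{\varphi} y) \cab{\psi} (x \cab{\varphi} z).
\]
Substituting $y \leftarrow x$ (and $z \leftarrow y$) into this identity produces precisely $x \cab{\varphi + \psi} y$ on the left, while on the right $x \cab{\varphi} x = T_\varphi(x)$, yielding the first equality. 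The second equality then follows by commutativity of $\G(X)^+$: $\varphi(\lambda_x) + \psi(\lambda_x) = \psi(\lambda_x) + \varphi(\lambda_x)$, so running the same argument with the roles of $\varphi$ and $\psi$ swapped gives $x \cab{\varphi + \psi} y = T_\psi(x) \cab{\varphi} (x \cab{\psi} y)$.

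If one preferred a self-contained computation instead of extracting the identity from the previous proof, the three ingredients to assemble are: (i) the brace identity $a + b = a \circ \lambda_a^{-1}(b)$, applied to rewrite $\varphi(\lambda_x) + \psi(\lambda_x) = \varphi(\lambda_x) \circ \lambda_{\varphi(\lambda_x)}^{-1}(\psi(\lambda_x))$; (ii) the $\lambda$-endomorphism condition \eqref{eq:endocabling_condition}, which lets one move $\psi$ past $\lambda_{\varphi(\lambda_x)}^{-1}$, giving $\lambda_{\varphi(\lambda_x)}^{-1}(\psi(\lambda_x)) = \psi(\lambda_{\varphi(\lambda_x)}^{-1}(\lambda_x))$; (iii) the fundamental identity \eqref{eq:fundamental_property_of_permutation_brace} (in its inverse form), which turns $\lambda_{\varphi(\lambda_x)}^{-1}(\lambda_x)$ into $\lambda_{\lambda_{\varphi(\lambda_x)}^{-1}(x)} = \lambda_{T_\varphi(x)}$. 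Inverting and applying to $y$ then gives the displayed expression.

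There is essentially no hard step here: the real work was already done in Proposition~\ref{pro:cablings_are_mixed}, and the present proposition is obtained by the diagonal specialization $y = x$ together with a bookkeeping observation about commutativity of brace addition. The only mild care needed is to correctly commute $\psi$ with $\lambda_g^{-1}$, but this is automatic from \eqref{eq:endocabling_condition} once one notes that $\lambda_g \psi = \psi \lambda_g$ implies $\lambda_g^{-1} \psi = \psi \lambda_g^{-1}$.
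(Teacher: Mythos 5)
Your proof is correct and follows essentially the same route as the paper: both extract the intermediate identity $\lambda_{\varphi(\lambda_x)+\psi(\lambda_y)}^{-1}(z) = (x \cab{\varphi} y) \cab{\psi} (x \cab{\varphi} z)$ from the calculation in \cref{pro:cablings_are_mixed} and specialize $y = x$, with the second equality coming from commutativity of $+$ (equivalently, from \cref{eq:cablings_are_mixed} at $y=x$, as the paper phrases it). No gaps.
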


\begin{proof}
From \cref{pro:cablings_are_mixed}, we deduce that
\[
(x \cab{\varphi} x) \cab{\psi} (x \cab{\varphi} y) = (x \cab{\psi} x) \cab{\varphi} (x \cab{\psi} y) \Rightarrow T_{\varphi}(x) \cab{\psi} (x \cab{\varphi} y) = T_{\psi}(x) \cab{\varphi} (x \cab{\psi} y).
\]
Furthermore, the calculation from \cref{pro:cablings_are_mixed} shows that
\[
x \cab{\varphi +\psi} y = \lambda_{(\varphi + \psi)(\lambda_x)}^{-1}(y) = \lambda_{\varphi(\lambda_x) + \psi(\lambda_x)}^{-1}(y) = (x \cab{\varphi} x) \cab{\psi} (x \cab{\varphi} y) = T_{\varphi}(x) \cab{\psi} (x \cab{\varphi} y).
\]
\end{proof}

 Specializing $y = x$ in \cref{eq:adding_two_cablings}, we get as a consequence:

\begin{pro} \label{pro:diagonals_are_homomorphisms}
    Let $X$ be a cycle set and $\varphi, \psi \in \Endl(\G(X))$. Then
    \begin{equation} \label{eq:diagonals_are_homomorphisms}
        T_{\varphi + \psi} = T_{\varphi} \circ T_{\psi} =  T_{\psi} \circ T_{\varphi}.
    \end{equation}
\end{pro}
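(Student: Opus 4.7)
The statement falls out directly from \cref{pro:adding_two_cablings} by specializing $y = x$, which is precisely what the sentence preceding the proposition advertises. So the plan is simply to unpack this specialization carefully.

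First I would write $T_{\varphi + \psi}(x) = x \cab{\varphi + \psi} x$ and apply the first equality of \cref{eq:adding_two_cablings} with $y$ replaced by $x$:
\[
T_{\varphi + \psi}(x) = T_{\varphi}(x) \cab{\psi} (x \cab{\varphi} x) = T_{\varphi}(x) \cab{\psi} T_{\varphi}(x) = T_{\psi}(T_{\varphi}(x)),
\]
which gives $T_{\varphi + \psi} = T_{\psi} \circ T_{\varphi}$. Then I would apply the second equality of \cref{eq:adding_two_cablings} with $y = x$ in the same way, obtaining
\[
T_{\varphi + \psi}(x) = T_{\psi}(x) \cab{\varphi} (x \cab{\psi} x) = T_{\psi}(x) \cab{\varphi} T_{\psi}(x) = T_{\varphi}(T_{\psi}(x)),
\]
which gives $T_{\varphi + \psi} = T_{\varphi} \circ T_{\psi}$. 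Putting both identities together yields the desired chain $T_{\varphi + \psi} = T_{\varphi} \circ T_{\psi} = T_{\psi} \circ T_{\varphi}$.

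There is essentially no obstacle here: the work has already been done in establishing \cref{pro:cablings_are_mixed} and \cref{pro:adding_two_cablings}, and this statement is the clean diagonal specialization. The only thing to be careful about is to note explicitly that $x \cab{\varphi} x = T_{\varphi}(x)$ (and analogously for $\psi$), so that the two-variable formula collapses into a composition of diagonal maps rather than a more complicated expression. Commutativity of $T_{\varphi}$ and $T_{\psi}$ is then a by-product of the symmetry of $\varphi + \psi = \psi + \varphi$ as elements of the ring $\Endl(\G(X))$.
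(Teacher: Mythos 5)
Your proposal is correct and follows essentially the same route as the paper: both specialize $y=x$ in \cref{pro:adding_two_cablings} and use $a \cab{\psi} a = T_{\psi}(a)$ to collapse the expression into a composition. The only cosmetic difference is that you evaluate both sides of \cref{eq:adding_two_cablings} explicitly to get the two orderings, whereas the paper computes one and invokes $\varphi+\psi=\psi+\varphi$ for commutativity.
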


\begin{proof}
    Using \cref{pro:adding_two_cablings}, we calculate for $x \in X$:
    \[
    T_{\varphi + \psi}(x) = (x \cab{\psi} x) \cab{\varphi} (x \cab{\psi} x) = (T_{\varphi} \circ T_{\psi})(x).
    \]
    As $\varphi + \psi = \psi + \varphi$, we see that $T_{\varphi}$ and $T_{\psi}$ commute.
\end{proof}

As a corollary, we recover the following result of Lebed, Ram\'{i}rez and Vendramin \cite[Theorem B.(1)]{LRV_Cabling}:

\begin{cor}
    Let $X$ be a cycle set with diagonal $T$, then $T_{k \cdot \id} = T^k$ for all $k \geq 0$.
\end{cor}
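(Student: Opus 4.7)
The plan is to induct on $k$, using \cref{pro:diagonals_are_homomorphisms} as the engine. The base case $k=0$ is handled by observing that $\varphi = 0 \cdot \id$ is the zero endomorphism, so $\varphi(\lambda_x) = 0 = 1_{\G(X)^\circ}$, whence $\lambda_{\varphi(\lambda_x)} = \id_X$ and $T_{0 \cdot \id}(x) = \lambda_{\varphi(\lambda_x)}^{-1}(x) = x$; thus $T_{0 \cdot \id} = \id_X = T^0$.

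For the inductive step, I would first verify the $k=1$ identity $T_{\id} = T$ directly: unpacking the definition, $T_{\id}(x) = x \cab{\id} x = \lambda_{\lambda_x}^{-1}(x) = \lambda_x^{-1}(x) = x \ast x = T(x)$. Then, assuming $T_{k \cdot \id} = T^k$, I would apply \cref{pro:diagonals_are_homomorphisms} to the sum $(k+1) \cdot \id = k \cdot \id + \id$ in $\Endl(\G(X))$ to obtain
\[
T_{(k+1) \cdot \id} = T_{k \cdot \id + \id} = T_{k \cdot \id} \circ T_{\id} = T^k \circ T = T^{k+1}.
\]

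There is no real obstacle here beyond verifying that $k \cdot \id$ genuinely lies in $\Endl(\G(X))$ and that the base cases are interpreted correctly (in particular the identification $0 = 1$ in a brace, which makes $T_{0 \cdot \id} = \id_X$ well-defined). Both points are immediate from the brace axioms and the first item of the example list following the definition of $\lambda$-endomorphism.
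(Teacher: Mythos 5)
Your proof is correct and follows essentially the same route as the paper: the paper simply writes $T_{k\cdot\id}=T_{\id+\cdots+\id}=T_{\id}\circ\cdots\circ T_{\id}=T^k$ by iterating \cref{pro:diagonals_are_homomorphisms}, which is exactly your induction unrolled. Your explicit verification of the base cases $T_{0\cdot\id}=\id_X$ and $T_{\id}=T$ is a welcome bit of care that the paper leaves implicit.
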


\begin{proof}
    $T_{k \cdot \id} = T_{\underbrace{\id + \ldots + \id}_{k \times}} = \underbrace{T_{\id} \circ \ldots \circ T_{\id}}_{k \times} = T^k$.  
\end{proof}

In the case, when $\varphi = \lambda_z$, where $z \in \Zen(\G(X))$, the diagonal of $X_{\lambda_z}$ is also easily computed:

\begin{pro} \label{pro:diagonal_of_cabling_by_center}
    Let $X$ be a cycle set and $z \in \Zen(\G(X))$, then
    \[
    T_{\lambda_z} = T^{\lambda_z} = \lambda_z^{-1} \circ T \circ \lambda_z.
    \]
\end{pro}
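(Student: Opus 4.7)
The strategy is to compute both sides of the claimed identity directly and identify them by invoking the centrality of $z$. Note first that $\lambda_z \in \Endl(\G(X))$ because $z$ is central (as observed in \cref{exa:some_endocablings}), so the endocabling by $\lambda_z$ is well-defined.

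Unpacking definitions, one has
\[
T_{\lambda_z}(x) = x \cab{\lambda_z} x = \lambda_{\lambda_z(\lambda_x)}^{-1}(x),
\]
and applying the fundamental identity \eqref{eq:fundamental_property_of_permutation_brace} to the inner expression, $\lambda_z(\lambda_x) = \lambda_{\lambda_z(x)}$, yields
\[
T_{\lambda_z}(x) = \lambda_{\lambda_z(x)}^{-1}(x).
\]
On the other hand, using $T(y) = \lambda_y^{-1}(y)$, one computes
\[
T^{\lambda_z}(x) = \lambda_z^{-1}(T(\lambda_z(x))) = \lambda_z^{-1}\bigl(\lambda_{\lambda_z(x)}^{-1}(\lambda_z(x))\bigr).
\]
Therefore, the equality $T_{\lambda_z} = T^{\lambda_z}$ reduces to showing that $\lambda_z$ commutes with $\lambda_{\lambda_z(x)}$ as permutations of $X$, for every $x \in X$.

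This final step is where the centrality of $z$ enters: the brace multiplication $\circ$ on $\G(X)$ coincides with composition of permutations in $\Sym_X$, so $z \in \Zen(\G(X))$ is equivalent to $\lambda_z$ commuting with every element of $\G(X)$ under the permutation action; in particular with $\lambda_{\lambda_z(x)} \in \G(X)$. I do not anticipate any real obstacle here; the main thing to manage carefully is the double role of the symbol $\lambda$, which appears both as the brace $\lambda$-action on $\G(X)^+$ and as the natural action of $\G(X) \leq \Sym_X$ on $X$, but this is purely notational bookkeeping mediated by \eqref{eq:fundamental_property_of_permutation_brace}.
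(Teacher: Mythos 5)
Your proposal is correct and follows essentially the same route as the paper's proof: both reduce $T_{\lambda_z}(x)$ to $\lambda_{\lambda_z(x)}^{-1}(x)$ via the fundamental identity \eqref{eq:fundamental_property_of_permutation_brace} and then use that the central element $z$ commutes, as a permutation, with $\lambda_{\lambda_z(x)} \in \G(X)$ to identify this with $\lambda_z^{-1}(T(\lambda_z(x)))$. The only difference is presentational: the paper writes a single chain of equalities, while you compute both sides and meet in the middle.
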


\begin{proof}
    \[
        T_{\lambda_z}(x)  = x \cab{\lambda_z} x 
         = \lambda^{-1}_{\lambda_z(\lambda_x)}(x) 
         = \lambda^{-1}_{\lambda_z(x)}(x) 
         = \lambda_z(x) \ast x 
         = \lambda_z^{-1} (\lambda_z(x) \ast \lambda_z(x)) 
         = \lambda_z^{-1} ( T (\lambda_z(x))) 
         = T^{\lambda_z}(x).
    \]
\end{proof}

In favour of a more economical notation, we will abbreviate $T^{\lambda_z} = T^z$.

The following corollary of \cref{pro:diagonal_of_cabling_by_center} will be of high importance for the next section, so we state it as a theorem:

\begin{thm} \label{thm:diagonal_commutes_with_conjugated_diagonal}
    Let $X$ be a cycle set and $z \in \Zen(\G(X))$, then $T \circ T^z = T^z \circ T$.
\end{thm}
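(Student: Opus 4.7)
The plan is to apply the commutativity relation from \cref{pro:diagonals_are_homomorphisms} directly, combined with the identification of $T^z$ as the diagonal of a specific endocabling furnished by \cref{pro:diagonal_of_cabling_by_center}.

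First I would note that the original diagonal $T$ is precisely $T_{\id}$, the diagonal of the trivial endocabling (since $x \cab{\id} y = \lambda_x^{-1}(y) = x \ast y$). Next, because $z \in \Zen(\G(X))$, the map $\lambda_z$ is a $\lambda$-endomorphism of $\G(X)$ (this was noted in the examples of $\lambda$-endomorphisms), and by \cref{pro:diagonal_of_cabling_by_center} we have $T_{\lambda_z} = T^z$.

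Now I would apply \cref{pro:diagonals_are_homomorphisms} to the pair $\varphi = \id$, $\psi = \lambda_z$, both of which lie in $\Endl(\G(X))$. That proposition yields
\[
T_{\id + \lambda_z} = T_{\id} \circ T_{\lambda_z} = T_{\lambda_z} \circ T_{\id},
\]
which translates to $T \circ T^z = T^z \circ T$, the desired identity.

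There is really no main obstacle here: the theorem is a one-line corollary of the additive-to-compositional translation $T_{\varphi + \psi} = T_\varphi \circ T_\psi = T_\psi \circ T_\varphi$, once one observes that $T = T_{\id}$ and $T^z = T_{\lambda_z}$ and that $\id + \lambda_z = \lambda_z + \id$ in the ring $\Endl(\G(X))$. The only conceptual point worth flagging is that $\lambda_z$ genuinely belongs to $\Endl(\G(X))$ (not merely $\Endl'(\G(X))$), which is needed to invoke \cref{pro:diagonals_are_homomorphisms}; this is exactly the content of centrality of $z$.
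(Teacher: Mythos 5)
Your proposal is correct and follows exactly the paper's own argument: both identify $T = T_{\id}$ and $T^z = T_{\lambda_z}$ via \cref{pro:diagonal_of_cabling_by_center} and then invoke the commutativity $T_{\id+\lambda_z} = T_{\id}\circ T_{\lambda_z} = T_{\lambda_z}\circ T_{\id}$ from \cref{pro:diagonals_are_homomorphisms}. Your remark that centrality of $z$ is what places $\lambda_z$ in $\Endl(\G(X))$ rather than merely $\Endl'(\G(X))$ is a correct and worthwhile observation.
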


\begin{proof}
    Using \cref{pro:diagonals_are_homomorphisms} and \cref{pro:diagonal_of_cabling_by_center}, we calculate
    \[
    T \circ T^z = T_{\id} \circ T_{\lambda_z} = T_{\id + \lambda_z} = T_{\lambda_z} \circ T_{\id} = T^z \circ T.
    \]
\end{proof}

For a brace $B$ and an element $z \in \Zen(B)$, an important role will later be played by the endomorphism $\id - \lambda_z \in \Endl(B)$. We collect some properties of this endomorphism that we will regularly refer to later:

\begin{pro} \label{pro:properties_of_phi_z}
    Let $B$ be a brace and $z \in \Zen(B)$ with $o_{\circ}(z) = 2$. Let $\varphi = \id - \lambda_z \in \Endl(B)$, then:
    \begin{enumerate}[label=\alph*)]
        \item $\varphi^2 = 2\varphi$. In particular, $\varphi(g) = 2g$ for all $g \in \varphi(B)$.
        \item $\lambda_z(g) = -g$ for all $g \in \varphi(B)$.
        \item $\lambda_g(z) = z-2g$ for all $g \in \varphi(B)$.
        \item $\varphi(g) = z - \lambda_g(z)$ for all $g \in B$.
    \end{enumerate}
\end{pro}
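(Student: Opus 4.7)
The plan is to exploit two facts repeatedly: since $z$ is central, $\lambda_z$ commutes with every $\lambda_g$ (which is already encoded in $\varphi \in \Endl(B)$), and since $z^{\circ 2} = 0$ we get $\lambda_z \circ \lambda_z = \lambda_{z \circ z} = \lambda_0 = \id_B$, so $\lambda_z$ is an additive involution of $B^+$. Parts a)--d) then reduce to short algebraic manipulations inside the brace.

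\textbf{Step 1 (part a).} I expand $\varphi^2(g) = (\id - \lambda_z)(g - \lambda_z(g)) = g - \lambda_z(g) - \lambda_z(g) + \lambda_z^2(g)$. Using $\lambda_z^2 = \id$, the right-hand side collapses to $2g - 2\lambda_z(g) = 2\varphi(g)$, so $\varphi^2 = 2\varphi$. For $g \in \varphi(B)$, writing $g = \varphi(h)$ yields $\varphi(g) = \varphi^2(h) = 2\varphi(h) = 2g$.

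\textbf{Step 2 (part b).} Given $g \in \varphi(B)$, part a) gives $2g = \varphi(g) = g - \lambda_z(g)$, so $\lambda_z(g) = -g$ immediately.

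\textbf{Step 3 (part c).} Here I use the centrality of $z$ in $B^\circ$ to equate $g \circ z$ and $z \circ g$ and then translate each product back into additive form via the $\lambda$-action. On one hand, $g \circ z = g + \lambda_g(z)$; on the other, $z \circ g = z + \lambda_z(g) = z - g$ by part b). Equating the two expressions yields $\lambda_g(z) = z - 2g$ for $g \in \varphi(B)$.

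\textbf{Step 4 (part d).} For arbitrary $g \in B$, I rewrite $\lambda_g(z) = g \circ z - g$; using centrality of $z$ this equals $z \circ g - g = z + \lambda_z(g) - g$. Rearranging gives $z - \lambda_g(z) = g - \lambda_z(g) = \varphi(g)$. None of these steps is really an obstacle — the main subtlety is just the bookkeeping between the additive and multiplicative sides of the brace, and keeping track of which identities require $g \in \varphi(B)$ (parts a)--c)) versus holding for all $g \in B$ (part d)).
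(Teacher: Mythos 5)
Your proposal is correct and follows essentially the same route as the paper: parts c) and d) are the identical computations equating $g\circ z$ with $z\circ g$ and translating via the $\lambda$-action, while for a) and b) (which the paper dismisses as immediate) you correctly supply the details using $\lambda_z^2=\lambda_{z\circ z}=\id$ and the additivity of $\lambda_z$. No gaps.
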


\begin{proof}
    a) and b) are immediate. In order to prove c), we let $g \in \varphi(B)$ and calculate
    \[
    \lambda_g(z) = g \circ z - g = z \circ g - g = z + \lambda_z(g) - g \overset{b)}{=} z- g -g = z-2g.
    \]
    For d), we calculate
    \[
    \varphi(g) = g - \lambda_z(g) = g - z \circ g + z = z - (g \circ z - g) = z - \lambda_g(z).
    \]
\end{proof}

One interesting consequence that will be of high importance later is the fact that, using endocabling, one can \emph{cable out the center}. We put this observation as an independent proposition:

\begin{pro} \label{pro:cabling_out_the_center}
    Let $B$ be a brace and $z \in \Zen(B)$. Let $\varphi = \id - \lambda_z \in \Endl(B)$, then $z \not \in \varphi(B)$.
\end{pro}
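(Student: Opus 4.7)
The plan is to argue by contradiction, pivoting on part~d) of Proposition~\ref{pro:properties_of_phi_z}, which gives the explicit formula $\varphi(g) = z - \lambda_g(z)$ for every $g \in B$. I read Proposition~\ref{pro:cabling_out_the_center} as inheriting the standing hypothesis $o_{\circ}(z) = 2$ of Proposition~\ref{pro:properties_of_phi_z} (without some such non-triviality assumption on $z$ the claim fails, since $z = 0$ always lies in $\varphi(B)$).

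Under this reading, suppose for contradiction that $z \in \varphi(B)$, so $\varphi(g) = z$ for some $g \in B$. By Proposition~\ref{pro:properties_of_phi_z}~d),
\[
z = \varphi(g) = z - \lambda_g(z),
\]
which forces $\lambda_g(z) = 0$. Since $\lambda_g$ is an automorphism of $B^+$, this yields $z = 0$, contradicting $o_{\circ}(z) = 2$.

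There is essentially no obstacle once Proposition~\ref{pro:properties_of_phi_z}~d) is available: the real content has been absorbed into that identity. If one wishes to avoid that invocation, the same conclusion can be reached by a direct calculation — $\varphi(g) = z$ unfolds to $g - \lambda_z(g) = z$, and expanding $\lambda_z(g) = z \circ g - z$ gives $z \circ g = g$, which forces $z = 0$ (right-multiplying by $g^{\circ -1}$ in $B^{\circ}$). This variant additionally shows that the conclusion holds for every nonzero central $z$, with no need for the involution hypothesis.
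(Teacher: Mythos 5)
Your proof is correct and follows essentially the same route as the paper: the paper likewise invokes \cref{pro:properties_of_phi_z}d) (noting it holds without the hypothesis $o_{\circ}(z)=2$) to write $\varphi(g) = z - \lambda_g(z)$ and concludes $\varphi(g) \neq z$ since $\lambda_g(z) \neq 0$. Your observation that the statement implicitly requires $z \neq 0$ (the paper's ``$\lambda_g(z) \neq 0$'' presupposes it) is a fair reading, and your contradiction phrasing and the direct-calculation variant are just reorganizations of the same argument.
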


\begin{proof}
    Note that \cref{pro:properties_of_phi_z}d) is valid without the condition $o_{\circ}(z) = 2$. From this, it follows that for all $g \in B$, we have $\varphi(g) = z - \underbrace{\lambda_g(z)}_{\neq 0} \neq z$.
\end{proof}

Given a cycle set $X$ and an element $f \in \Fix(\G(X))$, the map $\lambda_f$ is an automorphism of $\G(X)$ by \cref{pro:fix_of_brace_provides_automorphisms}. We now prove that $\lambda_f$ is also an automorphism of the cycle set $X$:

\begin{pro} \label{pro:fix_provides_automorphisms}
    Let $X$ be a cycle set and let $f \in \Fix(\G(X))$, then $\lambda_f$ is an automorphism of $X$. Furthermore, $\lambda_f$ centralizes $T$, the diagonal of $X$.
\end{pro}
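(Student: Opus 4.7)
I would deduce both claims from the single intertwining identity
\[
\lambda_f(\lambda_x(z)) = \lambda_{\lambda_f(x)}(\lambda_f(z)) \quad \text{for all } x,z \in X,
\]
which says that, viewed as a permutation of $X$, the element $f$ conjugates $\lambda_x$ into $\lambda_{\lambda_f(x)}$. Once this is in hand, the claim that $\lambda_f$ is an automorphism of $X$ reduces to a one-line substitution, and the commutation with $T$ is automatic since $T(x)=x\ast x$ is defined intrinsically from the cycle set operation.

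To obtain the intertwining identity, I would combine two inputs from the preliminaries. First, the fundamental identity~\eqref{eq:fundamental_property_of_permutation_brace} gives $\lambda_f(\lambda_x) = \lambda_{\lambda_f(x)}$. Second, since $f\in \Fix(\G(X))$, \cref{pro:fix_of_brace_provides_automorphisms} identifies $\lambda_f$ on $\G(X)$ with the inner automorphism $g\mapsto f\circ g\circ f^{\circ -1}$ of $\G(X)^{\circ}$, and because the multiplicative operation of the permutation brace coincides with composition in $\Sym_X$, this equals $f\circ g\circ f^{-1}$ as permutations. Applied to $g=\lambda_x$, the two observations combine to $\lambda_{\lambda_f(x)} = f\circ \lambda_x\circ f^{-1}$ in $\Sym_X$; evaluating both sides at $\lambda_f(z)$ yields the intertwining identity.

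From here I would conclude: setting $w=\lambda_x^{-1}(y)$ and using $x\ast y=\lambda_x^{-1}(y)$, the intertwining identity gives $\lambda_f(x\ast y)=\lambda_f(w)=\lambda_{\lambda_f(x)}^{-1}(\lambda_f(y))=\lambda_f(x)\ast \lambda_f(y)$, so $\lambda_f$ is a homomorphism of cycle sets; bijectivity on $X$ is immediate since $\lambda_f\in\Sym_X$. For the second statement, for any cycle set automorphism $\alpha$ we have $\alpha(T(x))=\alpha(x\ast x)=\alpha(x)\ast\alpha(x)=T(\alpha(x))$, so in particular $\lambda_f$ centralizes $T$.

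There is no serious obstacle here; the only point demanding care is disentangling the several meanings of the symbol $\lambda_f$ — the $\lambda$-action on the brace $\G(X)$, the inner automorphism of $\G(X)^{\circ}$ to which it specializes under $f\in\Fix$, and the permutation $f$ itself acting on points of $X$ — all of which coincide in this setting thanks to the identification of brace multiplication in $\G(X)$ with composition in $\Sym_X$.
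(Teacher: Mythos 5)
Your proposal is correct and follows essentially the same route as the paper: both derive the conjugation identity $\lambda_{\lambda_f(x)} = f\circ\lambda_x\circ f^{-1}$ by combining \cref{pro:fix_of_brace_provides_automorphisms} with the fundamental identity \eqref{eq:fundamental_property_of_permutation_brace}, and then read off $\lambda_f(x\ast y)=\lambda_f(x)\ast\lambda_f(y)$ and the commutation with $T$ as the specialization $y=x$. The only difference is cosmetic (you intertwine with $\lambda_x$ and invert at the end, the paper conjugates $\lambda_x^{-1}$ directly).
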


\begin{proof}
    Using \cref{pro:fix_of_brace_provides_automorphisms}, we calculate for all $x,y \in X$:
    \[
    \lambda_f(x \ast y) = (\lambda_f \circ \lambda_x^{-1})(y) = \lambda_{{}^{\circ f}\lambda_x^{-1}} (\lambda_f(y)) = \lambda_{\lambda_f(\lambda_x^{-1})}(\lambda_f(y)) = \lambda_{\lambda_f(x)}^{-1}(\lambda_f(y)) = \lambda_f(x) \ast \lambda_f(y).
    \]
    In particular, $T(\lambda_f(x)) = \lambda_f(x) \ast \lambda_f(x) = \lambda_f(x \ast x) = \lambda_f(T(x))$, thus showing the other statement of the proposition.
\end{proof}

Later in this work, it will be highly useful that endocabling is a way of obtaining block systems on $X$:

\begin{pro} \label{pro:blocks_of_phi}
    Let $X$ be a cycle set and $\varphi \in \Endl(\G(X))$, then the equivalence relation given by
    \begin{equation} \label{eq:block_relation_of_phi}
    x \cabsim{\varphi} y \Leftrightarrow \varphi(\lambda_x) = \varphi(\lambda_y)
    \end{equation}
    is invariant under $\G(X)$, that is, for all $x,y \in X$, $g \in \G(X)$, the implication
    \[
    x \cabsim{\varphi} y \Rightarrow \lambda_g(x) \cabsim{\varphi} \lambda_g(y)
    \]
    holds.
\end{pro}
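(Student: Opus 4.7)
The plan is to chase the definitions using two ingredients already at our disposal: the fundamental identity \eqref{eq:fundamental_property_of_permutation_brace} for the permutation brace, which rewrites $\lambda_{\lambda_g(x)}$ as $\lambda_g(\lambda_x)$, and the defining property \eqref{eq:endocabling_condition} of a $\lambda$-endomorphism, which says that $\varphi$ commutes with the $\lambda$-action.

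First I would assume $x \cabsim{\varphi} y$, i.e. $\varphi(\lambda_x) = \varphi(\lambda_y)$, and fix an arbitrary $g \in \G(X)$. Then I would apply $\lambda_g$ to both sides to get $\lambda_g(\varphi(\lambda_x)) = \lambda_g(\varphi(\lambda_y))$. Using that $\varphi \in \Endl(\G(X))$, each side can be rewritten as $\varphi(\lambda_g(\lambda_x)) = \varphi(\lambda_g(\lambda_y))$.

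Finally, I would invoke the fundamental property of the permutation brace, namely $\lambda_g(\lambda_x) = \lambda_{\lambda_g(x)}$ and $\lambda_g(\lambda_y) = \lambda_{\lambda_g(y)}$, to rewrite this as $\varphi(\lambda_{\lambda_g(x)}) = \varphi(\lambda_{\lambda_g(y)})$, which is exactly the relation $\lambda_g(x) \cabsim{\varphi} \lambda_g(y)$.

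There is really no hard step here: the statement is a one-line consequence of the two identities mentioned above, so the only thing to be careful about is to use the \emph{full} $\lambda$-endomorphism hypothesis (not merely the relative one), since the element $g \in \G(X)$ is arbitrary rather than lying in $\varphi(\G(X))$.
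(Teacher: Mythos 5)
Your proof is correct and is essentially identical to the paper's: both apply $\lambda_g$ to the equality $\varphi(\lambda_x)=\varphi(\lambda_y)$, commute $\lambda_g$ past $\varphi$ via the $\lambda$-endomorphism property \eqref{eq:endocabling_condition}, and then use the fundamental identity \eqref{eq:fundamental_property_of_permutation_brace} to recognize $\varphi(\lambda_{\lambda_g(x)})=\varphi(\lambda_{\lambda_g(y)})$. Your remark that the full (rather than relative) $\lambda$-endomorphism hypothesis is needed because $g$ is arbitrary is a correct and worthwhile observation.
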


\begin{proof}
    If $x \cabsim{\varphi} y$, then
    \[
    \varphi(\lambda_{\lambda_g(x)}) = \varphi(\lambda_g(\lambda_x)) = \lambda_g(\varphi(\lambda_x)) = \lambda_g(\varphi(\lambda_y)) = \varphi(\lambda_g(\lambda_y)) = \varphi(\lambda_{\lambda_g(y)})
    \]
    which implies $\lambda_g(x) \cabsim{\varphi} \lambda_g(y)$.
\end{proof}

An (almost) immediate consequence of the definition of $\cabsim{\varphi}$ is the following:

\begin{pro}\label{pro:blocks_of_phi_are_retraction_classes}
    Let $X$ be a cycle set and $\varphi \in \Endl'(\G(X))$, then the equivalence classes of $X$ under $\cabsim{\varphi}$ are the retraction classes of $X_{\varphi}$.
\end{pro}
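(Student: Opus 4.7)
The plan is to directly unwind the retraction relation on $X_\varphi$ and match it against $\cabsim{\varphi}$. Since the proposition is essentially claimed to be immediate, I expect no serious obstacle, but care must be taken to distinguish equality of elements in the brace $\G(X)$ from equality as permutations of $X$.

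First, I would recall that the retraction classes of the cycle set $X_\varphi$ are by definition the equivalence classes of the relation $x \sim_{X_\varphi} y \Leftrightarrow \sigma_x^{X_\varphi} = \sigma_y^{X_\varphi}$, where $\sigma_x^{X_\varphi}(z) = x \cab{\varphi} z$. From the defining formula of the cabled operation, $\sigma_x^{X_\varphi}$ equals the permutation $\lambda_{\varphi(\lambda_x)}^{-1}$ of $X$, that is, the permutation inverse to the element $\varphi(\lambda_x) \in \G(X)$ acting on $X$.

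Next, I would invoke the notational convention from the preliminaries: for any $g \in \Sym_X$ one has $\lambda_g(x) = g(x)$, so that for elements of $\G(X) \leq \Sym_X$ the map $\lambda_g$ is just the permutation $g$ itself. Consequently, the equality $\lambda_{\varphi(\lambda_x)}^{-1} = \lambda_{\varphi(\lambda_y)}^{-1}$ of permutations of $X$ is equivalent to $\varphi(\lambda_x) = \varphi(\lambda_y)$ as elements of $\Sym_X$, and since $\G(X)$ is a \emph{subgroup} of $\Sym_X$ (not a mere quotient), this is in turn equivalent to $\varphi(\lambda_x) = \varphi(\lambda_y)$ inside the brace $\G(X)$.

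Chaining these equivalences, $x \sim_{X_\varphi} y$ iff $\sigma_x^{X_\varphi} = \sigma_y^{X_\varphi}$ iff $\varphi(\lambda_x) = \varphi(\lambda_y)$ iff $x \cabsim{\varphi} y$, which is the claim. The hypothesis $\varphi \in \Endl'(\G(X))$ is only used implicitly, via the previous proposition, to guarantee that $X_\varphi$ is a cycle set so that its retraction is defined; beyond that the argument is a pure definition chase.
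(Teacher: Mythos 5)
Your argument is correct and is essentially identical to the paper's proof: both unwind $x \cab{\varphi} z = \lambda_{\varphi(\lambda_x)}^{-1}(z)$ and use that $\G(X) \leq \Sym_X$ acts faithfully on $X$, so equality of the permutations $\lambda_{\varphi(\lambda_x)}^{-1}$ and $\lambda_{\varphi(\lambda_y)}^{-1}$ is equivalent to $\varphi(\lambda_x) = \varphi(\lambda_y)$ in the brace. Your explicit remark that faithfulness is the point being used is a welcome clarification of what the paper leaves implicit.
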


\begin{proof}
    Given $x,y \in X$, we have the equivalences
    \[
    x \cabsim{\varphi} y \Leftrightarrow \varphi(\lambda_x) = \varphi(\lambda_y) \Leftrightarrow \forall z \in X: \lambda^{-1}_{\varphi(\lambda_x)}(z) =  \lambda^{-1}_{\varphi(\lambda_y)}(z) \Leftrightarrow \forall z \in X: x \cab{\varphi} z = y \cab{\varphi} z. 
    \]
\end{proof}

The following result is a useful consequence that will play a striking role later. For example, it will allow us in proofs to replace central elements in $\G(X)$ by different central elements.

\begin{pro} \label{pro:replacement_of_center}
    Let $X$ be a cycle set such that $\G(X)$ is a $2$-group and let $z \in \Zen(\G(X))$ be such that $o_{\circ}(z) = 2$. Let $k \geq 1$ and $\varphi = \id - \lambda_z \in \Endl(\G(X))$. If $X_{k\varphi}$ is retractable and $d(X_{k\varphi})$  is divisible by $4$, then there is an element $z' \in \Zen(\G(X))$ with $z' \neq z$ and $o_{\circ}(z') = 2$.
\end{pro}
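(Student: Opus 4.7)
The plan is to exhibit $z'$ as a nonzero element of $\Fix(B) \cap \varphi(B) \cap \Soc(B)$, where $B = \G(X)$. Any such element is itself a suitable $z'$: for a nonzero $f$ in this intersection, $f \in \Fix(B) \cap \varphi(B)$ combined with \cref{pro:properties_of_phi_z}(b) forces $\lambda_z(f) = f = -f$, so $o_+(f) = 2$; $f \in \Soc(B)$ gives $f^{\circ 2} = f + \lambda_f(f) = 2f = 0$, so $o_\circ(f) = 2$; and $f \in \Fix(B) \cap \Soc(B)$ forces $f \circ g = f + g = g + f = g \circ f$ for all $g \in B$, so $f \in \Zen(B)$. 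Finally $f \neq z$ because $z \notin \varphi(B)$ by \cref{pro:cabling_out_the_center}, while $f \in \varphi(B)$.

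The first step will be easy: produce a nonzero element of $\Fix(B) \cap \varphi(B)$. The hypothesis $4 \mid d(X_{k\varphi}) = \exp_+(k\varphi(B))$ forces $k\varphi(B) \neq 0$, so $\varphi(B) \subseteq B$ is a nontrivial left ideal. Then $B^\circ$ is a $2$-group acting via $\lambda$ on the nontrivial $2$-subgroup $\varphi(B)$, and a standard $p$-group fixed-point argument (compare \cref{pro:p_braces_have_nontrivial_fix}) yields a nonzero $f_0 \in \Fix(B) \cap \varphi(B)$.

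The crux will be to upgrade $f_0$ to an element also lying in $\Soc(B)$; this is where both remaining hypotheses enter. Retractability of $X_{k\varphi}$ translates, via \cref{pro:blocks_of_phi_are_retraction_classes}, into the existence of distinct $x, y \in X$ with $k\varphi(\lambda_x) = k\varphi(\lambda_y)$, and by \cref{pro:blocks_of_phi} the relation $\cabsim{k\varphi}$ is $\G(X)$-invariant. The condition $4 \mid d(X_{k\varphi})$ furnishes an element $v \in k\varphi(B)$ with $o_+(v) \geq 4$, providing enough additive depth in $k\varphi(B)$. I would combine these by running an iterated fixed-point / minimality argument within $\Fix(B) \cap \varphi(B)$, using $v$ as an anchor together with the $\G(X)$-invariant collapse $\cabsim{k\varphi}$ to force the constructed element to have trivial $\lambda$-action not just on $\varphi(B)$ but on all of $B$.

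The main obstacle I foresee is precisely this socle-upgrade step: producing an element of $\Fix(B) \cap \varphi(B)$ that additionally satisfies $\lambda_f = \id_B$. Neither the pure fixed-point theorem nor retractability alone should suffice; it is specifically the interplay of retractability with the Dehornoy class divisibility by $4$ (via the element $v$) that provides the extra leverage needed to conclude $\lambda_f = \id_B$, and this is the step around which the proof will pivot.
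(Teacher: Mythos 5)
Your preliminary reductions are fine as far as they go: a nonzero $f \in \Fix(B) \cap \varphi(B) \cap \Soc(B)$ would indeed serve as $z'$, and the first step (a nonzero element of $\Fix(B) \cap \varphi(B)$, via the fixed-point theorem for $2$-groups acting on the nontrivial left ideal $\varphi(B)$) is unproblematic. The gap is the step you yourself identify as the crux: you never give an argument for upgrading such an element into $\Soc(B)$, only the intention to run ``an iterated fixed-point / minimality argument''. Worse, the target you are aiming at is far stronger than the proposition and is almost certainly not reachable this way. A nonzero element of $\Soc(\G(X))$ forces $X$ itself to be retractable by \cref{thm:socle_detects_retractability}, so your plan, if it worked, would prove that the hypotheses already imply retractability of $X$ — which is essentially the content of the entire induction in the section on $n = 2^v$. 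In the very place where the proposition is applied (\cref{lem:properties_of_and_reduction_to_xphi}e)), $X$ is assumed irretractable, hence $\Soc(\G(X)) = 0$ and your intersection is forced to be empty of nonzero elements there, even though the proposition must still produce a $z'$ in that setting before the contradiction with \cref{lem:center_of_counterexample} is drawn. So a correct proof cannot route through $\Soc(\G(X))$; the element $z'$ one obtains need not lie in the socle, in $\Fix(B)$, or in $\varphi(B)$.

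The paper's mechanism is different and avoids $\Soc(\G(X))$ entirely. Retractability of $X_{k\varphi}$ makes the $\G(X)$-invariant relation $\cabsim{k\varphi}$ nontrivial (\cref{pro:blocks_of_phi}, \cref{pro:blocks_of_phi_are_retraction_classes}), giving a homomorphism $\chi: \G(X) \to \Sym_{X/\cabsim{k\varphi}}$ whose kernel contains the \emph{nonzero} socle of the \emph{cabled} brace $\G(X_{k\varphi})$ — not of $\G(X)$. Since $\G(X)$ is a nilpotent $2$-group and $\ker(\chi)$ is a nontrivial normal subgroup, $\Zen(\G(X)) \cap \ker(\chi)$ contains an involution $z'$. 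The divisibility of $d(X_{k\varphi})$ by $4$ is then used only to show $z \notin \ker(\chi)$: picking $x$ with $4 \mid o_+(k\varphi(\lambda_x))$, one computes $k\varphi(\lambda_x) - k\varphi(\lambda_{\lambda_z(x)}) = 2k\varphi(\lambda_x) \neq 0$, so $x$ and $\lambda_z(x)$ lie in different blocks. This is how the two remaining hypotheses actually interact, and it is quite different from the ``extra leverage'' you were hoping they would provide for a socle upgrade.
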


\begin{proof}
    Consider the equivalence relation $\cabsim{k\varphi}$ from \cref{pro:blocks_of_phi}. As the equivalence classes of $\cabsim{k\varphi}$ are the retraction classes of $X_{k\varphi}$, our assumption on $X_{k\varphi}$ implies that $\cabsim{k\varphi}$ is a nontrivial equivalence relation. By means of \cref{pro:blocks_of_phi}, we observe that there is a well-defined action of $\G(X)$ on $X/\!\!\cabsim{k\varphi}$ given by $g \cdot [x] = [\lambda_g(x)]$ for $g \in \G(X)$, $x \in X$, where $[x]$ denotes the equivalence class of $x \in X$ under $\cabsim{k\varphi}$. This action is obviously compatible with the projection $X \to X/\!\!\cabsim{k\varphi}$, so we get a group homomorphism $\chi: \G(X) \to \Sym_{X/\!\cabsim{k\varphi}}$; $ \ g \mapsto ([x] \mapsto [\lambda_g(x)])$. By \cref{thm:socle_detects_retractability}, we see that $\Soc(\G(X_{k\varphi})) \neq 0$ and for all $g \in \Soc(\G(X_{k\varphi}))$, we have $[\lambda_g(x)] = [x]$ by \cref{thm:socle_detects_retractability}. This shows that $\Soc(\G(X_{k\varphi})) \leq \ker(\chi)$. On the other hand, $z \not\in \ker(\chi)$: let $x \in X$ be such that $o_+(k\varphi(\lambda_x))$ is divisible by $4$ then, using \cref{pro:properties_of_phi_z}a), we calculate
    \[
    k\varphi(\lambda_x) - k\varphi(\lambda_{\lambda_z(x)}) = k\varphi(\lambda_x - \lambda_z(\lambda_x)) = k\varphi^2(\lambda_x) = 2k\varphi(\lambda_x) \neq 0.
    \]
    so $k\varphi(\lambda_x) \neq k\varphi(\lambda_{\lambda_z(x)})$, which shows that $x \not \cabsim{k\varphi} \lambda_z(x)$ and thus, $z \not\in \ker(\chi)$. But $\G(X_{k\varphi}) \leq \G(X)$ is a $2$-group and therefore nilpotent, so $\Zen(\G(X)) \cap \ker(\chi) \neq 0$, in particular, there is an element $z' \in \Zen(\G(X)) \cap \ker(\chi)$ with $o_{\circ} (z') = 2$. As $z \not\in \ker(\chi)$, it follows that $z' \neq z$ and we are done!
\end{proof}

\begin{rem}
    We chose a cabling method that relies on the notion of $\lambda$-endomorphisms because they are most convenient to work with in later applications. However, the reader may have noticed that plenty of proofs in this section just rely on the fact that for $\varphi \in \Endl(\G(X))$, the identity $\varphi \circ \lambda_x = \lambda_x \circ \varphi$ is satisfied for all $x \in X$.

    This gives rise to the following generalization: let $X$ be a cycle set and $\Phi: X \to \G(X)$ a \emph{$\lambda$-intertwiner}, that is, a mapping that satisfies
    \[
    \Phi(\lambda_x(y)) = \lambda_x(\Phi(y))
    \]
    for all $x,y \in X$. It is straightforward to show that $\Phi(\lambda_g(y)) = \lambda_g(\Phi(y))$ for each $\lambda$-intertwiner $\Phi: X \to \G(X)$. Given a $\lambda$-intertwiner $\Phi$, the operation on $X$ can be deformed to
    \[
    x \cab{\Phi} y = \lambda_{\Phi(x)}^{-1}(y),
    \]
    and it can again be shown that $X_{\Phi} = (X, \cab{\Phi})$ is a cycle set. Note that endocabling by an element $\varphi \in \Endl(\G(X))$ is the same as cabling by the $\lambda$-intertwiner $x \mapsto \varphi(\lambda_x)$.

    In particular, given $\lambda$-intertwiners $\Phi, \Psi: X \to \G(X)$, it can easily be shown that their pointwise sum $\Phi + \Psi$ is again a $\lambda$-intertwiner, and the same line of arguments leading to the proof of \cref{pro:diagonals_are_homomorphisms}, shows that with $T_{\Phi} = T_{X_{\Phi}}$, we again have
    \[
    T_{\Phi + \Psi} = T_{\Phi} \circ T_{\Psi}.
    \]

    Note that for an indecomposable cycle set $X$, there is a very convenient way to describe its $\lambda$-intertwiners: picking $x_0 \in \G(X)$, it is easy to show that for $g \in \G(X)$, a $\lambda$-intertwiner $\Phi$ with $\Phi(x_0) = g$ exists if and only if their stabilizers are related by $\G(X)_{x_0} \leq \G(X)_g$. In this case, $\Phi$ can be defined without conflict by $\Phi(\lambda_h(x_0)) = \lambda_h(g)$ for $h \in \G(X)$. Call the resulting $\lambda$-intertwiner $\Phi_{x_0,g}$. One can check that $\Phi_{x_0,g+h} = \Phi_{x_0,g} + \Phi_{x_0,h}$ for all eligible $g,h \in \G(X)$.

    The condition $\G(X)_{x_0} \leq \G(X)_g$ can more conveniently be stated as
    \[
    g \in \mathcal{F}_{x_0} = \{ h \in \G(X) \ : \ \forall g \in \G(X)_{x_0}: \lambda_g(h) = h \},
    \]
    which is a subgroup of $\G(X)^+$. Therefore, cabling by intertwiners of indecomposable cycle sets provides a group homomorphism
    \[
    \mathcal{T}: \mathcal{F}_{x_0} \to \Sym_X; \ g \mapsto T_{\Phi_{x_0,g}}.
    \]
\end{rem}

\section{Irreducibility of cycle sets} \label{sec:irreducibility}

In this section, we want to investigate the concept of \emph{irreducibility} of cycle sets that will be of high significance in our study of cycle sets of size $n$ whose diagonal is an $n$-cycle.

\begin{defn}\label{defn:irreducibility}
    A cycle set $X$ is \emph{irreducible} if $\emptyset$ and $X$ are its only sub-cycle sets.
\end{defn}

Note that irreducible cycle sets are necessarily indecomposable as the decomposition classes of a cycle set are always sub-cycle sets thereof.

An important observation is that irreducibility is inherited by epimorphic images:

\begin{pro} \label{pro:irreducibility_is_inherited_by_epimorphic_images}
    Let $f: X \twoheadrightarrow Y$ be a surjective homomorphism of cycle sets where $X$ is irreducible, then $Y$ is also irreducible.
\end{pro}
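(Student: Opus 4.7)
The plan is to argue by contrapositive/pullback: given a non-empty sub-cycle set $Z \subseteq Y$, I would show that $f^{-1}(Z)$ is a non-empty sub-cycle set of $X$, and then use irreducibility of $X$ to conclude $f^{-1}(Z) = X$, whence $Z = f(X) = Y$ by surjectivity.

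The key step is verifying that $f^{-1}(Z)$ is a sub-cycle set of $X$. Closure under $\ast$ is immediate from $f$ being a homomorphism: if $x, y \in f^{-1}(Z)$ then $f(x \ast y) = f(x) \ast f(y) \in Z$ since $Z$ is closed under $\ast$ in $Y$, so $x \ast y \in f^{-1}(Z)$. For the bijectivity axiom \eqref{eq:C1_bijectivity} on the restricted operation, I would observe that $\sigma_x$ is already injective on $X$, hence its restriction to $f^{-1}(Z)$ is injective; since the paper's running convention demands that cycle sets are finite, any injection from a finite set to itself is a bijection, so $\sigma_x$ restricts to a bijection of $f^{-1}(Z)$. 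Axiom \eqref{eq:C2_cycloid_equation} is inherited from $X$, and \eqref{eq:C3_nondegeneracy} follows from \eqref{eq:C1_bijectivity} and \eqref{eq:C2_cycloid_equation} in the finite case by Rump's theorem (as already noted earlier in the excerpt).

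With $f^{-1}(Z)$ confirmed to be a sub-cycle set, surjectivity of $f$ and $Z \neq \emptyset$ give $f^{-1}(Z) \neq \emptyset$, so the irreducibility of $X$ forces $f^{-1}(Z) = X$. Applying $f$ yields $Z \supseteq f(f^{-1}(Z)) = f(X) = Y$, hence $Z = Y$. Since every non-empty sub-cycle set of $Y$ equals $Y$, $Y$ is irreducible.

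I do not expect any real obstacle here; the only point that requires a moment of care is checking that pulling back along a homomorphism preserves the sub-cycle set property, and this reduces to the finiteness convention. The argument is essentially the standard ``lattice-of-substructures'' argument valid for any algebraic structure defined by operations that $f$ respects.
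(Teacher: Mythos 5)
Your proposal is correct and follows exactly the paper's argument: pull back a nonempty sub-cycle set $Z \subseteq Y$ to $f^{-1}(Z)$, invoke irreducibility of $X$ to get $f^{-1}(Z) = X$, and conclude $Z = f(f^{-1}(Z)) = Y$ by surjectivity. The paper simply asserts that $f^{-1}(Z)$ is a sub-cycle set, whereas you verify this in detail (correctly, using the finiteness convention for the bijectivity axiom); the substance is the same.
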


\begin{proof}
    Let $Z \subseteq Y$ be a nonempty sub-cycle set, then $f^{-1}(Z)$ is a sub-cycle set of $X$ which implies $f^{-1}(Z) = X$. As $f$ is surjective, this implies $Z = f(f^{-1}(Z)) = f(X) = Y$.
\end{proof}

We continue with an easy observation that is a slight generalization of an observation in \cite[Theorem 3.5]{Vendramin_Ramirez_Decomposition}

\begin{pro} \label{pro:full_diagonal_implies_no_sub_cyclesets}
    If $X$ is a cycle set of size $n$ such that its diagonal $T$ is an $n$-cycle, then $X$ is irreducible.
\end{pro}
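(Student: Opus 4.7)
The plan is very short, because the statement is essentially an orbit argument once one observes that sub-cycle sets are automatically invariant under the diagonal map.

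First I would note that if $Y \subseteq X$ is a nonempty sub-cycle set, then for every $y \in Y$ we have $T(y) = y \ast y \in Y$, simply because $Y$ is closed under the $\ast$-operation. Hence $T(Y) \subseteq Y$. Since $X$ is finite and $T$ is a bijection (by \eqref{eq:C3_nondegeneracy}), the restriction $T|_Y$ is injective on a finite set, so $T(Y) = Y$; that is, $Y$ is $T$-stable.

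Next I would use the assumption that $T$ is an $n$-cycle on the set $X$ of size $n$. The nonempty $T$-invariant subsets of $X$ are exactly the unions of $T$-orbits, and since $T$ has the single orbit $X$ itself, the only such subset is $X$. Combined with the previous step, this forces $Y = X$ for every nonempty sub-cycle set $Y$, which is precisely the definition of irreducibility in \cref{defn:irreducibility}.

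There is no substantive obstacle: the only minor thing to check is the invariance of $Y$ under $T$, which is immediate from the definition of a sub-cycle set together with non-degeneracy and finiteness. No brace-theoretic machinery is needed at this point; indecomposability (which follows a posteriori from irreducibility) need not even be invoked.
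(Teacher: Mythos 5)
Your proof is correct and follows essentially the same route as the paper: both observe that a nonempty sub-cycle set $Y$ is closed under $T$ (since $T(y) = y \ast y \in Y$) and then use the fact that $T$ is an $n$-cycle to conclude $Y = X$. The paper phrases this as following the forward orbit $T^k(x)$ of a single point, while you phrase it via $T$-invariance of $Y$; the difference is purely cosmetic.
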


\begin{proof}
    Suppose that $Y \subseteq X$ is a nonempty sub-cycle set. Let $x \in Y$, then $T(x) = x \ast x \in Y$, as $Y$ is a sub-cycle set. Inductively, this proves that $T^k(x) \in Y$ for all $k \geq 0$ but as $T$ is an $n$-cycle, we conclude that $Y = X$.
\end{proof}

For the proof of the next proposition, we will need the following lemma.

\begin{lem} \label{lem:dpt_lemma}
    Let $B$ be a brace and $L$ a left ideal, then $\Fix_B(L) = \{ h \in B \ : \ \forall g \in L:  \lambda_g(h) = h \}$ is a subbrace of $B$.
\end{lem}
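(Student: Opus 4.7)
The plan is to verify separately that $\Fix_B(L)$ is closed under $+$ and under $\circ$. Additive closure is automatic: since each $\lambda_g$ belongs to $\Aut(B^+)$, the set of common fixed points of the family $\{\lambda_g\}_{g \in L}$ is necessarily a subgroup of $B^+$.

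For the multiplicative part, the key step is to establish the following normalization identity: for every $h \in \Fix_B(L)$ and $g \in L$, one has $h^{\circ -1} \circ g \circ h = \lambda_{h^{\circ -1}}(g) \in L$. I would derive this by using $h \in \Fix_B(L)$ to rewrite $g \circ h = g + h$, and then applying the brace equation to $h^{\circ -1} \circ (g + h) = h^{\circ -1} \circ g - h^{\circ -1} + h^{\circ -1} \circ h$; the last summand collapses to $0$, leaving $\lambda_{h^{\circ -1}}(g)$, which lies in $L$ by the left-ideal property. In other words, $\Fix_B(L)$ normalizes $L$ inside $B^{\circ}$, with conjugation realized via the $\lambda$-action.

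With this identity in hand, I would show that $\lambda_h$ preserves $\Fix_B(L)$ for every $h \in \Fix_B(L)$: writing $g \circ h = h \circ g'$ with $g' = \lambda_{h^{\circ -1}}(g) \in L$, one computes $\lambda_g(\lambda_h(h')) = \lambda_{h \circ g'}(h') = \lambda_h(h')$ for all $h' \in \Fix_B(L)$. Closure under $\circ$ then follows from $h_1 \circ h_2 = h_1 + \lambda_{h_1}(h_2)$, a sum of elements of $\Fix_B(L)$. A symmetric rewriting $g \circ h^{\circ -1} = h^{\circ -1} \circ \lambda_h(g)$ shows that $\lambda_{h^{\circ -1}}$ also preserves $\Fix_B(L)$, and the identity $h^{\circ -1} = -\lambda_{h^{\circ -1}}(h)$, extracted from $h \circ h^{\circ -1} = 0$, then yields closure under $\circ$-inverses. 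The hard part is spotting the normalization identity itself; once it is available, everything else reduces to routine brace bookkeeping, and in particular no finiteness assumption on $B$ is needed.
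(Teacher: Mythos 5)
Your argument is correct, and it is necessarily a different route from the paper's, because the paper does not prove this lemma at all: it simply cites \cite[Lemma 4.1]{drz_psquare}. Your proof is therefore a genuine self-contained addition. The additive part is indeed immediate (common fixed points of a family of automorphisms of $B^+$). The normalization identity $h^{\circ -1} \circ g \circ h = \lambda_{h^{\circ -1}}(g) \in L$ checks out exactly as you derive it, using $g \circ h = g + h$, the brace equation, and $h^{\circ -1}\circ h = 0$. The one step a reader should pause on is the symmetric rewriting $g \circ h^{\circ -1} = h^{\circ -1} \circ \lambda_h(g)$: this is equivalent to $\lambda_h(g) \circ h = h \circ g$, which holds because $u = \lambda_h(g)$ lies in $L$ (left-ideal property), so $u \circ h = u + h = (h\circ g - h) + h = h \circ g$ by the defining property of $\Fix_B(L)$ applied to $u$; it is worth spelling this out since the conjugation formula of \cref{pro:fix_of_brace_provides_automorphisms} is only stated for elements of the full fix. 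With that, $\lambda_h$ and $\lambda_{h^{\circ -1}}$ both preserve $\Fix_B(L)$, and closure under $\circ$ and under $\circ$-inverses follows from $h_1 \circ h_2 = h_1 + \lambda_{h_1}(h_2)$ and $h^{\circ -1} = -\lambda_{h^{\circ -1}}(h)$ as you say. Your observation that no finiteness is needed is also correct and is a small gain over an argument that would deduce inverse-closure from finiteness.
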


\begin{proof}
    This is a special case of \cite[Lemma 4.1]{drz_psquare}.
\end{proof}

We are now in the position to prove

\begin{pro} \label{pro:irreducible_braces_are_of_pi_type}
    Let $X$ be an irreducible cycle set, then $X$ is of $\pi$-type.
\end{pro}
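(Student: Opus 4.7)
The plan is to argue both inclusions of $\pi(|X|) = \pi(|\G(X)|)$. Since irreducibility implies indecomposability, $\G(X)$ acts transitively on $X$, so $|X|$ divides $|\G(X)|$ and the inclusion $\pi(|X|) \subseteq \pi(|\G(X)|)$ is immediate. For the reverse inclusion I proceed by contradiction: assume there exists $p \in \pi(|\G(X)|) \setminus \pi(|X|)$, set $B = \G(X)$, and consider the nontrivial left ideal $L = B_p$.

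The first key step is to show $L \subseteq \Soc(B)$. By \cref{lem:dpt_lemma}, $\Fix_B(L)$ is a subbrace of $B$, hence a sub-cycle set with respect to the cycle set structure of \cref{thm:cycle_set_on_a_brace}. Since $\Ret \colon X \to B$ is a homomorphism of cycle sets, $Y = \Ret^{-1}(\Fix_B(L))$ is a sub-cycle set of $X$. Because $|L|$ is a power of $p$ and $|X|$ is coprime to $p$, the usual orbit count for the action of the $p$-group $L$ on $X$ produces a fixed point; via the fundamental identity $\lambda_l(\lambda_x) = \lambda_{\lambda_l(x)}$, any such fixed point lies in $Y$, so $Y \neq \emptyset$. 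Irreducibility then forces $Y = X$, and since the elements $\lambda_x$ additively generate $B$, this yields $\lambda_l = \id_B$ for every $l \in L$, i.e.\ $L \subseteq \Soc(B)$.

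The second step upgrades $L$ from a mere left ideal to a normal subgroup of $B^{\circ}$. Since $+$ and $\circ$ coincide on $\Soc(B)$, the subgroup $L$ is exactly the Sylow $p$-subgroup of $\Soc(B)$, which is characteristic in $\Soc(B)^+ = \Soc(B)^{\circ}$; combined with the fact that $\Soc(B)$ is an ideal (and hence normal in $B^{\circ}$), this forces $L$ itself to be normal in $B^{\circ}$. Normality is precisely what is needed to see that the $L$-fixed set $F = \{x \in X : \lambda_l(x) = x \text{ for all } l \in L\}$ is a sub-cycle set: for $x, y \in F$ and $l \in L$, the conjugate $\lambda_x \circ l \circ \lambda_x^{-1}$ still lies in $L$ and therefore fixes $y$, from which a short calculation gives $\lambda_l(x \ast y) = x \ast y$. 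The set $F$ is nonempty by the same orbit argument as before, so irreducibility forces $F = X$. But then $L$ acts trivially on $X$, contradicting $L \leq \G(X) \leq \Sym_X$ together with $L \neq 0$.

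The main conceptual hurdle is the gap between $L$ being a left ideal (which is automatically $\lambda$-invariant) and being normal in $B^{\circ}$ (which is what makes $F$ into a sub-cycle set). The detour through $\Fix_B(L)$ and $\Soc(B)$ exists precisely to bridge this gap, after which irreducibility applied to $F$ delivers the contradiction.
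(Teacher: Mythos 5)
Your argument is correct and follows essentially the same route as the paper: both reduce to showing that the $p$-primary component $B_p$ lies in $\Soc(\G(X))$ via \cref{lem:dpt_lemma} and a fixed-point count for the $p$-group action, then upgrade $B_p$ to a normal subgroup through the socle and conclude it acts trivially. The only (harmless) differences are that you pull $\Fix_B(L)$ back to $X$ along $\Ret$ instead of working inside the irreducible image $\Ret(X)$, and you finish by applying irreducibility a second time to the fixed-point set $F$ where the paper invokes transitivity plus normality.
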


\begin{proof}
    As $\G(X)$ acts transitively on $X$, it is immediate that $\pi(|X|) \subseteq \pi(|\G(X)|)$. Therefore, in order to prove that $X$ is of $\pi$-type, it remains to show the inclusion $\pi(|\G(X)|) \subseteq \pi(|X|)$.
    
    Suppose that $q$ is a prime such that $q \nmid |X|$ and consider the left ideal $\G(X)_q$ Recall the definition of the homomorphism $\Ret: X \to \G(X)$; $x \mapsto \lambda_x$. By \cref{pro:irreducibility_is_inherited_by_epimorphic_images}, $\Ret(X)$ is irreducible as an epimorphic image of $X$. By \cref{lem:dpt_lemma}, $F = \Fix_{\G(X)}(\G(X)_q)$ is a subbrace of $\G(X)$ which implies that $\Ret(X) \cap F$ is a sub-cycle set of $\Ret(X)$. As $|\Ret(X)|$ divides $|X|$ (\cref{pro:surjections_of_indecomposable_cycle_sets}), we observe that $q \not\in \pi(|\Ret(X)|)$, which implies that $\G(X)_q$ has a fixed point on $\Ret(X)$, so $\Ret(X) \cap F$ is non-empty. As $\Ret(X)$ is irreducible, this implies that $\Ret(X) \cap F = \Ret(X)$ and thus, $\Ret(X) \subseteq F$. As $\G(X) = \gen{\Ret(X)}_+$, it follows that $\G(X)_q$ fixes all of $\G(X)$ which implies that $\G(X)_q \leq \Soc(\G(X))$. As $\G(X)_q$ is the Sylow-$q$ subgroup of $\Soc(\G(X))$, which is normal in $\G(X)$, we infer that $\G(X)_q$ is normal in $\G(X)$. But as $q \nmid |X|$, $\G(X)_q$ fixes at least one element of $X$ and, as $\G(X)_q$ is normal, it fixes all of $X$. This proves that $\G(X)_q = 1$ and thus, that $q \not\in \pi(|\G(X)|)$.
\end{proof}

Together with \cref{pro:full_diagonal_implies_no_sub_cyclesets}, this implies:

\begin{cor} \label{cor:full_n_cycle_implies_pi_type}
    Let $X$ be a cycle set of size $n$ whose diagonal $T$ is an $n$-cycle. Then $X$ is of $\pi$-type.
\end{cor}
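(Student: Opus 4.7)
The plan is essentially to chain together the two propositions just proved. First I would observe that the hypothesis that $T$ is an $n$-cycle on $X$ (with $n = |X|$) places us exactly in the setting of \cref{pro:full_diagonal_implies_no_sub_cyclesets}, so we immediately deduce that $X$ is irreducible. Then, since irreducibility of $X$ is precisely the hypothesis of \cref{pro:irreducible_braces_are_of_pi_type}, we conclude that $X$ is of $\pi$-type.

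Concretely, the argument is a two-line composition: $X$ irreducible $\Rightarrow$ $\pi(|X|) = \pi(|\G(X)|)$. There is no real obstacle here because all the work has been done in the two preceding propositions; the corollary is merely a convenient packaging. The only thing one might want to double-check is that the conventions match — namely that ``$T$ is an $n$-cycle'' is interpreted as $T$ being a full cycle on the $n$-element set $X$, which is how it is used in \cref{pro:full_diagonal_implies_no_sub_cyclesets} via the iteration $x, T(x), T^2(x), \ldots$ exhausting $X$ from any starting point $x$.

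Since there is nothing substantive beyond invoking the two prior results, the proof in the paper should just read: \emph{Combine \cref{pro:full_diagonal_implies_no_sub_cyclesets} and \cref{pro:irreducible_braces_are_of_pi_type}.} No further computations or constructions are needed.
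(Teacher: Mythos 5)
Your proposal is correct and matches the paper exactly: the corollary is stated immediately after \cref{pro:irreducible_braces_are_of_pi_type} with the phrase ``Together with \cref{pro:full_diagonal_implies_no_sub_cyclesets}, this implies,'' i.e.\ the paper's proof is precisely the two-step composition you describe.
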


\section{Cycle sets of odd prime-power size \texorpdfstring{$n$}{n} whose diagonal is an \texorpdfstring{$n$}{n}-cycle} \label{sec:full_cycle_odd_prime_power}

We will now make use of the endocabling machinery in order to gain insights into the structure of cycle sets of size $n$ whose diagonal is an $n$-cycle.

We first show that this property is preserved under epimorphic images:

\begin{pro} \label{pro:full_n_cycle_under_epimorphic_images}
Let $X$ be a cycle set of size $n$ whose diagonal $T_X$ is an $n$-cycle, and let $f: X \twoheadrightarrow Y$ be a surjection of cycle sets. If $m = |Y|$, then $m$ divides $n$ and $T_Y$ is an $m$-cycle.
\end{pro}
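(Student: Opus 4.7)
The plan is straightforward: show divisibility $m \mid n$ by invoking earlier general results about indecomposable cycle sets, then use equivariance of the surjection $f$ with respect to the diagonal map to transfer cyclicity of $T_X$ to $T_Y$.

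First I would observe that, since $T_X$ is an $n$-cycle, \cref{pro:full_diagonal_implies_no_sub_cyclesets} tells us $X$ is irreducible, hence in particular indecomposable. Then \cref{pro:surjections_of_indecomposable_cycle_sets} applied to the surjection $f\colon X \twoheadrightarrow Y$ immediately yields that all fibres of $f$ have the same cardinality and that $m = |Y|$ divides $n = |X|$. This disposes of the divisibility statement with no further work.

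For the second statement, the key point is that a homomorphism of cycle sets is equivariant with respect to diagonals: from $f(x \ast x) = f(x) \ast f(x)$ one reads off $f \circ T_X = T_Y \circ f$. Pick any $x_0 \in X$ and, using that $T_X$ is an $n$-cycle on $X$, write $X = \{x_0, T_X(x_0), T_X^2(x_0), \ldots, T_X^{n-1}(x_0)\}$. Equivariance then gives $T_Y^k(f(x_0)) = f(T_X^k(x_0))$ for every $k \geq 0$, so the $T_Y$-orbit of $f(x_0)$ equals
\[
\{f(T_X^k(x_0)) : 0 \leq k < n\} = f(X) = Y.
\]
Thus $T_Y$ acts transitively on the $m$-element set $Y$, and (being a permutation of $Y$ by nondegeneracy) it must therefore be a single $m$-cycle.

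There is essentially no obstacle here: the divisibility is a black-box application of a previously stated proposition, and the cyclicity of $T_Y$ is forced by the elementary observation that the image under an equivariant map of a single-orbit action is again a single orbit. No use of endocabling, the brace structure, or the hypothesis that $n$ be a prime power is needed, which is consistent with the fact that this proposition is being set up as a preparatory reduction before the prime-power hypothesis enters in \cref{sec:full_cycle_odd_prime_power}.
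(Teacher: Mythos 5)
Your proposal is correct and follows essentially the same route as the paper: divisibility via \cref{pro:surjections_of_indecomposable_cycle_sets} (the paper leaves the required indecomposability of $X$ implicit, whereas you justify it via irreducibility), and cyclicity of $T_Y$ via the equivariance $f \circ T_X^k = T_Y^k \circ f$ forcing a single $T_Y$-orbit on $Y$. No substantive difference.
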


\begin{proof}
     By \cref{pro:surjections_of_indecomposable_cycle_sets}, the divisibility statement is clear. Now observe that as $f$ is a homomorphism, $f(T_X(x)) = T_Y(f(x))$ holds for all $x \in X$. Furthermore, $f(T_X^k(x)) = T_Y^k(f(x))$ holds for arbitrary integers $k$. In order to show that $T_Y$ is an $m$-cycle, it is sufficient to show that for all $y,y' \in Y$, there is an integer $k$ such that $T_Y^k(y) = y'$. Pick $x,x' \in X$ with $f(x) = y$, $f(x') = y'$, then there is a $k$ such that $T_X^k(x) = x'$. But then, $y' = f(x') = f(T_X^k(x)) = T_Y^k(f(x)) = T_Y^k(y)$, thus proving the claim.
\end{proof}

\begin{thm} \label{thm:full_diagonal_implies_finite_mpl}
    Let $X$ be a cycle set of size $n = p^v$ where $p$ is an \emph{odd} prime and $v \geq 1$. If its diagonal $T$ is an $n$-cycle, then $X$ is retractable.
\end{thm}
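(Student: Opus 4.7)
The plan is a proof by contradiction. Suppose $X$ is irretractable; then $\Soc(\G(X))=0$ by \cref{thm:socle_detects_retractability}, and together with \cref{pro:center_and_fix_intersect_in_socle} this gives $\Fix(\G(X))\cap\Zen(\G(X))=0$. On the other hand, \cref{cor:full_n_cycle_implies_pi_type} shows $\G(X)$ is a $p$-brace, so $\Fix(\G(X))\neq 0$ by \cref{pro:p_braces_have_nontrivial_fix}, and the nontrivial center of the finite $p$-group provides $\Zen(\G(X))\neq 0$.

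The engine of the argument is endocabling by $\varphi=\id-\lambda_z$, where $z\in\Zen(\G(X))$ is chosen of multiplicative order $p$. First I would observe, via \cref{thm:diagonal_commutes_with_conjugated_diagonal}, that $T^{z}\in\langle T\rangle$, so $T^{z}=T^{k_z}$ for some integer $k_z$; checking that $z\mapsto k_z$ is a group homomorphism from $\Zen(\G(X))$ (with its $\circ$-structure) into $(\Z/n\Z)^{*}$ lets me conclude that its image lies in the Sylow-$p$ subgroup of the latter, which for odd $p$ consists of residues $\equiv 1\pmod p$. By \cref{pro:diagonals_are_homomorphisms} then $T_{\varphi}=T^{1-k_z}$ has $p$-power order that properly divides $n$; and crucially \cref{pro:cabling_out_the_center} gives $z\notin\G(X_{\varphi})=\varphi(\G(X))$, so the permutation brace of $X_{\varphi}$ is strictly smaller than $\G(X)$.

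From here the plan is to run an induction on $|\G(X)|$: pick $X$ with $|\G(X)|$ minimal among would-be counterexamples to the theorem, and derive a contradiction from the reduction above. The delicate part, and likely the main obstacle, is to extract from $X_{\varphi}$ a genuinely smaller instance of the theorem. The approach I would attempt is to descend to a $\G(X_{\varphi})$-orbit on $X$, equip it with an induced cycle set structure, and use \cref{pro:full_n_cycle_under_epimorphic_images} together with \cref{pro:blocks_of_phi_are_retraction_classes} (which identifies the blocks of $\cabsim{\varphi}$ with retraction classes of $X_{\varphi}$) to identify a sub-instance of size $p^{v'}$ with $v'<v$ carrying a full cycle diagonal, to which the inductive hypothesis applies. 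The oddness of $p$ is essential here: it forces $k_z\equiv 1\pmod p$, so that $1-k_z$ is a genuine nonzero multiple of $p$, ruling out the pathology that necessitates the much more involved analysis of \cref{sec:full_cycle_power_of_two} when $p=2$.
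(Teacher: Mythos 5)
Your setup is sound and overlaps with the paper's: both arguments reduce to showing that some $0\neq z\in\Zen(\G(X))$ lies in $\Fix(\G(X))$, whence $z\in\Soc(\G(X))$ by \cref{pro:center_and_fix_intersect_in_socle} and $X$ is retractable by \cref{thm:socle_detects_retractability}; and your observation that $T^z\in\gen{T}$, say $T^z=T^{k_z}$ with $k_z\equiv 1\pmod p$, is correct and is implicitly present in the paper. But the core of your argument --- the inductive descent via $X_{\varphi}$ with $\varphi=\id-\lambda_z$ --- is exactly the part you leave unproved, and it does not go through as described. Two concrete problems. First, the case $k_z=1$ (which, as it turns out, is the \emph{only} case that actually occurs) gives $T_{\varphi}=T^{1-k_z}=\id_X$, so there is no smaller full-cycle instance to recurse on; your descent has nothing to grab. (This case can in fact be closed directly: $k_z=1$ means $\lambda_z$ centralizes the $n$-cycle $T$, hence $\lambda_z\in\gen{T}$, hence $\lambda_z$ is a power of $\lambda_f$ for $f\in\Fix(\G(X))$ and $z\in\Fix(\G(X))$ --- but you do not make this argument.) Second, even when $k_z\neq 1$, the inductive hypothesis applied to a $\G(X_{\varphi})$-orbit $Y$ would at best yield retractability of $Y$, and neither ``$Y$ retractable $\Rightarrow X_{\varphi}$ retractable'' nor ``$X_{\varphi}$ retractable $\Rightarrow X$ retractable'' is automatic: the retraction classes of $X_{\varphi}$ are the blocks of $\cabsim{\varphi}$, i.e.\ $\varphi(\lambda_x)=\varphi(\lambda_y)$, which does not force $\lambda_x=\lambda_y$. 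Bridging this is precisely the hard content of \cref{sec:full_cycle_power_of_two} (via \cref{pro:replacement_of_center} and a uniqueness statement for central involutions), and nothing in your sketch supplies an analogue for odd $p$.

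The paper's proof needs no induction and no descent. It shows that $\lambda_z$ normalizes $\gen{T}$ (since $T^z\in\gen{T}$ by \cref{thm:diagonal_commutes_with_conjugated_diagonal}), hence acts as an element $g\in\Hol(\Z_{p^v})$ of order dividing $p$; by \cref{pro:centralizer_of_transitive_permutation_group_is_semiregular}, $g$ is fixed-point-free; and the elementary \cref{lem:fixed_point_free_elements_in_Hol(Zpv)_p_odd} then forces $g$ to be translation by a multiple of $p^{v-1}$ --- this is where oddness of $p$ enters, excluding the reflection $i\mapsto\beta-i$ that produces the genuine exception $X_{4,19}$ for $p=2$. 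Consequently $\lambda_z=\lambda_f^{\circ k}$ for $f\in\Fix(\G(X))$ of order $p$, so $z\in\Zen(\G(X))\cap\Fix(\G(X))\subseteq\Soc(\G(X))$, a contradiction. This Holomorph computation is the missing key step in your proposal; without it (or a completed, corrected descent), the argument has a genuine gap.
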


As a corollary, we deduce:

\begin{cor} \label{cor:full_diagonal_implies_finite_mpl}
    Let $X$ be a cycle set of size $n = p^v$ where $p$ is an \emph{odd} prime and $v \geq 0$. If its diagonal $T$ is an $n$-cycle, then $X$ is of finite multipermutation level.
\end{cor}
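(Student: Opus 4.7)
The plan is to prove this by induction on $v$, using \cref{thm:full_diagonal_implies_finite_mpl} as the engine at each step and \cref{pro:full_n_cycle_under_epimorphic_images} to propagate the hypothesis on the diagonal down the retraction tower.

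For the base case $v = 0$ we have $|X| = 1$, so $\mpl(X) = 0$ is finite by definition. For the inductive step, assume the statement holds for all exponents strictly less than $v \geq 1$, and let $X$ be a cycle set of size $n = p^v$ with diagonal $T$ an $n$-cycle. By \cref{thm:full_diagonal_implies_finite_mpl}, $X$ is retractable, so $|X_{\ret}| < |X| = p^v$.

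Next I would observe that the retraction map $X \twoheadrightarrow X_{\ret}$ is a surjective homomorphism of cycle sets and that $X$ is indecomposable (\cref{pro:full_diagonal_implies_no_sub_cyclesets} shows it is even irreducible). Therefore \cref{pro:surjections_of_indecomposable_cycle_sets} gives that $|X_{\ret}|$ divides $p^v$, so $|X_{\ret}| = p^w$ for some $w$ with $0 \leq w < v$. Applying \cref{pro:full_n_cycle_under_epimorphic_images} to the retraction surjection, the diagonal of $X_{\ret}$ is a $p^w$-cycle. By the induction hypothesis, $X_{\ret}$ has finite multipermutation level, say $\mpl(X_{\ret}) = k$. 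Since $X^{(i+1)} = (X_{\ret})^{(i)}$ for all $i \geq 0$, we conclude $\mpl(X) = k+1 < \infty$, completing the induction.

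There is no real obstacle here, as all the substantive work has been done in \cref{thm:full_diagonal_implies_finite_mpl} (to get a single step of retractability) and in \cref{pro:full_n_cycle_under_epimorphic_images} (to preserve the ``diagonal is a full cycle'' hypothesis under retraction). The only thing one has to check is that the induction is well-founded, which is guaranteed by the strict decrease of the exponent $w < v$ forced by retractability.
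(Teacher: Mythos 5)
Your proof is correct and follows essentially the same route as the paper: induct on $v$, use \cref{thm:full_diagonal_implies_finite_mpl} to get one step of retractability (hence the strict decrease of the exponent), and use \cref{pro:full_n_cycle_under_epimorphic_images} applied to the retraction map to propagate the full-cycle hypothesis. Your version is slightly more explicit than the paper's about where the divisibility and the strict inequality come from, but the argument is the same.
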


\begin{proof}
For $v = 0$, the statement is trivial. So suppose that $v > 0$ and that the statement is true for $v' < v$. Let $X$ be as in the statement of the corollary and consider the retraction homomorphism $\ret: X \twoheadrightarrow X_{\ret}$; $x \mapsto [x]$. By \cref{pro:full_n_cycle_under_epimorphic_images}, $|X_{\ret}| = p^{v'}$ for some $v' < v$ and $T_{X_{\ret}}$ is an $n'$-cycle where $n' = p^{v'}$. By the inductive assumption, $\mpl(X) = \mpl(X_{\ret})+1 < \infty$, thus proving the statement.
\end{proof}

We will make use of the following lemma. Recall that for a group $G$, its \emph{holomorph} $\Hol(G)$ is defined as the normalizer of $G$ in $\Sym_G$, where $G$ is identified with its image under the regular action. Note that $\Hol(G) \cong G \rtimes \Aut(G)$.

\begin{lem} \label{lem:fixed_point_free_elements_in_Hol(Zpv)_p_odd}
    Let $p$ be a prime and $v > 0$. Let $g \in \Hol(\Z_{p^v})$ be such that $g^p = 1$ and $g(x) \neq x$ for all $x$, then there is an $i$ with $0 < i < p$ such that $g(x) = x + i \cdot p^{v-1}$ for all $x \in \Z_{p^v}$.
\end{lem}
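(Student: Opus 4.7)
The plan is to use the semidirect product structure $\Hol(\Z_{p^v}) = \Z_{p^v} \rtimes (\Z/p^v)^*$ and write $g(x) = a + bx$ with $a \in \Z_{p^v}$ and $b \in (\Z/p^v)^*$. Iterating gives $g^k(x) = a(1 + b + \cdots + b^{k-1}) + b^k x$, so the condition $g^p = \id$ is equivalent to the pair of equations $b^p = 1$ in $(\Z/p^v)^*$ and $a \cdot S = 0$ in $\Z_{p^v}$, where $S = 1 + b + \cdots + b^{p-1}$. The condition that $g$ has a fixed point is the solvability of $(1 - b)x = a$.

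Since $p$ is odd, the Sylow-$p$ subgroup of $(\Z/p^v)^*$ is cyclic of order $p^{v-1}$, and its unique subgroup of order $p$ consists of $\{1 + ip^{v-1} : 0 \leq i < p\}$. Hence either $b = 1$ or $b = 1 + kp^{v-1}$ with $0 < k < p$ (the latter requiring $v \geq 2$). In the first case, $g$ is the translation $x \mapsto x + a$, and $pa = 0$ together with fixed-point-freeness forces $a = i \cdot p^{v-1}$ for some $0 < i < p$, giving the desired form directly.

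The main point is therefore to rule out the second case, and this is the step I expect to be the only real obstacle. Assuming $b = 1 + kp^{v-1}$ with $0 < k < p$ and $v \geq 2$, the key computation is to expand $S$ modulo $p^v$. Since $(kp^{v-1})^2 \equiv 0 \pmod{p^v}$, the binomial expansion gives $b^j \equiv 1 + jkp^{v-1} \pmod{p^v}$, so
\[
S \equiv p + \binom{p}{2} k p^{v-1} = p + \frac{p-1}{2} \cdot k \cdot p^v \equiv p \pmod{p^v},
\]
where the oddness of $p$ is crucially used to interpret $(p-1)/2$ as an integer and absorb the second summand. Then $a \cdot S = 0$ forces $p^{v-1} \mid a$; writing $a = m p^{v-1}$, the fixed-point equation $(1-b)x = a$ becomes $-k p^{v-1} x \equiv m p^{v-1} \pmod{p^v}$, which reduces to $-kx \equiv m \pmod{p}$ and is solvable because $k$ is a unit mod $p$. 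Thus $g$ would have a fixed point, a contradiction. Note that the oddness of $p$ enters precisely at the cancellation of the $\binom{p}{2}$-term: for $p = 2$ the analogous argument breaks down, as witnessed e.g. by $x \mapsto 1 - x$ on $\Z_{2^v}$.
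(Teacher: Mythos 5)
Your proof is correct and follows essentially the same route as the paper: parametrize $g(x)=\alpha x+\beta$ in the holomorph, use the iteration formula to force $\alpha\equiv 1\pmod{p^{v-1}}$, compute the geometric sum $S\equiv p$ using the oddness of $p$ to kill the $\binom{p}{2}$-term, and derive a fixed point in the case $\alpha\neq 1$. The only (cosmetic) differences are that the paper reduces to $k=1$ by passing to a power of $g$ and exhibits the explicit fixed point $-i$, whereas you keep general $k$ and argue solvability of the linear congruence $-kx\equiv m\pmod p$.
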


\begin{proof}
    Write $g(x) = \alpha x + \beta$ with $\alpha, \beta \in \Z_{p^v}$ where $\alpha$ is invertible in $\Z_{p^v}$. An standard inductive argument shows that for all $m \geq 0$, we have
    \begin{equation} \label{eq:power_of_f}
    g^m(x) = \alpha^m \cdot x + \left(\sum_{j=0}^{m-1} \alpha^j \right)\beta
    \end{equation}
    In particular, $\alpha$ has to satisfy $\alpha^p = 1$ which implies (\cite[Lemma 2.1.21]{cohen_nt}) that $\alpha = 1 + kp^{v-1}$ for some $0 \leq k < p$.

    If $\alpha = 1$, then $g(x) = x + \beta$ with $\beta \neq 0$. In this case, $g^p(x) = x + p\beta = x$, which implies $\beta = i \cdot p^{v-1}$ with $0 < i < p$.

    Now suppose that $\alpha = 1 + kp^{v-1}$ with $0 < k < p$. Replacing $g$ by an appropriate power, we may assume that $\alpha = 1 + p^{v-1}$. Another standard induction proves that
    \begin{equation} \label{eq:powers_of_alpha}
        (1 + p^{v-1})^j = 1 + jp^{v-1}
    \end{equation}
    for $j \geq 0$. With $\alpha = 1 + p^{v-1}$, we now calculate
    \[
    \sum_{j=0}^{p-1} \alpha^j = \sum_{j=0}^{p-1} 1+jp^{v-1} = p + \frac{p-1}{2}\cdot \underbrace{p p^{v-1}}_{=0} = p.
    \]
    Considering that $g^p = 1$, we plug this into \cref{eq:power_of_f} in order to obtain
    \[
    0 = \left(\sum_{j=0}^{m-1} \alpha^j \right)\beta = p \cdot \beta,
    \]
    which shows that $\beta = i \cdot p^{v-1}$ for some integer $i$. With this $i$, we get $g(x) = (1+p^{v-1})x + i \cdot p^{v-1}$. However, with this $g$, we also calculate
    \[
    g(-i) = (1+p^{v-1})(-i) + i \cdot p^{v-1} = -i,
    \]
    that is, $-i$ is a fixed point of $g$. We conclude that $\alpha = 1$, therefore proving the lemma.
\end{proof}

Furthermore, for reference, we recall the following well-known result about permutation groups. Recall that a permutation group $G \leq \Sym_X$ is \emph{semiregular} if the stabilizers are $G_x = \{ \id_X \}$ for all $x \in X$.

\begin{pro} \label{pro:centralizer_of_transitive_permutation_group_is_semiregular}
    Let $G \leq \Sym_X$ be a transitive permutation group, then $C_{\Sym_X}(G)$ acts semiregularly on $X$. In particular, if $g,h \in C_{\Sym_X}(G)$, then the equivalence $g \cdot x = h \cdot x \Leftrightarrow g = h$ holds for arbitrary $x \in X$.
\end{pro}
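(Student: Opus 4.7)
The plan is to prove the semiregularity statement directly from the transitivity of $G$ via the standard ``conjugate and transport'' argument, and then derive the ``in particular'' part as an easy corollary.

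First, I would let $c \in C_{\Sym_X}(G)$ be an element with a fixed point, say $c(x_0) = x_0$ for some $x_0 \in X$, and show that $c = \id_X$. Given any $y \in X$, transitivity of $G$ provides some $g \in G$ with $g \cdot x_0 = y$. Since $c$ commutes with $g$, I compute
\[
c(y) = c(g \cdot x_0) = g \cdot c(x_0) = g \cdot x_0 = y.
\]
As $y \in X$ was arbitrary, $c = \id_X$, which shows that the stabilizer in $C_{\Sym_X}(G)$ of any point is trivial, i.e.\ $C_{\Sym_X}(G)$ acts semiregularly on $X$.

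For the ``in particular'' statement, I would argue as follows: suppose $g,h \in C_{\Sym_X}(G)$ with $g \cdot x = h \cdot x$ for some $x \in X$. Then $h^{-1}g \in C_{\Sym_X}(G)$ (since centralizers are subgroups of $\Sym_X$), and $h^{-1}g$ fixes $x$. By the semiregularity just established, $h^{-1}g = \id_X$, hence $g = h$. The converse direction is trivial.

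There is no real obstacle here; the proof is a two-line transport-of-structure argument and the only subtlety worth flagging is that we use transitivity of $G$ (not of the centralizer) to move the fixed point of $c$ around the whole of $X$.
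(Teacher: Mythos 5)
Your proof is correct and follows essentially the same argument as the paper: take a centralizing element with a fixed point, use transitivity of $G$ to transport that fixed point to every point of $X$ via the commutation relation, and deduce the ``in particular'' part by applying semiregularity to $h^{-1}g$. Nothing is missing.
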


\begin{proof}
    Let $h \in C_{\Sym_X}(G)$ and $x \in X$ be such that $h \cdot x$. For any $y \in X$, pick $g \in G$ such that $g \cdot x = y$, then $h \cdot y = h \cdot (g \cdot x) = g \cdot (h \cdot x) = g \cdot x = y$. Therefore, $h = \id_X$. The second statement is an immediate consequence.
\end{proof}

We can now prove the main result of this subsection:

\begin{proof}[Proof of \cref{thm:full_diagonal_implies_finite_mpl}]
    Suppose that $X$ is irretractable. As its diagonal $T$ is an $n$-cycle, where $n=p^v$, we can write $X = \{ x_i : i \in \Z_{p^v} \}$ where $T(x_i) = x_{i+1}$. By \cref{cor:full_n_cycle_implies_pi_type}, we see that $\G(X)$ is a $p$-brace. As a consequence, $\Zen(\G(X)) \neq 0$ and $\Fix(\G(X)) \neq 0$ by \cref{pro:p_braces_have_nontrivial_fix}. Let $0 \neq f \in \Fix(\G(X))$ be such that $f^{\circ p} = 0$, then $T^f = T$ (\cref{pro:fix_provides_automorphisms}). As $T$ is an $n$-cycle, $\left\langle T \right\rangle$ is regular and abelian on $X$, so $\lambda_f \in \gen{T}$. As $\lambda_f \neq \id_X$, we can take a suitable power and assume that $\lambda_f(x_i) = x_{i + p^{v-1}}$.

    Now consider $0 \neq z \in \Zen(\G(X))$. By \cref{thm:diagonal_commutes_with_conjugated_diagonal}, we see that $T^z$ centralizes $T$ which, as before, implies $T^z \in \left\langle T \right\rangle$. Therefore, $\lambda_z$ normalizes $\gen{T}$ which implies that $\lambda_z(x_i) = x_{g(i)}$ for some $g \in \Hol(\Z_{p^v})$. Replacing $z$ by a suitable power, we can assume that $\lambda_z^p = \id_X$, and thus, $g^p = \id_X$. As a central element of a transitive permutation group, $\lambda_z$ does not have any fixed points on $X$ by \cref{pro:centralizer_of_transitive_permutation_group_is_semiregular}. As a consequence, $g$ is fixed-point-free. As $g^p = \id_X$, \cref{lem:fixed_point_free_elements_in_Hol(Zpv)_p_odd} now implies that $g(i) = i + k \cdot p^{v-1}$ with $0 < k < p$. Therefore, $\lambda_z(x_i) = x_{i+k \cdot p^{v-1}} = \lambda_f^k(x_i)$ for all $x_i \in X$. Therefore $z = f^{\circ k} \in \Fix(\G(X))$. By \cref{pro:center_and_fix_intersect_in_socle}, $z \in \Zen(\G(X)) \cap \Fix(\G(X)) \leq \Soc(\G(X))$, which by \cref{thm:socle_detects_retractability} implies that $X$ is retractable.
    \end{proof}

    \section{Cycle sets of size \texorpdfstring{$n=2^v$}{n=2v} whose diagonal is an \texorpdfstring{$n$}{n}-cycle} \label{sec:full_cycle_power_of_two}

    Note that \cref{thm:full_diagonal_implies_finite_mpl} fails to be true if $p=2$: in this case, there is a a unique irretractable cycle set of size $4$ such that $T$ is a $4$-cycle. In the GAP library \texttt{YangBaxter}, it is listed as \texttt{SmallCycleSet(4,19)}.
    Up to relabelling of elements, its multiplication table is given as follows:
    \begin{center}
        \begin{tabular}{c|c c c c}
        $\ast$ & 1 & 2 & 3 & 4 \\ \hline
        1 & 2 & 1 & 3 & 4 \\ 2 & 4 & 3 & 1 & 2 \\ 3 & 1 & 2 & 4 & 3 \\ 4 & 3 & 4 & 2 & 1
        \end{tabular}
    \end{center}
    Consequently, in the following, we will refer to this cycle set as $X_{4,19}$.

However, we can prove that \cref{thm:full_diagonal_implies_finite_mpl} is \emph{almost} true for $p=2$! Whereas \cref{thm:full_diagonal_implies_finite_mpl} only makes use of \cref{thm:diagonal_commutes_with_conjugated_diagonal} which can be proven with a little bit of effort without resorting to endocabling, we will use the full power of the endocabling machine in the proof of the following theorem:

\begin{thm} \label{thm:full_diagonal_implies_retractable_2_type}
    Let $X$ be a cycle set of size $n = 2^v$ where $v > 2$. If its diagonal $T$ is an $n$-cycle, then $X$ is retractable.
\end{thm}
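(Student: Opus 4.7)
The plan is to argue by contradiction: suppose $X$ is irretractable and identify $X=\{x_i:i\in\Z_{2^v}\}$ with $T(x_i)=x_{i+1}$. By \cref{cor:full_n_cycle_implies_pi_type}, $\G(X)$ is a $2$-brace, so \cref{pro:p_braces_have_nontrivial_fix} yields a non-zero $f\in\Fix(\G(X))$ with $f^{\circ 2}=0$. Irretractability and \cref{thm:socle_detects_retractability} give $\Soc(\G(X))=0$, so $\lambda_f\neq\id_X$; by \cref{pro:fix_provides_automorphisms} and the fact that $C_{\Sym_X}(T)=\langle T\rangle$, this forces $\lambda_f=T^{2^{v-1}}$. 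Moreover $f\notin\Zen(\G(X))$ by \cref{pro:center_and_fix_intersect_in_socle}.

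The next step is to classify the central involutions of $\G(X)$. Pick $z\in\Zen(\G(X))$ with $o_{\circ}(z)=2$; then $z\neq f$. By \cref{thm:diagonal_commutes_with_conjugated_diagonal}, $\lambda_z$ normalises $\langle T\rangle$, and by \cref{pro:centralizer_of_transitive_permutation_group_is_semiregular} it is fixed-point-free, so $\lambda_z(x_i)=x_{\alpha i+\beta}$ with $\alpha^2\equiv 1\pmod{2^v}$ and $(\alpha+1)\beta\equiv 0\pmod{2^v}$. The hypothesis $v>2$ enters here for the first time: among the four square roots of $1$ modulo $2^v$, namely $\pm 1$ and $\pm 1+2^{v-1}$, the two values $\pm 1+2^{v-1}$ always yield fixed points, and $\alpha=1$ forces $\lambda_z=\lambda_f$ hence $z=f$, a contradiction. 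Thus $\alpha=-1$ and $\beta$ is odd. The same case analysis applied to the product $z\circ z'$ of two distinct central involutions shows that $\lambda_{z\circ z'}$ is a translation of order dividing $2$, forcing $\beta-\beta'=2^{v-1}$; then $\lambda_{z\circ z'}=\lambda_f$, the injection $\G(X)\hookrightarrow\Sym_X$ yields $z\circ z'=f$, and this contradicts $f\notin\Zen(\G(X))$. Hence $z$ is the unique central involution.

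Now set $\varphi=\id-\lambda_z\in\Endl(\G(X))$. A direct computation of $T^z$ yields $T^z=T^{-1}$, so by \cref{pro:diagonals_are_homomorphisms} $T_\varphi=T\circ(T^z)^{-1}=T^2$; by \cref{thm:oT_divides_dX}, $d(X_\varphi)$ is divisible by $o(T^2)=2^{v-1}\geq 4$, which is the second essential use of $v>2$. Consequently, once $X_\varphi$ is shown to be retractable, \cref{pro:replacement_of_center} applied with $k=1$ produces $z'\in\Zen(\G(X))$ with $z'\neq z$ and $o_{\circ}(z')=2$, contradicting the uniqueness established above and completing the proof.

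The main obstacle is therefore proving that $X_\varphi$ is retractable. I would exploit two commuting fixed-point-free involutions of $X$: the pairing $x_i\mapsto x_{-i+\beta}$ induced by $\lambda_z$, which yields the symmetry $\varphi(\lambda_{x_i})=-\varphi(\lambda_{x_{-i+\beta}})$, and the pairing $x_i\mapsto x_{i+2^{v-1}}$ induced by $\lambda_f$. Since $z\circ f=f\circ z$, $\lambda_f$ commutes with $\varphi$ and therefore descends (mimicking the argument of \cref{pro:fix_provides_automorphisms}) to a cycle-set automorphism of $X_\varphi$ with the same pairing structure. Combining these two commuting pairings with the identity $\varphi^2=2\varphi$ from \cref{pro:properties_of_phi_z}, one should be able to force $2\varphi(\lambda_{x_i})=0$ for some $i$, which via the first symmetry collapses $x_i$ and $x_{-i+\beta}$ into the same $\cabsim{\varphi}$-class. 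Should a one-shot argument not succeed, the fall-back is to iterate: by \cref{pro:cabling_out_the_center} $\G(X_\varphi)$ is a non-trivial $2$-brace strictly smaller than $\G(X)$, so \cref{pro:p_braces_have_nontrivial_fix} supplies a central involution $z_1\in\Zen(\G(X_\varphi))$; the composed relative endomorphism $(\id-\lambda_{z_1})\circ\varphi\in\Endl'(\G(X))$ provided by \cref{pro:basics_for_lambda_endomorphisms}(c) refines the block structure, and finitely many such iterations — each step eliminating a central element while the Dehornoy class stays divisible by $4$ thanks to $v>2$ — must eventually exhibit the required collapse.
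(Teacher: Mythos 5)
Your opening moves track the paper's faithfully: the identification $\lambda_f = T^{2^{v-1}}$ for a fixed involution $f$, the classification of central involutions via the fixed-point-free elements of $\Hol(\Z_{2^v})$, the uniqueness of the central involution $z$ with $\lambda_z(x_i)=x_{\beta-i}$, the computations $T^z=T^{-1}$ and $T_\varphi=T^2$, and the reduction ``if $X_\varphi$ is retractable, then \cref{pro:replacement_of_center} produces a second central involution, contradiction'' are exactly \cref{lem:fix_of_2_type}, \cref{lem:center_of_counterexample} and \cref{lem:properties_of_and_reduction_to_xphi}. But you have correctly labelled the remaining step as ``the main obstacle'' and then not overcome it, and the two ideas you offer for it do not work. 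The one-shot idea --- forcing $2\varphi(\lambda_{x_i})=0$ for some $i$ so that the symmetry $\varphi(\lambda_{x_{\beta-i}})=-\varphi(\lambda_{x_i})$ collapses two retraction classes of $X_\varphi$ --- is provably impossible: by \cref{thm:oT_divides_dX} and $T_\varphi=T^2$, every $\varphi(\lambda_{x_i})$ has additive order divisible by $2^{v-1}\geq 4$ (indeed they all have equal order once $X_\varphi$ is known to be indecomposable), so no $\varphi(\lambda_{x_i})$ is annihilated by $2$. In fact, within the counterexample scenario the conclusion of \cref{lem:properties_of_and_reduction_to_xphi}e) is precisely that $X_\varphi$ is \emph{irretractable}, so ``show $X_\varphi$ is retractable'' is not a lemma on the way to the contradiction --- it is equivalent to the entire remaining difficulty.

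The fall-back (``iterate relative endocablings $(\id-\lambda_{z_1})\circ\varphi$ until the blocks collapse'') is the right general direction --- the paper does pass to $(X_\varphi)_{\varphi'}$ and to $X_{4\varphi}$ --- but ``finitely many iterations must eventually exhibit the collapse'' is an expression of hope, not an argument, and the actual execution requires machinery your sketch never touches. Concretely, the paper's proof is an induction on $v$ with computer-verified base cases $v\in\{3,4\}$ (\cref{lem:start_the_induction}); the induction hypothesis for size $2^{v-1}$ is indispensable in \cref{lem:ingredients_for_induction_step} to prove that $X_\varphi$ is irreducible, that $\lambda_{2^{k-1}z}$ is fixed-point-free, and that $\gen{z}_+$ acts semiregularly on $X$; \cref{lem:counterexample_has_k=v-1} then pins down $o_+(z)=2^{v-1}$ by cabling out the center; and the final contradiction requires choosing $z'\in\Zen(\G(X_\varphi))$ so that $\gen{\gen{z}_+\cup\{z'\}}_\circ$ acts transitively, showing this is an abelian subbrace, and computing the unit $m'$ by which $\lambda_{z'}$ acts on $\G(X_{2\varphi})$ before concluding that $X_{4\varphi}$ is retractable. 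None of these steps is present or replaceable by the symmetry observations you list, so the proposal as written has a genuine gap at its central point.
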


By a similar argument as in the proof of \cref{cor:full_diagonal_implies_finite_mpl}, we obtain the following result:

\begin{cor}
    Let $X$ be a cycle set of size $n = 2^v$ where $v \geq 0$. If its diagonal $T$ is an $n$-cycle, then exactly one of the following cases occurs:
    \begin{enumerate}
        \item $\mpl(X) < \infty$,
        \item There is a $k \geq 0$ such that $X^{(k)} \cong X_{4,19}$.
    \end{enumerate}
\end{cor}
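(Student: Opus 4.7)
The plan is to mimic the inductive argument used in the proof of \cref{cor:full_diagonal_implies_finite_mpl}, but with \cref{thm:full_diagonal_implies_retractable_2_type} playing the role of \cref{thm:full_diagonal_implies_finite_mpl} as the retractability engine once $v \geq 3$. First I would settle the base cases $v \in \{0,1,2\}$ by hand. For $v \leq 1$ we have $|X| \leq 2$, and a direct inspection (when $|X|=2$, the hypothesis that $T$ is a $2$-cycle forces $\sigma_{x_0}(x_0) = x_1$ and $\sigma_{x_1}(x_1) = x_0$, whence $\sigma_{x_0} = \sigma_{x_1}$, so $X$ retracts onto a point) shows $\mpl(X) < \infty$. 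For $v = 2$, either $X$ is irretractable, in which case the uniqueness remark immediately preceding \cref{thm:full_diagonal_implies_retractable_2_type} forces $X \cong X_{4,19}$, giving case (2) with $k=0$; or $X$ is retractable, and then \cref{pro:full_n_cycle_under_epimorphic_images} combined with \cref{pro:surjections_of_indecomposable_cycle_sets} yields $|X_{\ret}| \in \{1,2\}$ with diagonal still a full cycle, whence $\mpl(X) = \mpl(X_{\ret}) + 1 < \infty$ by the already-settled smaller cases.

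For the inductive step $v \geq 3$, \cref{thm:full_diagonal_implies_retractable_2_type} guarantees that $X$ is retractable, and \cref{pro:full_n_cycle_under_epimorphic_images} shows that $X_{\ret}$ is a cycle set of size $2^{v'}$ for some $v' < v$ whose diagonal is still a full $2^{v'}$-cycle. Applying the inductive hypothesis to $X_{\ret}$ yields two possibilities: either $\mpl(X_{\ret}) < \infty$, giving $\mpl(X) = \mpl(X_{\ret}) + 1 < \infty$ and placing $X$ in case (1); or there exists $k' \geq 0$ with $(X_{\ret})^{(k')} \cong X_{4,19}$, which translates to $X^{(k'+1)} \cong X_{4,19}$ and places $X$ in case (2) with $k = k'+1$.

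For mutual exclusivity, I would observe that $X_{4,19}$ is irretractable, so whenever $X^{(k)} \cong X_{4,19}$ for some $k$, we also have $X^{(m)} \cong X_{4,19}$ for all $m \geq k$, so $|X^{(m)}| = 4$ for all $m \geq k$ and thus $\mpl(X) = \infty$, ruling out case (1). I do not expect serious obstacles here: the only external input beyond the induction machinery is the uniqueness of $X_{4,19}$, which is asserted (and presumably verified via \GAP) in the text preceding the corollary.
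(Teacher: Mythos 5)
Your proposal is correct and follows essentially the same route the paper intends: the paper only says the corollary follows ``by a similar argument as in the proof of \cref{cor:full_diagonal_implies_finite_mpl}'', i.e.\ induction on $v$ with \cref{thm:full_diagonal_implies_retractable_2_type} supplying retractability for $v\geq 3$, \cref{pro:full_n_cycle_under_epimorphic_images} passing the full-cycle hypothesis to $X_{\ret}$, and the uniqueness of the irretractable size-$4$ example handling the exceptional branch. Your explicit treatment of the base cases and of mutual exclusivity (via irretractability of $X_{4,19}$) fills in exactly the details the paper leaves implicit.
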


We will first prove a lemma that is similar to \cref{lem:fixed_point_free_elements_in_Hol(Zpv)_p_odd}. Before formulating the lemma, we remark that we will slightly abuse language: namely, we call an element of $i \in \Z_{2n}$ \emph{even} resp. \emph{odd} if it is represented in $\Z$ by an even resp. odd integer. Observe that this is independent of the choice of a representative!

\begin{lem} \label{lem:fixed_point_free_elements_in_Hol(Z2v)}
    Let $v > 0$ and let $g \in \Hol(\Z_{2^v})$ be such that $g^2 = 1$ and $f(x) \neq x$ holds for all $x$, then one of the following cases holds:
    \begin{enumerate}
        \item $g(x) = x + 2^{v-1}$.
        \item There is an odd $\beta \in \Z_{2^v}$ such that $g(x) = \beta - x$.
    \end{enumerate}
\end{lem}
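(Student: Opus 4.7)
The plan is to write $g$ explicitly in affine form and then do a case analysis on the ``linear part''. Since $\Hol(\Z_{2^v}) \cong \Z_{2^v} \rtimes \Aut(\Z_{2^v})$ and $\Aut(\Z_{2^v}) \cong (\Z_{2^v})^\times$ acts by multiplication, every element can be written uniquely as $g(x) = \alpha x + \beta$ with $\alpha \in (\Z_{2^v})^\times$ (equivalently, $\alpha$ odd) and $\beta \in \Z_{2^v}$. A direct computation gives $g^2(x) = \alpha^2 x + (\alpha+1)\beta$, so the hypothesis $g^2 = 1$ translates into the two conditions $\alpha^2 = 1$ in $\Z_{2^v}$ and $(\alpha + 1)\beta = 0$ in $\Z_{2^v}$.

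Next I would enumerate the solutions to $\alpha^2 = 1$ in $\Z_{2^v}$. It is a standard fact that for $v \leq 2$ the solutions are $\alpha \in \{\pm 1\}$, while for $v \geq 3$ one has exactly four solutions $\alpha \in \{1,\,-1,\,2^{v-1}+1,\,2^{v-1}-1\}$. I treat the four cases separately, each time using $(\alpha+1)\beta = 0$ to restrict $\beta$ and then checking fixed-point-freeness via the equation $(\alpha-1)x = -\beta$.

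\emph{Case $\alpha = 1$:} the constraint becomes $2\beta = 0$, so $\beta \in \{0, 2^{v-1}\}$; fixed-point-freeness rules out $\beta = 0$ and leaves conclusion (1). \emph{Case $\alpha = -1$:} the constraint $(\alpha+1)\beta = 0$ is automatic, and $g(x) = x$ becomes $2x = \beta$, which has a solution in $\Z_{2^v}$ if and only if $\beta$ is even; so fixed-point-freeness is exactly the oddness of $\beta$, giving conclusion (2). \emph{Cases $\alpha = 2^{v-1} \pm 1$ (for $v \geq 3$):} one checks that $\alpha \pm 1$ both have $2$-adic valuation exactly $1$, so the constraint $(\alpha+1)\beta = 0$ forces $\beta$ to be even (in the case $\alpha = 2^{v-1}-1$) or $\beta \in \{0, 2^{v-1}\}$ (in the case $\alpha = 2^{v-1}+1$); in either sub-case the equation $(\alpha-1)x = -\beta$ admits a solution in $\Z_{2^v}$, producing a fixed point and contradicting the hypothesis. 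Thus these cases do not occur.

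The proof is essentially a routine case-check; the only step requiring care is the 2-adic valuation computation for $\alpha = 2^{v-1} \pm 1$, where one must verify both that $(\alpha+1)\beta = 0$ forces $v_2(\beta) \geq 1$ and that the resulting $\beta$ still allows the fixed-point equation $(\alpha-1)x = -\beta$ to be solved. Once those two cases are eliminated, only the cases $\alpha = 1$ and $\alpha = -1$ survive, which are exactly conclusions (1) and (2) respectively.
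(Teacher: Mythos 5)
Your proof is correct and follows essentially the same route as the paper's: write $g$ in affine form $g(x)=\alpha x+\beta$, reduce $g^2=1$ to $\alpha^2=1$ and $(\alpha+1)\beta=0$, and eliminate the two exotic square roots of unity $\alpha = 2^{v-1}\pm 1$ (for $v\geq 3$) by producing a fixed point. One small slip: your claim that for these $\alpha$ ``both $\alpha+1$ and $\alpha-1$ have $2$-adic valuation exactly $1$'' is false --- in each case exactly one of the two equals $2^{v-1}$ and so has valuation $v-1$ --- but the constraints on $\beta$ you then derive ($\beta \in \{0,2^{v-1}\}$ for $\alpha = 2^{v-1}+1$, $\beta$ even for $\alpha=2^{v-1}-1$) and the solvability of the fixed-point equation $(\alpha-1)x=-\beta$ are the correct ones, so the argument stands. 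The only (cosmetic) difference from the paper is that it exhibits explicit fixed points ($-i$, resp.\ $-ij$) instead of arguing solvability via valuations.
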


\begin{proof}
    For $v = 1$, we can easily check the validity of the lemma by hand. Now, let $v > 1$ and write $g(x) = \alpha x + \beta$ with $\alpha, \beta \in \Z_{2^v}$ where $\alpha$ is invertible. As in the proof of \cref{lem:fixed_point_free_elements_in_Hol(Zpv)_p_odd}, we argue that $\alpha^2 = 1$ which implies that $\alpha \in \{ \pm 1, 2^{v-1}\pm 1 \}$ (see \cite[Lemma 2.1.21]{cohen_nt}).

    If $\alpha = 1$, then $g(x) = x + \beta$ and $g^2(x) = x + 2 \beta = x$, which, considering that $g$ is fixed-point-free, implies $\beta = 2^{v-1}$. On the other hand, if $\alpha = -1$, then $g(x) = \beta - x$, which satisfies $g^2(x) = x$ unconditionally. It is easily seen that $x$ is a fixed point of $g$ if $2x = \beta$ which shows that $g$ is fixed-point-free if and only if $\beta$ is odd.

    Note that for $v = 2$, the cases $\alpha = \pm 1$ already cover all possible cases, so we will now assume that $v > 2$.
    
    If $\alpha = 2^{v-1}+ 1$, then $g^2(x) = x + (2^{v-1} + 2)\beta = x$. As $v > 2$, we see that $2^{v-1} + 2$ is divisible by $2$ but not by $4$ which implies $\beta = i \cdot 2^{v-1}$ for some integer $i$. But if $g(x) = (2^{v-1}+1)x + i \cdot 2^{v-1}$, we get
    \[
    g(-i) = (2^{v-1}+1)(-i) + i \cdot 2^{v-1} = -i,
    \]
    i.e. $g$ is not fixed-point-free. On the other hand, if $g(x) = (2^{v-1} - 1)x + \beta$, then $g^2(x) = x + 2^{v-1}\beta = x$, which implies that $\beta = 2i$ for some integer $i$. As $2^{v-1}-2$ is divisible by $2$ but not by $4$, there is a $j \in \Z_{2^v}$ such that $(2^{v-1}-2)j = 2$. Now
    \[
    g(-ij) = (2^{v-1}-1) \cdot (-ij) + 2i = -ij - (2^{v-1}-2) \cdot ij + 2i = -ij -2i + 2i = -ij, 
    \]
    again showing that $g$ has fixed points. As a consequence, $\alpha = \pm 1$, thus proving the lemma.
\end{proof}

We can now start with the proof our second main result. The proof of \cref{thm:full_diagonal_implies_retractable_2_type} will be by an inductive argument. In order to increase readability of the proof, we decided to outsource several arguments in the induction step by stating them as separate lemmata.

\begin{lem} \label{lem:fix_of_2_type}
    Suppose that $v \geq 1$ and that $X = \{ x_i : i \in \Z_{2^v} \}$ is an irretractable cycle set with diagonal $T(x_i) = x_{i+1}$. Let $f \in \Fix(\G(X))$ with $o_{\circ}(f) = 2$, then $\lambda_f(x_i) = x_{i + 2^{v-1}}$ for $x_i \in X$.
\end{lem}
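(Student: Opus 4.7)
The plan is to identify $\lambda_f$ as the unique element of order $2$ in $\langle T \rangle$. First, since $f \in \Fix(\G(X))$, \cref{pro:fix_provides_automorphisms} yields that $\lambda_f$ is a permutation of $X$ commuting with $T$; that is, $\lambda_f \in C_{\Sym_X}(T)$.

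Because $T$ is an $n$-cycle on an $n$-element set, $\langle T \rangle$ is cyclic of order $n = 2^v$ and acts transitively on $X$. Applying \cref{pro:centralizer_of_transitive_permutation_group_is_semiregular} to the transitive subgroup $\langle T \rangle$, the centralizer $C_{\Sym_X}(\langle T \rangle) = C_{\Sym_X}(T)$ is semiregular, hence of order at most $n$; since it already contains $\langle T \rangle$, we conclude $C_{\Sym_X}(T) = \langle T \rangle$. Thus $\lambda_f = T^k$ for some $k \in \Z_{2^v}$.

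Next, since $g \mapsto \lambda_g$ is a group homomorphism $\G(X)^{\circ} \to \Sym_X$ and $o_{\circ}(f) = 2$, we get $\lambda_f^2 = \lambda_{f^{\circ 2}} = \lambda_0 = \id_X$. This forces $T^{2k} = \id_X$, so $k \in \{0, 2^{v-1}\}$ modulo $2^v$. The case $k = 0$ is ruled out: it would give $\lambda_f = \id_X$, and since $\G(X) \leq \Sym_X$ acts faithfully on $X$ by construction, this would yield $f = 0$, contradicting $o_{\circ}(f) = 2$. Equivalently, one may appeal to irretractability directly, noting that $\lambda_f = \id_X$ places $f$ in $\Soc(\G(X))$, which is trivial by \cref{thm:socle_detects_retractability}. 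Hence $k = 2^{v-1}$, and $\lambda_f(x_i) = T^{2^{v-1}}(x_i) = x_{i+2^{v-1}}$, as claimed. There is no substantial obstacle here: each step is either a citation to the machinery already assembled in the paper or a one-line calculation.
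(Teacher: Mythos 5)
Your proof is correct and follows essentially the same route as the paper: use \cref{pro:fix_provides_automorphisms} to place $\lambda_f$ in the centralizer of the regular cyclic group $\gen{T}$, conclude $\lambda_f \in \gen{T}$, and identify it as the unique involution $T^{2^{v-1}}$ since $\lambda_f \neq \id_X$ (which, as you note, follows already from faithfulness of $\G(X) \leq \Sym_X$, without needing irretractability). You merely spell out the centralizer computation and the exclusion of $k=0$ in more detail than the paper does.
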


\begin{proof}
    By \cref{pro:fix_provides_automorphisms}, $\lambda_f$ centralizes $T$, which implies that $\lambda_f$ centralizes the regular action of the abelian group $\gen{T} \cong \Z_{2^v}$ on $X$. As this implies $\lambda_f \in \gen{T}$, the condition that $o_{\circ}(f) = 2$ now implies that $\lambda_f(x_i) = x_{i + 2^{v-1}}$. 
\end{proof}

We now show that the center of a potential counterexample to the theorem can be described very precisely:

\begin{lem} \label{lem:center_of_counterexample}
    Suppose that $v \geq 2$ and that $X = \{ x_i : i \in \Z_{2^v} \}$ is an irretractable cycle set with diagonal $T(x_i) = x_{i+1}$. If $z \in \Zen(\G(X))$ with $o_{\circ}(z) = 2$ then $\lambda_z(x_i) = x_{\beta-i}$ for $x_i \in X$, where $\beta$ is odd. Furthermore, $z$ is the only element of order $2$ in $\Zen(\G(X))$.
\end{lem}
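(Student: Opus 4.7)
\medskip

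The plan is to combine the rigidity of the diagonal action (encoded by Theorem~\ref{thm:diagonal_commutes_with_conjugated_diagonal}) with the classification of order-two fixed-point-free elements of $\Hol(\Z_{2^v})$ from Lemma~\ref{lem:fixed_point_free_elements_in_Hol(Z2v)}, and then use the irretractability of $X$ via $\Soc(\G(X)) = 0$ to rule out the ``translation'' case. So, first: by \cref{thm:diagonal_commutes_with_conjugated_diagonal}, $T$ commutes with $T^z$. Because $\gen{T}$ acts regularly and abelianly on $X$, \cref{pro:centralizer_of_transitive_permutation_group_is_semiregular} applied to $\gen{T}$ (whose centralizer thus has order at most $|X|$ but contains $\gen{T}$) forces $C_{\Sym_X}(\gen{T}) = \gen{T}$. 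Hence $T^z = \lambda_z^{-1}T\lambda_z \in \gen{T}$, so $\lambda_z$ normalizes $\gen{T}$. Identifying $X$ with $\Z_{2^v}$ via $x_i \leftrightarrow i$, the permutation $\lambda_z$ corresponds to an element $g \in \Hol(\Z_{2^v})$ with $g^2 = \id$. Moreover, $\lambda_z$ is a non-identity element of $\Zen(\G(X)) \subseteq C_{\Sym_X}(\G(X))$, and $\G(X)$ is transitive on $X$, so another application of \cref{pro:centralizer_of_transitive_permutation_group_is_semiregular} shows $\lambda_z$ (and hence $g$) is fixed-point-free.

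Next, by \cref{lem:fixed_point_free_elements_in_Hol(Z2v)} there are only two options: either (i) $g(i) = i + 2^{v-1}$, or (ii) $g(i) = \beta - i$ for some odd $\beta$. To rule out (i), observe that by \cref{cor:full_n_cycle_implies_pi_type} the brace $\G(X)$ is a $2$-brace, so by \cref{pro:p_braces_have_nontrivial_fix} we have $\Fix(\G(X)) \neq 0$; since $\Fix(\G(X))$ is a left ideal (hence a subbrace, cf.\ \cref{lem:dpt_lemma}), we may pick $f \in \Fix(\G(X))$ with $o_{\circ}(f) = 2$. By \cref{lem:fix_of_2_type} we then have $\lambda_f(x_i) = x_{i+2^{v-1}}$. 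If $\lambda_z$ acted in the same way, then $z$ and $f$ would coincide as elements of $\Sym_X$, and since $\G(X) \leq \Sym_X$ this gives $z = f$. But then $z \in \Zen(\G(X)) \cap \Fix(\G(X)) \subseteq \Soc(\G(X))$ by \cref{pro:center_and_fix_intersect_in_socle}, while \cref{thm:socle_detects_retractability} and the irretractability of $X$ yield $\Soc(\G(X)) = 0$, contradicting $o_{\circ}(z) = 2$. Hence option (ii) holds, proving the first claim.

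For uniqueness, suppose $z, z' \in \Zen(\G(X))$ both have $\circ$-order $2$. Writing $z'' = z \circ z'$, we have $z'' \in \Zen(\G(X))$ and, since $z, z'$ commute and square to $0$, $(z'')^{\circ 2} = 0$. If $z \neq z'$, then $z'' \neq 0$, so $o_{\circ}(z'') = 2$. Applying the first part of the lemma to $z$, $z'$ and $z''$, there are odd $\beta, \beta', \gamma \in \Z_{2^v}$ with $\lambda_z(x_i) = x_{\beta-i}$, $\lambda_{z'}(x_i) = x_{\beta'-i}$ and $\lambda_{z''}(x_i) = x_{\gamma-i}$. On the other hand, composing the actions,
\[
\lambda_{z''}(x_i) = \lambda_z(\lambda_{z'}(x_i)) = \lambda_z(x_{\beta'-i}) = x_{\beta - \beta' + i}.
\]
Comparing, $\gamma - i \equiv \beta - \beta' + i \pmod{2^v}$ for all $i$, forcing $2i$ to be constant modulo $2^v$, which is impossible for $v \geq 1$. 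Hence $z = z'$.

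The main obstacle is the dichotomy in \cref{lem:fixed_point_free_elements_in_Hol(Z2v)}: the reflection case (ii) survives automatically and is what the lemma asserts, so the whole argument hinges on eliminating case (i), for which the key leverage is provided by the existence of an order-two element in $\Fix(\G(X))$ together with the irretractability hypothesis via $\Soc(\G(X)) = 0$. Once case (i) is gone, both the structural statement and the uniqueness fall out of the same reflection normal form.
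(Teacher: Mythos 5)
Your proof is correct and follows essentially the same route as the paper's: commuting of $T$ with $T^z$ plus regularity of $\gen{T}$ to place $\lambda_z$ in $\Hol(\Z_{2^v})$, fixed-point-freeness from centrality, the dichotomy of \cref{lem:fixed_point_free_elements_in_Hol(Z2v)}, elimination of the translation case via $f \in \Fix(\G(X))$ and $\Zen \cap \Fix \subseteq \Soc$, and uniqueness by composing two reflections into a translation. You are in fact slightly more explicit than the paper in justifying the existence of the order-two element $f$ and in spelling out the final "$2i$ constant" contradiction, but the argument is the same.
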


\begin{proof}
    Suppose that $X$ is as stated. By \cref{lem:fix_of_2_type}, we find an element $f \in \Fix(\G(X))$ such that $\lambda_f(x_i) = x_{i + 2^{v-1}}$.

    Now let $z \in \Zen(\G(X))$ be such that $o_{\circ}(z) = 2$. By \cref{thm:diagonal_commutes_with_conjugated_diagonal}, we argue that $T^z \in C_{\Sym_{2^v}}(T) = \gen{T}$ and thus, $\lambda_z \in N_{\Sym_{2^v}}(\gen{T})$. As $\lambda_z$ acts without fixed points and normalizes the regular action of $\gen{T} \cong \Z_{2^v}$ on $X$, \cref{lem:fixed_point_free_elements_in_Hol(Z2v)} implies that one of the following cases must occur:
    \begin{enumerate}
        \item $\lambda_{z}(x_i) = x_{i+2^{v-1}}$, or
        \item $\lambda_{z}(x_i) = x_{\beta - i}$ for some odd integer $\beta$.
    \end{enumerate}
    In the first case, we see that $\lambda_z = \lambda_f$, which by \cref{pro:center_and_fix_intersect_in_socle} implies $\Soc(\G(X)) \neq 0$. But then $X$ would be retractable by \cref{thm:socle_detects_retractability}. Therefore, only the second case can occur. This proves the first part of the proposition.

    Suppose now that there was a $z' \in \Zen(\G(X))$ with $o_{\circ}(z') =2$, then $\lambda_{z'}(x_i) = x_{\beta'-i}$ with some odd $\beta' \neq \beta$. But then $z'\circ z \in \Zen(\G(X))$ and $o_{\circ}(z'\circ z)=2$ while  $\lambda_{z' \circ z}(x_i) = x_{i+\beta-\beta'}$, contradicting the first statement of the proposition.
\end{proof}

We now prove a series of statements that will be used in the induction step for the proof of \cref{thm:full_diagonal_implies_retractable_2_type}.

\begin{lem} \label{lem:properties_of_and_reduction_to_xphi}
    Let $v \geq 5$ and suppose that $X$ be an irretractable cycle set of size $n = 2^v$ such that $T$ is a full $n$-cycle. Let $z \in \Zen(\G(X))$ be such that $o_{\circ}(z) = 2$ and consider $\varphi = \id - \lambda_z \in \Endl(\G(X))$. Then:
    \begin{enumerate}[label=\alph*)]
        \item $T^z = T^{-1}$.
        \item $T_{\varphi} = T_{X_{\varphi}} = T^2$
        \item $2^{v-1}$ divides $d(X_{\varphi})$.
        \item $d(X_{\varphi})$ divides $o_+(z)$. In particular, $k \geq v-1$, where $o_+(z) = 2^k$.
        \item $X_{\varphi}$ is irretractable.
        \item Let $h = 2^{k-1}z$, then $h \in \Fix(\G(X_{\varphi}))$
    \end{enumerate}
\end{lem}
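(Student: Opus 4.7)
The plan is to handle the six parts in order, using each one as input for the next, and leveraging the explicit description of $\lambda_z$ from \cref{lem:center_of_counterexample} together with the additive calculus of \cref{pro:properties_of_phi_z}. For (a), \cref{lem:center_of_counterexample} gives $\lambda_z(x_i) = x_{\beta - i}$ with $\beta$ odd, so a direct conjugation $\lambda_z^{-1} \circ T \circ \lambda_z$ evaluates to the map $x_i \mapsto x_{i-1} = T^{-1}(x_i)$. For (b), I would apply \cref{pro:diagonals_are_homomorphisms} to the decomposition $\id = \varphi + \lambda_z$, yielding $T = T_{\id} = T_{\varphi} \circ T_{\lambda_z}$; combined with \cref{pro:diagonal_of_cabling_by_center} which gives $T_{\lambda_z} = T^z$, and (a), this collapses to $T_{\varphi} = T \circ T = T^2$. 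For (c), observe that since $T$ is an $n$-cycle with $n = 2^v$, the permutation $T_{\varphi} = T^2$ has order $2^{v-1}$, and \cref{thm:oT_divides_dX} then forces $2^{v-1} \mid d(X_{\varphi})$.

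For (d), \cref{pro:permutation_group_of_endocabled_cycle_set} identifies $\G(X_{\varphi}) = \varphi(\G(X))$, and \cref{pro:properties_of_phi_z}d) rewrites each element as $\varphi(g) = z - \lambda_g(z)$. Since $\lambda_g$ is an additive automorphism of $\G(X)^+$, the element $\lambda_g(z)$ shares the additive order $2^k$ of $z$, so every element of the abelian group $\G(X_{\varphi})^+$ has additive order dividing $2^k$; hence $d(X_{\varphi}) \mid 2^k$, and combining with (c) gives $k \geq v-1$. For (f), I would verify the fix condition directly: for any $g \in \G(X_{\varphi}) = \varphi(\G(X))$, \cref{pro:properties_of_phi_z}c) gives $\lambda_g(z) = z - 2g$, so $\lambda_g(2^{k-1} z) = 2^{k-1} z - 2^k g = 2^{k-1} z$ because (d) ensures $2^k g = 0$.

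The main obstacle is (e), which I would handle by contradiction: assuming $X_{\varphi}$ retractable, I would invoke \cref{pro:replacement_of_center} with the proposition's parameter equal to $1$; its hypothesis that $4 \mid d(X_{\varphi})$ is supplied by (c) together with $v \geq 5$, which gives $2^{v-1} \geq 16$. The proposition would then yield a second central involution $z' \neq z$ in $\Zen(\G(X))$, directly contradicting the uniqueness of the central involution established in \cref{lem:center_of_counterexample}. The delicate point here is merely the correct marshalling of the hypotheses of \cref{pro:replacement_of_center}; once this is in place, every remaining step is essentially bookkeeping through \cref{pro:properties_of_phi_z} and the diagonal homomorphism property.
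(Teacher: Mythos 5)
Your proposal is correct and follows essentially the same route as the paper's proof: part by part, the same lemmas (\cref{lem:center_of_counterexample}, \cref{pro:diagonals_are_homomorphisms}, \cref{pro:diagonal_of_cabling_by_center}, \cref{thm:oT_divides_dX}, \cref{pro:properties_of_phi_z}, \cref{pro:replacement_of_center}) are invoked in the same way, including the contradiction with the uniqueness of the central involution for part~(e). The only detail left implicit is the explicit citation of \cref{cor:full_n_cycle_implies_pi_type} to verify that $\G(X)$ is a $2$-group when applying \cref{pro:replacement_of_center}, which is a routine hypothesis check.
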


\begin{proof}
    Identifying $X = \{x_i : i \in \Z_{2^v} \}$ and $T(x_i) = x_{i+1}$, we apply \cref{lem:center_of_counterexample} in order to describe $\lambda_z(x_i) = x_{\beta -i}$ for some odd $\beta$. In order to prove a), we calculate
    \[
    T^z(x_i) = \lambda_z(T(x_{\beta-i})) = \lambda_z(x_{\beta-i+1}) = x_{\beta-(\beta-i+1)} = x_{i-1} = T^{-1}(x_i).
    \]
    Applying \cref{pro:diagonals_are_homomorphisms} and \cref{pro:diagonal_of_cabling_by_center}, we derive b) by calculating
    \[
        T_{\varphi} = T_{\id} \circ T_{\lambda_z}^{-1} = T \circ (T^{z})^{-1} = T \circ (T^{-1})^{-1} = T^2.
    \]
    By \cref{thm:oT_divides_dX}, it now follows that $d(X_{\varphi})$ is divisible by $o(T_{\varphi}) = o(T^2) = 2^{v-1}$, therefore proving c).

    In order to prove d), note that, by \cref{pro:properties_of_phi_z}d), $\varphi(g) = z - \lambda_g(z)$ for $g \in \G(X)$, so $o_+(\varphi(g))$ divides $o_+(z)$ for all $g \in \G(X)$. As $\G(X_{\varphi}) = \varphi(\G(X))$ by \cref{pro:permutation_group_of_endocabled_cycle_set}, it follows that $d(X_{\varphi}) = \exp_+(\varphi(\G(X)))$ divides $o_+(z)$. As a consequence of c), $o_+(z) = 2^k$ is divisible by $2^{v-1}$, therefore $k \geq v-1$.

    In order to prove e), suppose that $X_{\varphi}$ is retractable. By c), $d(X_{\varphi})$ is divisible by $4$, as $v-1 \geq 4$. Note that $\G(X)$ is a $2$-group by \cref{cor:full_n_cycle_implies_pi_type}, so by \cref{pro:replacement_of_center}, we find a $z' \in \Zen(\G(X))$ with $o_{\circ}(z') = 2$ and $z \neq z'$. As this contradicts the second statement in \cref{lem:center_of_counterexample}, we conclude that $X_{\varphi}$ is irretractable.

    For the proof of e), let $h = 2^{k-1}z$. From \cref{pro:properties_of_phi_z}c), it follows for $g \in \G(X_{\varphi})$ that
    \[
    \lambda_g(h) = 2^{k-1} \lambda_g(z) = 2^{k-1}(z-2g) = h - 2^kg = h
    \]
    where $2^kg = 0$ follows from d) which implies that $\exp_+(\G(X_{\varphi}))$ divides $o_+(z)$. Therefore, $h \in \Fix(\G(X_{\varphi}))$.
\end{proof}

We now want to understand the subbrace generated by the center of a potential counterexample, together with its action on $X$. For one of the statements, we will need the following lemma:

\begin{lem}\label{lem:centralizer_of_T2}
    Let $v \geq 2$ and let $\pi \in \Sym_{\Z_{2^v}}$ be defined by $\pi(i) = i+2$. Let $\rho \in C_{\Sym_{\Z_{2^v}}}(\pi)$ be such that $\rho^2 = \id_{\Z_{2^v}}$ and $\rho(i) \neq i$ for all $i \in \Z_{2^v}$. Then exactly one of the following is true:
    \begin{enumerate}
        \item $\rho(i) = i + 2^{v-1}$ for all $i \in \Z_{2^v}$.
        \item There is an odd $\gamma \in \Z_{2^v}$ such that
        \[
        \rho(i) = \begin{cases}
            i + \gamma & i \textnormal{ even}, \\
            i - \gamma & i \textnormal{ odd}.
        \end{cases}
        \]
    \end{enumerate}
\end{lem}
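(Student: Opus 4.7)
The plan is to exploit the fact that $\pi$ has a transparent orbit structure. I would start by observing that $\pi$ has exactly two orbits on $\Z_{2^v}$, namely the even residues $E = \{0,2,\ldots,2^v-2\}$ and the odd residues $O = \{1,3,\ldots,2^v-1\}$, on each of which $\pi$ restricts to a single $2^{v-1}$-cycle. Since any $\rho \in C_{\Sym_{\Z_{2^v}}}(\pi)$ permutes the $\pi$-orbits, the proof splits into two cases according to whether $\rho$ preserves $E$ and $O$ or exchanges them. In either case the commutation with $\pi$ rigidly constrains $\rho$, and the extra hypotheses $\rho^2 = \id$ and fixed-point-freeness then leave very little freedom.

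In the orbit-preserving case, I would invoke the fact that the centralizer of a single $n$-cycle in $\Sym_n$ is the cyclic group it generates. Thus $\rho|_E$ and $\rho|_O$ are powers of the shift by $2$, so $\rho(i) = i + 2a$ for even $i$ and $\rho(j) = j + 2b$ for odd $j$ with $a,b \in \Z_{2^{v-1}}$. The involution condition $\rho^2 = \id$ forces $2a \equiv 2b \equiv 0 \pmod{2^{v-1}}$, i.e.\ $a,b \in \{0, 2^{v-2}\}$, and the assumption that $\rho$ has no fixed points rules out $a = 0$ and $b = 0$. Hence $a = b = 2^{v-2}$, which gives $\rho(i) = i + 2^{v-1}$ for every $i$ and yields case (1).

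In the orbit-swapping case, I would set $\gamma = \rho(0) \in O$, which is odd, and use commutation with $\pi$ to obtain $\rho(2k) = \pi^k(\rho(0)) = \gamma + 2k$ for all $k$; equivalently $\rho(i) = i + \gamma$ for even $i$. Applying $\rho$ once more to $\gamma + 2k$ and using $\rho^2 = \id$ gives $\rho(\gamma + 2k) = 2k$, so writing $j = \gamma + 2k$ we get $\rho(j) = j - \gamma$ for odd $j$, which is exactly case (2); fixed-point-freeness is automatic since $\gamma$ is odd (both $i + \gamma = i$ and $j - \gamma = j$ would force $\gamma = 0$). The two cases are mutually exclusive because, for $v \geq 2$, the map in case (1) preserves parity while the map in case (2) exchanges it. I do not anticipate any real obstacle: the argument amounts to recognising $C_{\Sym_{\Z_{2^v}}}(\pi)$ as the usual wreath-product-type centralizer of two equal-length cycles, followed by standard congruence bookkeeping to impose the involution and fixed-point-freeness constraints.
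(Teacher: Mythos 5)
Your proposal is correct and follows essentially the same route as the paper: both split according to whether $\rho$ preserves or exchanges the two $\pi$-orbits (the even and odd residues), use the fact that the centralizer of the induced $2^{v-1}$-cycle on a preserved orbit is the cyclic group it generates (the paper phrases this as centralizing the regular action of $\gen{\pi}$), and in the swapping case propagate $\gamma = \rho(0)$ by commutation and finish with $\rho^2 = \id$. The congruence bookkeeping in your orbit-preserving case matches the paper's determination of $\gamma_e = \gamma_o = 2^{v-1}$, so there is nothing to add.
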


\begin{proof}
    Let $\Orb_e, \Orb_o$ be the orbits of $\pi$ on $\Z_{2^v}$ which consist of the even and the odd elements, respectively. $\rho$ either fixes or switches the orbits of $\pi$ setwise. In the first case, that is $\rho(\Orb_e) = \Orb_e$, $\rho(\Orb_o) = \Orb_o$, we see that $\rho$ centralizes the regular action of the abelian group $\gen{\pi}$ on the respective orbits, showing that there are $\gamma_e,\gamma_o \in \Z_{2^v}$ such that
    \[
    \rho(i) = \begin{cases}
        i + \gamma_e & i \in \Orb_e, \\
        i + \gamma_o & i \in \Orb_o.
    \end{cases}
    \]
    As $\rho^2 = \id_{\Z_{2^v}}$ and $\rho$ is fixed-point free, we infer that $\gamma_e = \gamma_o = 2^{v-1}$, therefore showing that $\rho$ is as in case (1).
    
    If, however, $\rho(\Orb_e) = \Orb_o$ and $\rho(\Orb_o) = \Orb_e$, let $\rho(0) = \gamma$. Clearly, $\gamma$ is odd. For all $k$, we now calculate
    \[
    \rho(2k) = \rho(\pi^k(0)) = \pi^k(\rho(0)) = \pi^k(\gamma) = \gamma + 2k,
    \]
    showing that $\rho(i) = i+\gamma$ for $i \in \Orb_e$. As $\rho^2 = \id_{\Z_{2^v}}$, it follows that $\rho(i) = i-\gamma$ for $i \in \Orb_o$, thus showing that $\rho$ is as in case (2).
\end{proof}

\begin{lem} \label{lem:ingredients_for_induction_step}
    Suppose that \cref{thm:full_diagonal_implies_retractable_2_type} is true for all cycle sets of size $2^{v-1}$. Let $X$ be an irretractable cycle set of size $n = 2^v$ ($v \geq 5$) such that $T$ is a full $n$-cycle. Let $z \in \Zen(\G(X))$ be such that $o_{\circ}(z) = 2$ and let $\varphi = \id - \lambda_z \in \Endl(\G(X))$. Furthermore, let $o_+(z) = 2^k$ in the brace $\G(X)$, then
    \begin{enumerate}[label=\alph*)]
        \item $X_{\varphi}$ is irreducible and, in particular, indecomposable.
        \item With $h = 2^{k-1}$, the permutation $\lambda_h$ does not have any fixed points on $X$.
        \item $k \in \{v-1,v \}$.
        \item The subbrace $B = \gen{z}_+$ acts semiregularly on $X$.
    \end{enumerate}
\end{lem}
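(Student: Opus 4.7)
The plan is to establish the four parts in the order (a), (b), (d), (c): part (b) depends on part (a) via a sub-cycle set argument, and part (c) is immediate from (d).

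For part (a), I argue by contradiction. Suppose $Y \subsetneq X_{\varphi}$ is a proper non-empty sub-cycle set. By \cref{lem:properties_of_and_reduction_to_xphi}b, the diagonal of $X_{\varphi}$ is $T^2$, which (writing $X = \{x_i : i \in \Z_{2^v}\}$ with $T(x_i) = x_{i+1}$) has exactly two orbits $\Orb_{\mathrm{even}}$ and $\Orb_{\mathrm{odd}}$, each of size $2^{v-1}$. Since $Y$ is closed under $T_{\varphi}$, it is a union of $T^2$-orbits, so without loss of generality $Y = \Orb_{\mathrm{even}}$. Then $Y$ is a cycle set of size $2^{v-1}$ (with $v-1 \geq 4$) whose diagonal is a full cycle, and by the inductive hypothesis $Y$ is retractable. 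Combined with the irretractability of $X_{\varphi}$ from \cref{lem:properties_of_and_reduction_to_xphi}e, this yields $y_1 \neq y_2 \in Y$ with $\varphi(\lambda_{y_1})$ and $\varphi(\lambda_{y_2})$ coinciding on $Y$ but differing on $\Orb_{\mathrm{odd}}$. I then plan to exploit the identity $\varphi(\lambda_{\lambda_z(y)}) = -\varphi(\lambda_y)$ (which follows from $\varphi$ being additive and $\lambda_z \circ \lambda_z = \id_{\G(X)^+}$, via the fundamental property $\lambda_z(\lambda_y) = \lambda_{\lambda_z(y)}$) to transport this pair to $\lambda_z(y_1), \lambda_z(y_2) \in \Orb_{\mathrm{odd}}$ with analogous collapsing behaviour, from which a contradiction with the triviality of $\cabsim{\varphi}$ on $X$ via \cref{pro:blocks_of_phi_are_retraction_classes} is forced.

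Part (b) follows quickly from (a): by \cref{lem:properties_of_and_reduction_to_xphi}f, $h = 2^{k-1}z \in \Fix(\G(X_{\varphi}))$, so by \cref{pro:fix_provides_automorphisms} $\lambda_h$ is an automorphism of $X_{\varphi}$. Hence $F = \{x \in X : \lambda_h(x) = x\}$ is a sub-cycle set of $X_{\varphi}$; by (a) $F$ is empty or equal to $X$, and the latter gives $\lambda_h = \id_X$, whence $h \in \Soc(\G(X)) = 0$ (since $X$ is irretractable, by \cref{thm:socle_detects_retractability}), forcing $h = 0$, contrary to $k \geq v-1 \geq 4$. For part (d), the core step is $\gen{z}_+ \subseteq \Zen(\G(X))$: since $z \in \Zen(\G(X))$ and $\lambda_z \circ \lambda_z = \id$, one verifies the identity $\lambda_{mz}(g) = (1-m)g + m\lambda_z(g)$ for all $m, g$ by observing that $L(m) := (1-m)\id + m\lambda_z$ defines a group homomorphism $(\Z_{2^k}, \circ) \to \Aut(\G(X)^+)$ agreeing with $m \mapsto \lambda_{mz}$ on the generators $1, 2$ of $B_k^{\circ}$ from \cref{pro:structure_of_bk}; this identity in turn forces $mz \circ g = g \circ mz$ for all $g \in \G(X)$. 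Then \cref{pro:centralizer_of_transitive_permutation_group_is_semiregular}, applied to $\gen{z}_+^{\circ} \subseteq \Zen(\G(X))$ and the transitive action of $\G(X)$ on $X$ (from irreducibility of $X$ via \cref{pro:full_diagonal_implies_no_sub_cyclesets}), gives that $\gen{z}_+^{\circ}$ acts semiregularly on $X$, which is precisely (d). Finally, part (c) combines the lower bound $k \geq v-1$ from \cref{lem:properties_of_and_reduction_to_xphi}d with the divisibility $2^k = |\gen{z}_+| \mid |X| = 2^v$ coming from semiregularity in (d).

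The main obstacle is part (a): turning the retractability of $Y$ into a genuine contradiction with $X_{\varphi}$ irretractable is delicate, since the two permutations $\varphi(\lambda_{y_1}), \varphi(\lambda_{y_2})$ agree on $Y$ but differ on $\Orb_{\mathrm{odd}}$, and the relationship $\varphi(\lambda_y) = \lambda_y - \lambda_{\lambda_z(y)}$ does not decompose cleanly along the two $T^2$-orbits. A secondary technical point is the verification in part (d) that $\lambda_{mz} = L(m)$ on the nontrivial generator $m = 2$ of $B_k^{\circ}$, which requires a short brace-equation computation expressing $2z = z \circ \lambda_z^{-1}(z) = z \circ (-z)$ and expanding $\lambda_{2z} = \lambda_z \circ \lambda_{-z}$.
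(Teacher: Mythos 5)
Your part (b) matches the paper's argument. The other three parts have real problems.

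\textbf{Part (a).} Your setup (identifying $Y$ with one of the two cycles of $T^2$ and invoking the inductive hypothesis to get retractability of $Y$) is exactly the paper's, but you stop precisely at the step that carries the content, and the tool you propose does not close it. Having $y_1\neq y_2$ in $Y$ with $\lambda_{\varphi(\lambda_{y_1})}^{-1}$ and $\lambda_{\varphi(\lambda_{y_2})}^{-1}$ agreeing \emph{on $Y$} only, you need agreement on all of $X$ to contradict the irretractability of $X_{\varphi}$; the identity $\varphi(\lambda_{\lambda_z(y)})=-\varphi(\lambda_y)$ transports the \emph{pair} $(y_1,y_2)$ to the odd orbit but does not transport the \emph{domain} on which the two permutations agree, so no contradiction is forced. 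The missing idea is that $\lambda_z$ is central, hence commutes with $\lambda_{\varphi(\lambda_{y_i})}$, and maps the odd orbit bijectively onto $Y$; therefore for odd-indexed $w$ one has $y_i\cab{\varphi}w=\lambda_z^{-1}\bigl(y_i\cab{\varphi}\lambda_z(w)\bigr)$ with $\lambda_z(w)\in Y$, which extends the agreement from $Y$ to all of $X$ and yields $\varphi(\lambda_{y_1})=\varphi(\lambda_{y_2})$, contradicting \cref{lem:properties_of_and_reduction_to_xphi}e).

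\textbf{Part (d).} Your core claim $\gen{z}_+\subseteq\Zen(\G(X))$ is not established, and the proposed verification is circular. Agreement of $L(m)=(1-m)\id+m\lambda_z$ with $m\mapsto\lambda_{mz}$ at the generator $m=2$ is \emph{equivalent} to $2z$ being central (one checks $g\circ 2z=2z+g-2\varphi(g)$, so $2z\in\Zen$ iff $\lambda_{2z}=\id-2\varphi=2\lambda_z-\id$); your route computes $\lambda_{2z}=\lambda_z\circ\lambda_{-z}$ via $2z=z\circ(-z)$, but $\lambda_{-z}=2\id-\lambda_z$ is again equivalent to $-z$ (equivalently $2z$) being central, so nothing is gained. \cref{pro:central_involutions_generate_Bk} only gives that $\gen{z}_+$ is a subbrace isomorphic to $B_k$, not that it is central in $\G(X)^{\circ}$. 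The paper's proof of (d) takes a completely different and much more laborious route: in the case $k=v$, regularity of $B$ on a single orbit already forces the orbit to be all of $X$; in the case $k=v-1$ it assumes non-semiregularity and derives a contradiction using an element $f\in\Fix(\G(X))$, \cref{lem:centralizer_of_T2}, and a counting argument on the orbit. That the paper (here and in \cref{lem:counterexample_has_k=v-1} and the main proof) repeatedly goes to some trouble to show that specific elements centralize $B$ is a strong indication that your blanket centrality claim is not available.

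\textbf{Part (c).} In your scheme this is a corollary of (d), so it inherits the gap. The paper proves (c) \emph{before} and independently of (d): it picks $\tilde{x}$ with $\lambda_h(\tilde{x})\neq\lambda_z(\tilde{x})$ so that none of the three involutions $h$, $z$, $h\circ z$ of $B^{\circ}$ (\cref{pro:structure_of_bk}b)) fixes $\tilde{x}$ --- using your part (b) for $h$ and \cref{pro:centralizer_of_transitive_permutation_group_is_semiregular} for $z$ --- whence $B$ acts regularly on the orbit of $\tilde{x}$ and $2^k=|B|\leq|X|=2^v$. You would need to restore an argument of this kind, since semiregularity on all of $X$ is exactly what part (d) still owes.
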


\begin{proof}
    For the proof of a), suppose now that there is a proper sub-cycle set $\emptyset \neq Y \subsetneq X_{\varphi}$. As $T_{\varphi} = T^2$, the same argument as in the proof of \cref{pro:full_diagonal_implies_no_sub_cyclesets} shows that $Y$ coincides with one of the two cycles of $T^2$, say $Y = \{ x_{2i} : i \in \Z_{2^v} \}$. But then, $T_Y$ is a full $2^{v-1}$-cycle on $Y$. If the statement of \cref{thm:full_diagonal_implies_retractable_2_type} is assumed to be true for cycle sets of size $2^{v-1}$, then $Y$ is retractable, that is, there are $x,y \in Y$ such that $x \cab{\varphi} x_{2i} = y \cab{\varphi} x_{2i}$ for all $i \in \Z_{2^v}$. But $\lambda_z$ centralizes $\G(X_{\varphi})$, so we also see that
    \begin{align*}
        x \cab{\varphi} x_{2i+1} & = \lambda_z^{-1}(x \cab{\varphi} (\lambda_z(x_{2i+1}))) \\ & = \lambda_z^{-1} (x \cab{\varphi} \underbrace{x_{\beta -(2i+1)}}_{\in Y})  \\ & = \lambda_z^{-1}(y \cab{\varphi} x_{\beta - (2i+1)})  \\ & = \lambda_z^{-1}(y \cab{\varphi} (\lambda_z(x_{2i+1})))  \\ & = y \cab{\varphi} x_{2i+1}.
    \end{align*}
    This implies that $x \cab{\varphi} z = y \cab{\varphi} z$ for \emph{all} $z \in X$, with the consequence that $X_{\varphi}$ is retractable, contradicting \cref{lem:properties_of_and_reduction_to_xphi}e). Therefore, $X_{\varphi}$ is irreducible and, in particular, indecomposable.

    It follows from \cref{lem:properties_of_and_reduction_to_xphi}f) together with \cref{pro:fix_provides_automorphisms} that $\lambda_h$ is an automorphism of $X_{\varphi}$. Let $F = \{x \in X: \lambda_h(x) = x \}$, then $F$ is a sub-cycle set of $X_{\varphi}$. Under the assumption that \cref{thm:full_diagonal_implies_retractable_2_type} is true for cycle sets of size $2^{v-1}$, either $F = \emptyset$ or $F = X$ due to part a). As $h \neq 0$, it follows that $F = \emptyset$, thus proving b).

    By \cref{lem:properties_of_and_reduction_to_xphi}d), we know that $k \geq v-1$. In order to prove c), we show $k \leq v$. In order to achieve that, let $B = \gen{z}_+$. As $v \geq 5$, we know from \cref{pro:central_involutions_generate_Bk} that $B \cong B_k$. Now pick an $\Tilde{x} \in X$ with $\lambda_h(\Tilde{x}) \neq \lambda_z(\Tilde{x})$ and consider $\Orb = \Orb_B(\Tilde{x})$, its orbit under $B$.
    By \cref{pro:structure_of_bk}, the only elements of order $2$ in $B$ are $h,z$ and $h \circ z$. From b) and our assumption on $x$ it follows that no element of order $2$ in $B$ fixes $x$. As $B$ is a $2$-group, this implies that $B$ acts regularly on $\Orb$, therefore proving $|B| \leq |X|$, which implies $k \leq v$.

    The argument in the proof of part c) already proves statement d) in the case when $k = v$, so suppose now that $k = v-1$ and that $B$ does \emph{not} act semiregularly on $X$. Let $f \in \Fix(\G(X))$ be such that $o_{\circ}(f) = 2$, then by \cref{lem:fix_of_2_type}, $f$ acts on $X$ by $\lambda_f(x_{i+2^{v-1}})$, so $\lambda_f(x) \neq \lambda_z(x)$ for all $x \in X$. If we had $h = f$, the method of proof in part c) would show that $B$ \emph{does} act semiregularly on $X$. Therefore $h \neq f$. Using \cref{pro:fix_of_brace_provides_automorphisms}, we calculate for all $mz \in B$ that
    \[
    {}^{\circ f}(mz) = \lambda_f(mz) = m \cdot \lambda_f(z) = m \cdot {}^{\circ f}z = mz,
    \]
    implying that $f$ centralizes $B$. As before, let $\Tilde{x} \in X$ be such that $B$ acts regularly on $\Orb = \Orb_B(x)$. We must have $\lambda_f(\Orb) = \Orb$, else the abelian group $\gen{B \cup \{ f \}}_{\circ}$ would act transitively on $X$, thus implying semiregularity of the action of $B$. As $\lambda_f(\Orb) = \Orb$, $f$ has to centralize the regular action of the abelian group $B^{\circ}$ on $\Orb$, thus there is a $g \in B$ such that $\lambda_g(x) = \lambda_f(x)$ for all $x \in X$. As $o_{\circ}(f) = 2$, $g \in \{ h,z \circ h, z \}$. Clearly, $g \neq z$ as $\lambda_f(x) = \lambda_z(x)$ for all $x \in X$. As $h$ centralizes $T_{\varphi} = T^2$ and $h \neq f$, we use \cref{lem:centralizer_of_T2} to argue that there is an odd $\gamma \in \Z_{2^v}$ such that $\lambda_h(x_i) = x_{i \pm \gamma} \neq x_{i + 2^{v-1}} = \lambda_f(x_i)$ for all $x_i \in X$. Therefore, $g \neq f$. Finally, suppose that $g = z \circ h$. As $\lambda_h$ is an involution on $\Orb$, $\lambda_h(x_i) = x_{i+\gamma}$ holds for exactly $\frac{|\Orb|}{2} = 2^{v-2}$ elements in $\Orb$. Let $x_i \in \Orb$ be such that $\lambda_h(x_i) = x_{i+ \gamma}$, then
    \[
    \lambda_f(x_i) = \lambda_{z \circ h}(x_i) \Rightarrow i + 2^{v-1} = 1-\gamma - i \Rightarrow i \in \left\{ \frac{1-\gamma}{2} + 2^{v-2}, \frac{1-\gamma}{2} + 2^{v-2} + 2^{v-1} \right\}.
    \]
    Therefore, there are only $2 < 2^{v-2}$ such elements, a contradiction! Therefore, $B$ acts semiregularly on $X$.
\end{proof}

We now use the technique of \emph{cabling out the center} in order to exclude $k = v$ in the above proposition:

\begin{lem} \label{lem:counterexample_has_k=v-1}
    Suppose that \cref{thm:full_diagonal_implies_retractable_2_type} is true for all cycle sets of size $2^{v-1}$. Let $X$ be an irretractable cycle set of size $n = 2^v$ ($v \geq 5$) such that $T$ is a full $n$-cycle. Let $z \in \Zen(\G(X))$ be such that $o_{\circ}(z) = 2$, then $o_+(z) = 2^{v-1}$.
\end{lem}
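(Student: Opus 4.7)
Suppose, for contradiction, that $k = v$. Then $B := \gen{z}_+$ has order $2^v$ and, by \cref{pro:central_involutions_generate_Bk}, is isomorphic to $B_v$; it acts regularly on $X$ by \cref{lem:ingredients_for_induction_step}d). Set $h := 2^{v-1}z \in B$. By \cref{pro:structure_of_bk}b) we have $o_\circ(h) = 2$, and by \cref{lem:properties_of_and_reduction_to_xphi}f) we have $h \in \Fix(\G(X_\varphi))$ for $\varphi := \id - \lambda_z \in \Endl(\G(X))$. Combining \cref{pro:fix_provides_automorphisms} with \cref{lem:ingredients_for_induction_step}b), $\lambda_h$ is a fixed-point-free involution on $X$ centralising $T_\varphi = T^2$, and \cref{lem:centralizer_of_T2} then leaves two possibilities: either (i) $\lambda_h$ is the shift $x_i \mapsto x_{i + 2^{v-1}}$, or (ii) $\lambda_h(x_i) = x_{i \pm \gamma}$ for some odd $\gamma$, with the sign depending on the parity of $i$.

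The strategy is to cable out the centre via $\varphi$: by \cref{pro:cabling_out_the_center}, $z \notin \G(X_\varphi)$, so the permutation brace shrinks, and I plan to invoke \cref{pro:replacement_of_center} with $k' = 2$ to produce a central involution $z' \neq z$ in $\G(X)$, contradicting the uniqueness established in \cref{lem:center_of_counterexample}. The divisibility condition $4 \mid d(X_{2\varphi})$ is easily verified in both subcases: in (i), $\lambda_h = \lambda_f$ for the unique order-$2$ element $f \in \Fix(\G(X))$ from \cref{lem:fix_of_2_type}, so faithfulness of $\G(X) \leq \Sym_X$ forces $h = f \in \Fix(\G(X))$, and \cref{pro:properties_of_phi_z}d) then gives $2^{v-1}\varphi = 0$, whence $d(X_\varphi) = 2^{v-1}$ and $d(X_{2\varphi}) = 2^{v-2} \geq 4$ (using $v \geq 5$); in (ii), $h \notin \Fix(\G(X))$ by the uniqueness in \cref{lem:fix_of_2_type}, so $d(X_\varphi) = 2^v$ and $d(X_{2\varphi}) = 2^{v-1} \geq 4$.

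The main obstacle is then establishing the retractability of $X_{2\varphi}$. For this I plan a pigeonhole argument. By \cref{lem:properties_of_and_reduction_to_xphi}e) combined with \cref{pro:blocks_of_phi_are_retraction_classes}, the map $x \mapsto \varphi(\lambda_x)$ is injective from $X$ into $\G(X_\varphi)^+$, supplying $2^v$ pairwise distinct elements; exploiting the exponent bound on $\G(X_\varphi)^+$ together with the rank restriction imposed by the $B_v$-structure of $B$ and its regular action on $X$, one shows that the induced map $x \mapsto \varphi(\lambda_x) + 2\G(X_\varphi)^+$ into the quotient $\G(X_\varphi)^+ / 2\G(X_\varphi)^+$ cannot be injective. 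This forces distinct $x, y \in X$ with $2\varphi(\lambda_x) = 2\varphi(\lambda_y)$, i.e.\ $x \cabsim{2\varphi} y$, so $X_{2\varphi}$ is retractable and \cref{pro:replacement_of_center} yields the contradictory $z'$. The hard part will be the rank bound on $\G(X_\varphi)^+$: one has to translate the $B_v$-structure of $\gen{z}_+$, together with regularity of its action on $X$ and the form of $\lambda_h$ furnished by \cref{lem:centralizer_of_T2}, into a quantitative control on $|\G(X_\varphi)^+ / 2\G(X_\varphi)^+|$.
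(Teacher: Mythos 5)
Your setup (the reduction to $k=v$, the identification $B=\gen{z}_+\cong B_v$ acting regularly, the element $h=2^{v-1}z\in\Fix(\G(X_\varphi))$, and the verification that $4$ divides $d(X_{2\varphi})$ in both cases supplied by \cref{lem:centralizer_of_T2}) is sound, and the final step --- feeding retractability of $X_{2\varphi}$ into \cref{pro:replacement_of_center} to contradict the uniqueness statement of \cref{lem:center_of_counterexample} --- would indeed close the argument. But the pivotal claim, that $X_{2\varphi}$ is retractable, is exactly what you do not prove. Your pigeonhole scheme requires $\bigl|\G(X_\varphi)^+/2\G(X_\varphi)^+\bigr|<2^v$, i.e.\ that the abelian $2$-group $\G(X_\varphi)^+$ has rank at most $v-1$. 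None of the facts you invoke gives such a bound: the $B_v$-structure of $\gen{z}_+$ and its regular action only control a rank-one subgroup of $\G(X)^+$ and the cardinality of $X$, while $\G(X_\varphi)^+$ is generated by the $2^v$ pairwise distinct elements $\varphi(\lambda_x)$ and could a priori have rank far exceeding $v$. So the proposal reduces the lemma to an unsubstantiated structural assertion, and I see no way to derive it from the cited results; this is a genuine gap, not a routine verification.

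For comparison, the paper's proof avoids $X_{2\varphi}$ and any rank considerations altogether. Assuming $o_+(z)=2^v$, it picks a central involution $z'$ of the $2$-group $\G(X_\varphi)$; by \cref{pro:cabling_out_the_center} one has $z\notin\G(X_\varphi)$, so $z'\neq z$. Since $z'$ centralizes $\gen{2z}_+=\varphi(\gen{z}_+)\subseteq\G(X_\varphi)$ and hence all of $B$ by \cref{pro:structure_of_bk}, and since $B$ acts \emph{regularly} on $X$ (this is precisely where $k=v$ enters), $z'$ must lie in $B$, hence in $B\cap\G(X_\varphi)=\gen{2z}_+$, whose unique involution is $h$. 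Thus $z'=h\in\Zen(\G(X_\varphi))\cap\Fix(\G(X_\varphi))\subseteq\Soc(\G(X_\varphi))$ by \cref{pro:center_and_fix_intersect_in_socle}, making $X_\varphi$ retractable and contradicting \cref{lem:properties_of_and_reduction_to_xphi}e). If you want to salvage your approach you would have to either prove the rank bound or, more plausibly, replace the pigeonhole step by this identification of a concrete socle element.
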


\begin{proof}
    Suppose that $o_+(z) = 2^v$. Let $\varphi = \id - \lambda_z \in \Endl(\G(X))$. Then by \cref{pro:cabling_out_the_center} and \cref{pro:permutation_group_of_endocabled_cycle_set}, $z \not\in \varphi(\G(X)) = \G(X_{\varphi})$. As $\G(X_{\varphi}) \leq \G(X)$ is a $2$-group, there is a $z' \in \G(X_{\varphi})$ with $o_{\circ}(z')= 2$. Clearly, $z' \neq z$. Let $B= \gen{z}_+$. As $z'$ centralizes $z$ and $\gen{2z}_+ = \varphi(\gen{z}_+) \subseteq \G(X_{\varphi})$, we see that $z'$ centralizes $B$ by \cref{pro:structure_of_bk}a). Now by \cref{lem:ingredients_for_induction_step}d), $B$ acts regularly on $X$. As $B$ is abelian, $z' \in B$. In particular, $z' \in B \cap \G(X_{\varphi})$. As $z \not\in \G(X_{\varphi})$ and $2z \in \G(X_{\varphi})$, we see that $B \cap \G(X_{\varphi}) = \gen{2z}_+$. By \cref{pro:structure_of_bk}b), $h = 2^{k-1}z$ is the only element of order $2$ in $\gen{2z}_+$, therefore
    \[
    z' = h \in \Zen(\G(X_{\varphi})) \cap \Fix(\G(X_{\varphi})) \leq \Soc(\G(X_{\phi}))
    \]
    by \cref{pro:center_and_fix_intersect_in_socle}, so $\G(X_{\varphi})$ is retractable by \cref{thm:socle_detects_retractability}, contradicting \cref{lem:properties_of_and_reduction_to_xphi}e). It follows that $o_+(z) \neq 2^v$, so by \cref{lem:ingredients_for_induction_step}c) it follows that $o_+(z) = 2^{v-1}$.
\end{proof}

We are now in the position of finishing the proof of \cref{thm:full_diagonal_implies_retractable_2_type}. We first set up the starting stone for our inductive argument:

\begin{lem} \label{lem:start_the_induction}
    The statement of \cref{thm:full_diagonal_implies_retractable_2_type} is true if $v \in \{ 3,4 \}$.
\end{lem}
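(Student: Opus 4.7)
The plan is to reduce the statement for $v \in \{3,4\}$ to a finite enumeration, carried out with the aid of a computer algebra system such as \textsf{GAP} together with the \texttt{YangBaxter} package already cited in this section. Since the inductive arguments in \cref{lem:ingredients_for_induction_step} and \cref{lem:counterexample_has_k=v-1} only work for $v \geq 5$, the small cases $n = 8$ and $n = 16$ must be settled by direct inspection of the classification of cycle sets of those sizes.

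First, for each $v \in \{3,4\}$, I would iterate through the list of all (finite, non-degenerate) cycle sets of size $n = 2^v$ up to isomorphism as stored in the \texttt{YangBaxter} library, compute the diagonal permutation $T \colon x \mapsto x \ast x$, and retain only those $X$ for which $T$ is a full $n$-cycle. By \cref{cor:full_n_cycle_implies_pi_type}, such an $X$ is necessarily of $2$-type, which provides a secondary sanity check on the candidates. Second, for each surviving $X$, I would compute the permutations $\sigma_x$ for all $x \in X$ and check whether there exist distinct $x, y \in X$ with $\sigma_x = \sigma_y$; equivalently, whether $|X_{\ret}| < |X|$. If in every case the answer is affirmative, then by definition $X$ is retractable and the lemma is established.

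The expected outcome, consistent with the explicit identification of $X_{4,19}$ (at size $4$) as the \emph{unique} irretractable example with full-cycle diagonal among the $2^v$-sized cycle sets so far encountered, is that every candidate at size $8$ and every candidate at size $16$ is retractable. The main obstacle is computational feasibility rather than mathematical: the raw enumeration of cycle sets of size $16$ is large, but the filter ``diagonal is a full $16$-cycle'' is extremely restrictive and the surviving list should be small enough to verify retractability element by element. Should the \texttt{YangBaxter} database be insufficient at size $16$, an alternative is to generate the relevant cycle sets directly via their associated permutation braces using the fact (\cref{cor:full_n_cycle_implies_pi_type}) that $\G(X)$ is a $2$-group, together with \cref{thm:oT_divides_dX} which forces $16 \mid d(X)$, thereby restricting the possible brace structures to a manageable collection. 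In either case, the verification is a finite check and concludes the argument.
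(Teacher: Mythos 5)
For $v=3$ your plan coincides with the paper's: the \texttt{YangBaxter} library does contain all cycle sets of size $8$, and filtering for a full-cycle diagonal and checking retractability is exactly what is done there. The problem is $v=4$. There is no classification of cycle sets of size $16$ in the \texttt{YangBaxter} library (the enumeration of involutive non-degenerate solutions stops around size $10$, where the count is already in the millions), and a raw enumeration of all size-$16$ cycle sets is computationally out of reach; you cannot apply the filter ``$T$ is a $16$-cycle'' to a list you cannot generate. Your fallback --- generating candidates via their permutation braces using that $\G(X)$ is a $2$-group and $16 \mid d(X)$ --- is not developed enough to count as a proof: a transitive $2$-subgroup of $\Sym_{16}$ can have order up to $2^{15}$, and the collection of brace structures on such groups is not obviously ``manageable''. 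This is a genuine gap.

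The paper closes it differently. It first uses \cref{lem:center_of_counterexample} and \cref{lem:fix_of_2_type} (both of which are valid already for $v=4$, unlike the later lemmas requiring $v\geq 5$) to pin a hypothetical irretractable counterexample down to a rigid normal form: $X=\{x_i : i\in\Z_{16}\}$ with $T(x_i)=x_{i+1}$, a central involution acting as $x_i\mapsto x_{1-i}$, and $x_i\mapsto x_{i+8}$ an automorphism. These facts are then encoded as six constraints on the $16\times 16$ multiplication table $C(i,j)$, together with irretractability, and a constraint solver (Savile Row) certifies that no such table exists. The structural constraints are what make the search feasible; without them the finite check you propose cannot actually be carried out at size $16$. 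If you want to salvage your approach, you would need to import those two lemmas into the $v=4$ case in the same way before handing the problem to a computer.
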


\begin{proof}
    For $v = 3$, the statement is easily confirmed by checking all cycle sets of size $8$ which are part of the GAP library \texttt{YangBaxter}. For $v=4$, \cref{lem:center_of_counterexample} shows that if there is a counterexample, then there is one with $X = \{ x_i : i \in \Z_{16} \}$, the diagonal $T(x_i) = x_{i+1}$ and $\lambda_z(x_i) = x_{\beta - i}$ for an element $z \in \Zen(\G(X))$ and an odd $\beta$. Note that $\lambda_z(x_{i+\delta}) = x_{(\beta - 2\delta - i) +\delta}$ and $T(x_{i+\delta}) = T(x_{(i+1) + \delta})$, so by a suitable relabelling, we can assume that a counterexample has the diagonal $T(x_i)= x_{i+1}$, for which there is a $z \in \Zen(\G(X))$ with $\lambda_z(x_{1-i})$.

    Taking into account \cref{lem:fix_of_2_type}, we can furthermore assume that $x_i \mapsto x_{i+8}$ is an automorphism of $X$. We therefore model $X$ as a map $C: \Z_{16} \times \Z_{16} \to \Z_{16}$ that describes the operation on $X$ by $x_i \ast x_j = x_{C(i,j)}$. The map $C$ is characterized by the following properties:
    \begin{enumerate}
        \item $\forall i,j,k \in \Z_{16}: (j \neq k) \Rightarrow (C(i,j) \neq C(i,k))$,
        \item $\forall i,j,k \in \Z_{16}: C(C(i,j),C(i,k)) = C(C(j,i),C(j,k))$,
        \item $\forall i \in \Z_{16}: C(i,i) = i+1$,
        \item $\forall i,j \in \Z_{16}: C(i,1-j) = 1 - C(i,j)$,
        \item $\forall i,j \in \Z_{16}: C(i+8,j+8) = C(i,j) + 8$,
        \item $\forall i,j \in \Z_{16}: (i \neq j) \Rightarrow (\exists k \in \Z_{16}: C(i,k) \neq C(j,k))$.
    \end{enumerate}
    Note that the last condition reflects the irretractability of $X$. We used Savile Row 1.10, a modelling assistent for constraint programming \cite{savilerow} to check for the nonexistence of such a cycle set. The written code is provided in the appendix.
\end{proof}

\begin{proof}[Proof of \cref{thm:full_diagonal_implies_retractable_2_type}]

    By \cref{lem:start_the_induction}, the theorem is true for $v = \{3,4 \}$, so suppose that $v \geq 5$ and that the theorem holds for all cycle sets of size $2^{v-1}$. Without restriction, we may assume that the cycle set in question is defined on the set $ X = \{x_i : i \in \Z_{2^v} \}$ with $T(x_i) = x_{i+1}$. By \cref{cor:full_n_cycle_implies_pi_type}, $\G(X)$ is a $2$-group, so there is a $z \in \Zen(\G(X))$ with $o_{\circ}(z) = 2$.
    
    We want to prove that $X$ is retractable. We can already reduce this problem to the case when:
    \begin{enumerate}
        \item There is a unique $z \in \Zen(\G(X))$ with $o_{\circ}(z) = 2$. This element acts on $X$ as $\lambda_z(x_i) = x_{\beta-i}$ for some odd $\beta \in \Z_{2^v}$ (\cref{lem:center_of_counterexample}).
        \item $o_+(z) = 2^{v-1}$ (\cref{lem:counterexample_has_k=v-1}).
        \item For $\varphi = \id - \lambda_z$, the cycle set $X_{\varphi}$ is irretractable (\cref{lem:properties_of_and_reduction_to_xphi}e)) and indecomposable (\cref{lem:ingredients_for_induction_step}a)). Furthermore, $d(X_{\varphi}) =2^{v-1}$ (\cref{lem:properties_of_and_reduction_to_xphi}c)d)),
    \end{enumerate}
    as all other cases have already been dealth with. We can assume without restriction that all of these conditions are satisfied.

    As in the proof of \cref{lem:counterexample_has_k=v-1}, let $z' \in \Zen(\G(X_{\varphi}))$ be such that $o_{\circ}(z) = 2$. By \cref{pro:cabling_out_the_center}, $z \neq z'$. As $X_{\varphi}$ is indecomposable, $\lambda_{z'}$ acts without fixed points (\cref{pro:centralizer_of_transitive_permutation_group_is_semiregular}).

    Let $B = \gen{z}_+$. We claim that $z'$ can be chosen in a way such that $B' = \gen{B \cup \{ z' \}}_{\circ}$ acts transitively on $X$. Suppose first that $B' = \gen{B \cup \{ z' \}}_{\circ}$ does \emph{not} act transitively on $X$ and, recalling that $B$ acts semiregularly on $X$ (\cref{lem:ingredients_for_induction_step}d)), let $X_1,X_2 \subseteq X$ be the orbits of $B$ on $X$. As $B'$ is not transitive on $X$, it is necessary that $\lambda_{z'}(X_i) = X_i$ ($i=1,2$). We conclude that there are $g_1,g_2 \in B$ with $o_{\circ}(g_i)=2$ ($i=1,2$) such that
    \[
    \lambda_{z'} (x) = \begin{cases}
        \lambda_{g_1}(x) & x \in X_1 \\
        \lambda_{g_2}(x) & x \in X_2.
    \end{cases}
    \]
    If $g_1 = g_2$, then actually, $z' = g_1 = g_2 \in B$, and we can conclude as in the proof of \cref{lem:counterexample_has_k=v-1} that $z' \in \Soc(\G(X_{\varphi}))$, thus proving that $X_{\varphi}$ is retractable, implying retractability of $X$ via \cref{lem:properties_of_and_reduction_to_xphi}e).
    
    Otherwise as $\lambda_z(x) \neq \lambda_{z'}(x)$ for all $x \in X$ (\cref{pro:centralizer_of_transitive_permutation_group_is_semiregular}), it is only possible (relabelling, if necessary), that $g_1 = h$, $g_2 = z \circ h$, where $h = 2^{v-2}z$. We can now determine $X_1,X_2$: using \cref{thm:diagonal_commutes_with_conjugated_diagonal} and \cref{lem:properties_of_and_reduction_to_xphi}b), we infer
    \[
        T_{\varphi}^{z'} \circ T_{\varphi} = T_{\varphi} \circ T_{\varphi}^{z'}
        \Rightarrow T^2 \circ (T^2)^{z'} = (T^2)^{z'} \circ T^2.
    \]
    As $(T^2)^{z'}$ centralizes $T^2$, $(T^2)^{z'}$ fixes or exchanges the $2$ orbits of $\gen{T^2}$. Therefore, $((T^2)^{z'})^2 = (T^4)^{z'}$ fixes the orbits of $\gen{T^2}$ and therefore centralizes the respective actions of $\gen{T^2}$ on $X_1$ and $X_2$. Together with the observation that $(T^4)^{z'}$ is a product of $4$ cycles of size $2^{v-2}$, it follows that the cycles of $(T^4)^{z'}$ are, as sets, identical to the cycles of $T^4$, which are given by
    \[
    C_j = \{ x_l : l \in j + 4 \cdot \Z_{2^v}  \} \quad (0 \leq j \leq 3).
    \]
    In particular, $\lambda_{z'}$ permutes these sets. Recall that $\lambda_{z}(x_i) = x_{\beta - i}$ with odd $\beta$, so $\lambda_z(C_j) \neq C_j$ for all $j$. On the other hand, $\lambda_h$ centralizes $T^4$ by \cref{pro:fix_provides_automorphisms}, so $\lambda_h(C_j) = C_j$ for all $j$. From this, we see that both $X_1$ and $X_2$ are unions of some of the $C_j$.

    In particular, it follows that $X_2$ is stable under $T^4$. As $X_2$ is also stable under $\lambda_{z'} = \lambda_{z\circ h}$, it follows that for $x \in X_2$,
    \begin{equation} \label{eq:T4z'}
        (T^4)^{z'}(x)  = ((T^2)^{h\circ z})^2(x)
     = ((T^2)^z)^2(x)
     = (T^{-2})^2(x) = T^{-4}(x),
    \end{equation}
    where the second equality follows from the fact that $\lambda_h$ centralizes $T^2 = T_{\varphi}$ due to $h \in \Fix(\G(X_{\varphi}))$ (\cref{pro:fix_provides_automorphisms}). Putting $\varphi' = \id - \lambda_{z'}$ and $\varphi'\circ \varphi = \Tilde{\varphi}$, \cref{pro:adding_two_cablings} and \cref{pro:diagonal_of_cabling_by_center} imply
    \[
    T_{\Tilde{\varphi}}  := T_{(X_{\varphi})_{\varphi'}} 
         = T_{\varphi} \circ (T_{\varphi}^{z'})^{-1} 
         = T^2 \circ (T^{-2})^{z'}.
    \]
    Using \cref{eq:T4z'}, we deduce for $x \in X_2$ that
    \[
    T_{\Tilde{\varphi}}^2(x) = (T^4 \circ (T^{-4})^{z'})(x) = T^4 (T^4(x)) = T^8(x).
    \]
    As $v \geq 5$, it follows that $o(T_{\Tilde{\varphi}}^2) = 2^{v-3}$ is divisible by $2$ which implies that $o(T_{\Tilde{\varphi}}) = 2^{v-2}$ which is divisible by $4$. In particular, we note that $d((X_{\varphi})_{\varphi'})$ is divisible by $4$.

    By \cref{pro:centralizer_of_transitive_permutation_group_is_semiregular}, $\lambda_{z'\circ z}(x) \neq x$ for any $x\in X$. Using \cref{pro:properties_of_phi_z}b) for $z$ and $z'$, we calculate for any $y \in X$:
    \begin{align*}
    \lambda_{z' \circ z}(x) \cab{\Tilde{\varphi}} y  & = \lambda^{-1}_{\Tilde{\varphi}(\lambda_{\lambda_{z' \circ z}(x)})}(y) \\ &= \lambda^{-1}_{\Tilde{\varphi}(\lambda_{z' \circ z}(\lambda_x))}(y)\\ &= \lambda^{-1}_{\lambda_{z' \circ z}(\Tilde{\varphi}(\lambda_x))}(y)\\ &= \lambda^{-1}_{\lambda_{z'}(-\Tilde{\varphi}(\lambda_x))}(y)\\ &= \lambda^{-1}_{\Tilde{\varphi}(\lambda_x)}(y)\\ &= x \cab{\Tilde{\varphi}} y.
    \end{align*}
    It follows that $(X_{\varphi})_{\varphi'}$ is retractable and, as $d(X_{\varphi})_{\varphi'}$ is divisible by $4$, we can find $z'' \in \Zen(\G(X_{\varphi}))$ with $o_{\circ}(z'') = 2$ and $z'' \neq z'$. If $\gen{B \cup \{ z'' \}}_{\circ}$ still does not act transitively on $X$, then we can argue similarly as above that
    \[
    \lambda_{z''} (x) = \begin{cases}
        \lambda_{z \circ h}(x) & x \in X_1 \\
        \lambda_{h}(x) & x \in X_2.
    \end{cases}
    \]
    However, this is not possible, as then, $z' \circ z'' \in \G(X_{\varphi})$ and $\lambda_{z'\circ z''}(x) = \lambda_z(x)$ for all $x \in X$, despite $z \not\in \G(X_{\varphi})$.

    We can therefore choose $z'$ in a way that $B' = \gen{B \cup \{ z' \}}_{\circ}$ acts transitively on $X$ which is from now on our assumption.

    We claim that $B'$ is, in fact a subbrace of $\G(X)$. First observe that $\Tilde{B} = B + \gen{z'}_+$ is an abelian subbrace of $B$: it follows from \cref{pro:central_involutions_generate_Bk} that $\gen{z'}^{\circ}_+$ is abelian. Furthermore, $z'$ centralizes $B$ and $z$ centralizes $\gen{z'}_+$, so $\gen{z'}_+$ and $B$ centralize one another by \cref{pro:commutativity_of_cyclic_subbraces}. Furthermore, $\Tilde{B}$ is closed under multiplication as
    \[
    az' \circ bz = az' + b\lambda_{az'}(z) = az' + b(z - 2az') \in \Tilde{B}
    \]
    for all $az',bz \in \Tilde{B}$, so $\Tilde{B}$ is an abelian subbrace that contains $B'$, which acts transitively on $X$, so $\Tilde{B} = B'$ because $|B'| > |B| = 2^{v-1}$. It follows that $B'$ is a subbrace of $\G(X)$.
    
    As $(B:B')=2$, it now follows that $2z' \in B = \gen{z}_+$. Thus, we can write $2z' = mz$. Now let $g \in \G(X_{\varphi})$. We calculate
    \begin{align*}
        \lambda_{z'}(2g) & = \lambda_{z'}(\varphi(g)) \qquad (\textnormal{\cref{pro:properties_of_phi_z}a)}) \\ 
        & = \lambda_{z'}(z - \lambda_g(z)) \qquad (\textnormal{\cref{pro:properties_of_phi_z}d)})) \\
        & = \lambda_{z'}(z) - \lambda_g(\lambda_{z'}(z)) \\
        & = (z - 2z') - \lambda_g(z-2z') \qquad (\textnormal{\cref{pro:properties_of_phi_z}c)}) \\
        & = (1-m)z - \lambda_g((1-m)z) \\
        & = (1-m)  (z-\lambda_g(z)) \\
        & = (1-m)  2g.
    \end{align*}
    Putting $m' = 1-m$, it follows that $\lambda_{z'}(g) = m'g$ for $g \in 2\G(X_{\varphi}) = \G(X_{2\varphi})$. As $d(X_{\varphi}) = 2^{v-1}$, we see that $d(X_{2\varphi}) = 2^{v-2}$. As $\lambda_{z'}(g) = m'g$ is an involution on $\G(X_{2\varphi})$, which is of exponent $2^{v-2}$, we conclude that $m'^2 = 1$ in $\Z_{2^{v-2}}$ and therefore, $m' \in \{ \pm 1, 2^{v-3} \pm 1 \}$ (\cite[Lemma 2.1.21]{cohen_nt}).

    Now let $g \in \G(X_{4\varphi}) = 2 \G(X_{2\varphi})$ and write $g = 2g'$ with $g' \in \G(X_{2\varphi})$. Assume furthermore that $m' = 2^{v-3} \pm 1$ (with one choice of sign), then
    \[
    \lambda_{z'}(g) = (2^{v-3} \pm 1) \cdot 2g' = 2^{v-2}g' \pm 2g' = \pm 2g' = \pm g.
    \]
    Therefore, $\lambda_{z'}$ acts on $\G(X_{4\varphi})$ as $\id$ or $-\id$. If $\lambda_{z'}(g) = g$ for all $g \in \G(X_{4\varphi})$, then for any $x,y \in X$, we have
    \[
        \lambda_{z'}(x) \cab{4\varphi} y  = \lambda^{-1}_{4\varphi(\lambda_{\lambda_{z'}(x)})}(y) 
         = \lambda^{-1}_{4\varphi(\lambda_{z'}(\lambda_x))}(y) 
         = \lambda^{-1}_{\lambda_{z'}(4\varphi(\lambda_x))}(y) 
         = \lambda^{-1}_{4\varphi(\lambda_x)}(y) 
         = x \cab{4\varphi} y.
    \]
    As $x \neq \lambda_{z'}(x)$, it follows that $X_{4\varphi}$ is retractable. Furthermore, as $v \geq 5$, the class $d(X_{4\varphi}) = \exp_+(4\G(X_{\varphi})) = 2^{v-3}$ is divisible by $4$, so \cref{pro:replacement_of_center} implies the existence of an element $z'' \in \Zen(\G(X))$ with $o_{\circ}(z'')=2$ and $z''\neq z$, in contradiction to \cref{lem:center_of_counterexample}.

    On the other hand, if $\lambda_{z'}(g) = -g$ for $g \in \G(X_{4\varphi})$, then for $g \in \G(X_{4\varphi})$ an application of \cref{pro:properties_of_phi_z}a) shows $\lambda_{z'\circ z}(g) = \lambda_{z'} (-g) = g$. Now the same argument as above shows $\lambda_{z'\circ z}(x) \cab{4\varphi} y = x \cab{4\varphi} y$ for all $x,y \in X$. As $\lambda_{z'\circ z}(x) \neq x$ for $x \in X$ (\cref{pro:centralizer_of_transitive_permutation_group_is_semiregular}), it follows again that $X_{4\varphi}$ is retractable, providing the same contradiction as before.

    We have therefore shown that $X$ can not be irretractable, thus proving the theorem.
\end{proof}

\section{Some problems}

We close this work by posing some open problems.

\begin{problem}
    Develop methods to gain control over endocablings that do \emph{not} come from the subalgebra of $\Endl(\G(X))$ that is given by $\gen{\lambda_z : z \in \Zen(\G(X))}$.
\end{problem}

A disadvantage of our endocabling method is its very limited applicability. By \cref{pro:adding_two_cablings} and \cref{pro:diagonal_of_cabling_by_center}, we have very good control over cablings of the form
\[
\varphi_r = \sum_{z \in \Zen(\G(X))} k_z \lambda_z,
\]
where 
\[
r =  \sum_{z \in \Zen(\G(X))}k_ze_z \in \Z[\Zen(\G(X))],
\]
in particular with respect to the behaviour of the diagonal, which can be computed as
\[
T_{\varphi} = \prod_{z \in \Zen(\G(X))} (T^z)^{k_z}.
\]
From this, it can be shown that the image of the homomorphism
\[
\mathcal{T}: \Z[\Zen(\G(X))] \to \Sym_X; \ r \mapsto T_{\varphi_r}
\]
becomes a module over $\Z[\Zen(\G(X))]$, generated by $T_{\id} = T$, and even inherits a commutative ring structure from $\Z[\Zen(\G(X))]$

However, with any element
\[
r = \sum_{g \in \G(X)}k_ge_g \in \Zen(\Z[\G(X)]
\]
also comes a canonical $\lambda$-en\-do\-mor\-phism given by
\[
\varphi_r = \sum_{g \in \G(X)} k_g \lambda_g
\]
that we do not have any control over (yet), except for the fact that $T_{\varphi_{r+s}} = T_{\varphi_r} \circ T_{\varphi_s}$ for $r,s \in \Zen(\Z[\G(X)]$. This limits applications of endocabling to situations where central elements are a priori available, for example when $\G(X)$ is nilpotent. In other cases, the endocabling method restricts to the ``classical'' cabling method of Lebed, Ram\'{i}rez and Vendramin, as long as there is no further knowledge about other endocablings than those coming from $\Zen(\G(X))$.

\begin{problem}
    Is there a generalization of \cref{thm:full_diagonal_implies_finite_mpl} or \cref{thm:full_diagonal_implies_retractable_2_type} to general integers $n$?
\end{problem}

The methods applied in this work heavily rely on the result that actions of $p$-groups on $p$-groups have non-trivial fixed points, in particular the non-triviality of $\Fix(\G(X))$ if $X$ is of $p$-type, together with the non-triviality of $\Zen(\G(X))$. This leaves open the question if the retractability theorems proven for cycle sets of size $n$ where $T$ is an $n$-cycle can be generalized to cases where $n$ is not a prime power. By the results of Cedó and Okniński \cite{CO_SquarefreeIndecomposable}, an indecomposable cycle set of squarefree size is always retractable. Furthermore, as the case of even $n$ is potentially pathological, due to the potential presence of the exceptional irretractable cycle set $X_{4,19}$ as a (direct) factor, a reasonable first step would be to determine if the theorems generalize to \emph{odd} cardinalities of the form $n = p^2q$. As there is no computational evidence for truth or falsehood in such a case - the smallest relevant cardinality is $n = 45$ - we avoid stating this question as a conjecture.

\section*{Acknowledgements}

The author of this article wants to express his gratitude to Marco Castelli, Edouard Feingesicht and Senne Trappeniers for plenty of helpful discussions and constructive criticism.

\bibliography{references}
\bibliographystyle{abbrv}

\section*{Appendix: Savile Row code\texorpdfstring{ for \cref{lem:start_the_induction}}{}}

\begin{verbatim}
language ESSENCE' 1.0

find C : matrix indexed by [int(0..15),int(0..15)] of int(0..15)

such that

$ Axiom (C1)
forAll i: int(0..15). allDiff([C[i,j] | j: int(0..15)]),
$ Axiom (C3)
forAll i,j,k: int(0..15). C[C[i,j],C[i,k]] = C[C[j,i],C[j,k]],
$ Diagonal condition
forAll i: int(0..15). C[i,i] = ((i+1)%16),
$ x_i -> x_1-i centralizes G(X) 
forAll i,j : int(0..15). C[i,(1-j)%16] = (1-C[i,j])%16,
$ x_i -> x_i+8 is an automorphism
forAll i,j: int(0..15). C[(i+8)%16,(j+8)%16] = (C[i,j]+8)%16,
$ Irretractability
forAll i: int(0..n-2). forAll j: int(i+1..15).
exists k: int(0..15). C[i,k] != C[j,k]
\end{verbatim}

\end{document}